\documentclass[12pt,draft]{article}

\usepackage{amsmath}
\usepackage{amssymb}
\usepackage{amsthm}
\usepackage{geometry}
\usepackage{titlesec}
\usepackage[numbers,sort&compress]{natbib}
\usepackage{tabularx}
\usepackage{booktabs}

\linespread{1.2}
\numberwithin{equation}{section}
\newtheorem{theorem}{Theorem}[section]
\newtheorem{lemma}[theorem]{Lemma}
\newtheorem{proposition}[theorem]{Proposition}
\newtheorem{corollary}[theorem]{Corollary}
\theoremstyle{definition}
\newtheorem{definition}[theorem]{Definition}
\newtheorem{remark}[theorem]{Remark}
\theoremstyle{plain}
\newtheorem{example}[theorem]{Example}
\geometry{letterpaper, left=1in, right=1in, top=1in, bottom=0.75in}
\titleformat*{\section}{\large\bfseries}
\titleformat*{\subsection}{\normalsize\bfseries}

\newcommand{\bC}{\ensuremath{\mathbb{C}}}
\newcommand{\bCx}{\ensuremath{\mathbb{C}^*}}
\newcommand{\bZ}{\ensuremath{\mathbb{Z}}}
\newcommand{\subno}[1]{\noindent\textnormal{(#1)}}
\newcommand{\afCase}[1]{\textbf{Case~#1:}}

\newcommand{\bZz}{\ensuremath{\mathbb{Z}_2}}
\newcommand{\ep}{\ensuremath{{\bar{0}}}}
\newcommand{\op}{\ensuremath{{\bar{1}}}}
\newcommand{\cp}{\ensuremath{\mathbb{C}[\partial]}}
\newcommand{\hv}{\ensuremath{\mathcal{HV}}}
\newcommand{\hvs}{\ensuremath{\mathcal{HVS}}}
\newcommand{\hvsi}{\ensuremath{\mathcal{HVS}(\alpha)}}
\newcommand{\hvsii}{\ensuremath{\mathcal{HVS}(\beta,\gamma,\tau)}}
\newcommand{\Chom}{\mathrm{Chom }}
\newcommand{\Cend}{\mathrm{Cend }}
\newcommand{\CDer}{\mathrm{CDer }}
\newcommand{\CInn}{\mathrm{CInn }}
\newcommand{\ad}{\mathrm{ad\ }}
\newcommand{\dl}{\ensuremath{d_\lambda }}
\newcommand{\Aut}{\mathrm{Aut }}
\newcommand{\BDer}{\mathrm{BDer }}
\newcommand{\BInn}{\mathrm{BInn }}
\newcommand{\bp}{\ensuremath{\bar{\partial}}}
\newcommand{\bl}{\ensuremath{\bar{\lambda}}}

\begin{document}

\begin{center}
  \bfseries\large
  Two classes of Lie conformal superalgebras related to the Heisenberg--Virasoro Lie conformal algebra
  \footnote{\indent \ \ Corresponding author: Xiaoqing Yue (xiaoqingyue@tongji.edu.cn)}

    \mdseries\normalsize
    \bigskip
    Jinrong Wang, Xiaoqing Yue

    \footnotesize
    \smallskip
    School of Mathematical Sciences, Tongji University, Shanghai 200092, China

    \smallskip
    E-mails: 2130926@tongji.edu.cn, xiaoqingyue@tongji.edu.cn
\end{center}
{
\footnotesize
    \noindent\textbf{Abstract}.
    In this paper, firstly we construct two classes of Lie conformal superalgebras denoted by $\hvsi$ and $\hvsii$, respectively, where $\alpha$ is an nonzero complex number and $\beta,\gamma,\tau$ are complex numbers. They are both of rank $(2+1)$ and the even part $\hvsi_\ep=\hvsii_\ep$ is the Heisenberg--Virasoro Lie conformal algebra, which is a free $\bC[\partial]$-module generated by $L$ and $H$ satisfying $[L_\lambda L]=(\partial+2\lambda)L,\ [L_\lambda H]=(\partial+\lambda)H,\ [H_\lambda H]=0$. Then we completely determine the conformal derivations, conformal biderivations, automorphism groups and conformal modules of rank $(1+1)$ for these two Lie conformal superalgebras.

    \smallskip
    \noindent\textit{Keywords}:
    Lie conformal superalgebra, conformal derivation, conformal biderivation, automorphism group, conformal module

    \smallskip
    \noindent\textit{Mathematics Subject Classification (2020)}: 17B10, 17B40, 17B65, 17B68
}
\section{Introduction}\label{sect:intro}
\hspace{1.5em}Lie conformal superalgebras, introduced by Kac in \cite{K98}, encode the singular part of the operator product expansion of chiral fields in two-dimensional quantum field theory. And they play a significant role in the representation theory of infinite-dimensional Lie algebras \cite{K99}. As a generalization of Lie conformal algebras, the study of Lie conformal superalgebras has received a lot of attention in recent years. The classification of finite simple Lie conformal superalgebras and their representation theory have been discussed in \cite{BKL10,BKL13,BKLR,FK,CK}. The structure theory of finite Lie conformal superalgebras was studied in \cite{FKR}. Semidirect products, cohomology and generalized derivations of a Lie conformal superalgebra were discussed in \cite{ZCY}. The $\cp$-split extending structures problem of Lie conformal algebras was generalized to the super case in \cite{ZCY19}.

However, the structure theory and representation theory for non-simple Lie conformal superalgebras is in general more difficult. Even for the non-super cases, there is still a lot of work to be done. The simplest nontrivial Lie conformal algebra is the Virasoro Lie conformal algebra $\mathrm{Vir}=\bC[\partial]L$, where $L$ is the $\bC[\partial]$-generator with the relation $[L_\lambda L]=(\partial+2\lambda)L$. Since a simple Lie conformal algebra of rank one is isomorphic to the Virasoro Lie  conformal algebra $\mathrm{Vir}$ \cite{DK}, it is very natural to consider the problem for new non-simple Lie conformal (super)algebras containing $\mathrm{Vir}$.
The Heisenberg--Virasoro Lie conformal algebra $\hv$ is of rank 2 and has a free $\bC[\partial]$-basis $\{L,H\}$ such that:
    \begin{align}\label{HV}
        [L_\lambda L]=(\partial+2\lambda)L,\ [L_\lambda H]=(\partial+\lambda)H,\ [H_\lambda H]=0,
    \end{align}
which is an important Lie conformal algebra containing $\mathrm{Vir}$ as its subalgebra.
It was firstly introduced in \cite{SY} as a Lie conformal algebra associated with the twisted Heisenberg--Virasoro Lie algebra. We can see that $\hv$ has a nontrivial abelian conformal ideal with one free generator $H$ as a $\cp$-module. Thus it is neither simple nor semi-simple. Its conformal derivations, free nontrivial rank one conformal modules and universal central extensions were determined in \cite{SY}. The cohomology of the Heisenberg--Virasoro Lie conformal algebra with coefficients in its modules was computed in \cite{YW}. 

For the close relation between Lie conformal superalgebra and infinite-dimensional Lie superalgebras \cite{CK}, it is necessary to generalize the Heisenberg--Virasoro Lie conformal algebra to the super case. In \cite{CDH}, the super Heisenberg--Virasoro algebra was defined by using a class of Heisenberg--Virasoro Lie conformal modules. Representations of super deformation of Heisenberg--Virasoro type Lie conformal algebras were studied in \cite{WWX}. As a special case of infinite Lie conformal superalgebras of Block type, the finite irreducible conformal modules of Heisenberg--Neveu--Schwarz conformal algebra were classified in \cite{X19}. We hope that our work here will shed some light on the development of the structure theory and representation theory of Lie conformal superalgebras. Moreover, we believe that our results are also beneficial for understanding the Heisenberg type Lie superalgebras.

Motivated by the aforementioned facts, in this paper, we study the Lie conformal superalgebras $R=R_\ep\oplus R_\op$, where $R_\ep=\hv$ and $R_\op$ is of rank 1, and give the classification of them completely. Based on this, we construct two Lie conformal superalgebras $\hvsi$ and $\hvsii$.

Throughout this paper, we use notations $\bC$, $\bCx$ and $\bZ$ to represent the set of complex numbers, nonzero complex numbers and integers, respectively. In addition, all vector spaces and tensor products are over $\bC$. For any vector space $V$, we use $V[\lambda]$ to denote the set of polynomials of $\lambda$ with coefficients in $V$.
\begin{definition}\label{hsvi.def}
    The Lie conformal superalgebra $\hvsi=\cp L\oplus\cp H\oplus\cp G$ with one parameter $\alpha\in\bCx$ is a free \bZz-graded \cp-module, satisfying the following relations:
    \begin{align*}
        [L_\lambda L]=(\partial+2\lambda)L,\ [L_\lambda H]=&(\partial+\lambda)H,\ [L_\lambda G]=(\partial+\lambda)G,\\
        [G_\lambda G]=\alpha H,&\ [H_\lambda H]=[H_\lambda G]=0,
    \end{align*}
    where $\hvsi_\ep=\cp L\oplus\cp H$ and $\hvsi_\op=\cp G$.
\end{definition}

\begin{definition}\label{hsvii.def}
    The Lie conformal superalgebra $\hvsii=\cp L\oplus\cp H\oplus\cp E$ with three parameters $\beta, \gamma, \tau\in\bC$ is a free \bZz-graded \cp-module, satisfying the following relations:
    \begin{align*}
        [L_\lambda L]=(\partial+2\lambda)L,\ [L_\lambda H]=&(\partial+\lambda)H,\ [L_\lambda E]=(\partial+\beta\lambda+\gamma)E,\\
        [H_\lambda E]=\tau E,&\ [H_\lambda H]=[E_\lambda E]=0,
    \end{align*}
    where $\hvsii_\ep=\cp L\oplus\cp H$ and $\hvsii_\op=\cp E$.
\end{definition}
 Then, their conformal derivations and conformal biderivations are determined. We find that $\hvsi$ contains non-inner conformal derivations $\{\dl\in\CDer(\hvs)_\ep\ |\ \dl L=cH,\ \dl H=\dl G=0,\ \forall\ c\in\bCx\}$. If $\tau=0$, then $\hvsii$ contains non-inner conformal derivations $\{\dl\in\CDer\big(\hvsii\big)_\ep\ |\ \dl L=cH,\ \dl H=\dl E=0,\ \forall\ c\in\bCx\}$. If $\beta=0$ and $\tau=1$ at the same time, then $\hvsii$ contains non-inner odd conformal derivations $\{\dl\in\CDer\big(\hvsii\big)_\op\ |\ \dl L=cE,\ \dl H=\dl E=0,\ \forall\ c\in\bCx\}$. The conformal biderivations on $\hvsi$ are all inner conformal biderivations, and $\hvsii$ exists non-inner conformal biderivations if and only if $\beta=2,\gamma=0,\tau=0$. As a byproduct, we also determine the conformal derivations and conformal biderivations on the two important Lie conformal superalgebras (e.g. Neveu--Schwarz Lie conformal superalgebra).

The automorphism groups of them are also determined. For the Lie conformal superalgebra $\hvsi$, we have $\Aut(\hvsi)\cong\bCx\ltimes(\bC\times\bC)$. For $\hvsii$, if $\tau=0$, then $\Aut\big(\hvsii\big)\cong(\bCx\times\bCx)\ltimes(\bC\times\bC)$. Otherwise, $\Aut\big(\hvsii\big)\cong\bCx$.

Finally, we classify the free conformal modules of rank $(1+1)$ over these two algebras respectively. The rank $(1+1)$ free conformal modules over $\hvsi$ or $\hvsii$ with $\tau=0$ either have the structure of finite nontrivial irreducible conformal module over the Neveu--Schwarz Lie conformal superalgebra (cf.\cite{CK}), or have the structure of a class conformal modules constructed in \cite{WWX}. Additionally, $\hvsii$ with $\tau\neq 0$ has no non-trivial free conformal modules of rank $(1+1)$. 

The rest of this paper is organized as follows. In Section~\ref{sect:prelim}, we recall some basic definitions, notations and related results about Lie conformal superalgebras. In Section~\ref{sec:Lie conformal salg}, we firstly classify the Lie conformal superalgebras whose the even part is the Heisenberg--Virasoro Lie conformal algebra and the odd part is of rank one. Then, two classes of Lie conformal superalgebras are constructed. In Section~\ref{sect:conformal derivation} and \ref{sect:conformal biderivation}, their conformal derivations and conformal biderivations are discussed, respectively. In Section~\ref{sect:automor}, we determine the automorphism groups of these two classes Lie conformal superalgebras. Furthermore in the last section, we study the conformal modules of rank $(1+1)$ over $\hvs$ and $\hvsii$, respectively. Our main results are summarized in Theorems~\ref{hvs.d}, \ref{hvsii.d}, \ref{thm:biderivation}, \ref{thm:automor}, \ref{thm:conmod.hvs} and \ref{thm:conmod.hvsii}.

\section{Preliminaries}\label{sect:prelim}
\hspace{1.5em}In this section, we will introduce some basic definitions, notations and related results about Lie conformal superalgebras.

Let $V$ be a superspace that is a \bZz-graded linear space with a direct sum $V=V_\ep \oplus V_\op$. If $x\in V_\theta$, $\theta\in\bZz=\{\ep,\op\}$, then we say $x$ is \textit{homogeneous} and of \textit{parity $\theta$}. The parity of a homogeneous element $x$ is denoted by $|x|$. Throughout what follows, if $|x|$ occurs in an expression, then it is assumed that $x$ is homogeneous and that the expression extends to the other elements by linearity.

\begin{definition}
    A \textit{Lie conformal superalgebra} $R=R_\ep \oplus R_\op$ is a \bZz-graded $\bC[\partial]$-module with a $\bC$-linear map $R\otimes R\rightarrow \bC[\lambda]\otimes R$, $a\otimes b\mapsto [a_\lambda b]$ called $\lambda$-bracket, and satisfying the following axioms $(a,b,c\in R)$:
    \begin{align}
        \text{(conformal sesquilinearity)}&\phantom{1234} [\partial a_\lambda b]=-\lambda[a_\lambda b],\ [a_\lambda \partial b]=(\partial+\lambda)[a_\lambda b],\label{conformal-sesqui}\\
        \text{(skew-symmetry)}&\phantom{1234} [a_\lambda b]=-(-1)^{|a||b|}[b_{-\lambda-\partial}a],\label{skew-symm}\\
        \text{(Jacobi
        	identity)}&\phantom{1234} [a_\lambda[b_\mu c]]=[[a_\lambda b]_{\lambda+\mu}c]+(-1)^{|a||b|}[b_\mu[a_\lambda c]].
    \end{align}
\end{definition}

For a Lie conformal superalgebra $R=R_\ep\oplus R_\op$ and for any $\theta,\xi\in\{\ep,\op\}$, $R_\theta R_\xi\subseteq R_{\theta+\xi}$. $R$ is called \textit{finite} if it is finitely generated over $\bC[\partial]$, otherwise it is called \emph{infinite}. For a finite Lie conformal superalgebra $R$, if it is further a free $\bC[\partial]$-module, then the \emph{rank} of $R$ is defined as the rank of the underlying free $\bC[\partial]$-module. Sometimes the rank is written as the sum of two numbers which means the sum of the rank of $R_\ep$ and the rank of $R_\op$.

\begin{example}\textup{(\cite{CK}) (Neveu--Schwarz Lie conformal superalgebra)} Let $\mathcal{NS}=\bC[\partial]L\oplus \bC[\partial]G$ be a free \bZz-graded $\bC[\partial]$-module. Define
\begin{align*}
    [L_\lambda L]=(\partial+2\lambda)L,\ [L_\lambda G]=(\partial+\frac32\lambda)G,\ [G_\lambda G]=2L,
\end{align*}
    where $\mathcal{NS}_\ep=\bC[\partial]L$ and $\mathcal{NS}_\op=\bC[\partial]G$. Then $\mathcal{NS}$ is a Lie conformal superalgebra of rank $(1+1)$. We call it the Neveu--Schwarz Lie conformal superalgebra.
\end{example}

The Lie conformal superalgebras of rank $(1+1)$ have been classified completely:
\begin{proposition}\label{rank1.1}\textup{(\cite{ZCY})}
Let $R=\cp x\oplus\cp y$ be a Lie conformal superalgebra that is a free \bZz-graded \cp-module, where $R_\ep=\cp x$ and $R_\op=\cp y$. Then $R$ is isomorphic to one of the following Lie conformal superalgebras\textup{:}

\subno{1} $R_1$\textup{:} $[x_\lambda x]=0$, $[x_\lambda y]=0$, $[y_\lambda y]=p(\partial)x$, $\forall\ p(\partial)\in\cp$,

\subno{2} $R_2$\textup{:} $[x_\lambda x]=0$, $[y_\lambda y]=0$, $[x_\lambda y]=q(\partial)y$, $\forall\ q(\partial)\in\cp$,

\subno{3} $R_3$\textup{:} $[x_\lambda x]=(\partial+2\lambda)x$, $[x_\lambda y]=0$, $[y_\lambda y]=0$,

\subno{4} $R_4$\textup{:} $[x_\lambda x]=(\partial+2\lambda)x$, $[x_\lambda y]=(\partial+\beta\lambda+\gamma)y$, $[y_\lambda y]=0$, $\forall\ \beta,\gamma\in\bC$,

\subno{5} $R_5$\textup{:} $[x_\lambda x]=(\partial+2\lambda)x$, $[x_\lambda y]=(\partial+\frac32\lambda)y$, $[y_\lambda y]=\alpha x$, $\forall\ \alpha\in\bCx$.
\end{proposition}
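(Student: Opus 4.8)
The plan is to pin down the three structure polynomials of the $\lambda$-bracket and then normalize the generators. Since $R_\ep=\cp x$ and $R_\op=\cp y$ are free $\cp$-modules of rank one, the $\bZz$-grading (together with $R_\theta R_\xi\subseteq R_{\theta+\xi}$) forces
\[
  [x_\lambda x]=f(\lambda,\partial)x,\qquad [x_\lambda y]=g(\lambda,\partial)y,\qquad [y_\lambda y]=h(\lambda,\partial)x
\]
for some $f,g,h\in\bC[\lambda,\partial]$; by conformal sesquilinearity \eqref{conformal-sesqui} these three data, together with \eqref{skew-symm} (which merely records $[y_\lambda x]$ in terms of $g$), determine the entire $\lambda$-bracket of $R$. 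So the task is to find all triples $(f,g,h)$ for which \eqref{skew-symm} and the Jacobi identity hold, and then to read off the isomorphism class.

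First I would extract the skew-symmetry constraints. Applying \eqref{skew-symm} to $[x_\lambda x]$ gives $f(\lambda,\partial)=-f(-\lambda-\partial,\partial)$; substituting $\lambda=-\partial/2$ shows $(2\lambda+\partial)\mid f$. Applying it to $[y_\lambda y]$, where the sign $-(-1)^{|y||y|}$ equals $+1$, gives the symmetry $h(\lambda,\partial)=h(-\lambda-\partial,\partial)$. The Jacobi identity on the triple $(x,x,x)$ just says that $R_\ep$, with $\lambda$-bracket $[x_\lambda x]=f(\lambda,\partial)x$, is a rank-one Lie conformal algebra; combined with the divisibility above and the classification of rank-one Lie conformal algebras (see \cite{DK}), this yields, after rescaling $x$ if necessary, either $f=0$ (abelian even part) or $f=\partial+2\lambda$ (so $R_\ep\cong\mathrm{Vir}$). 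I would then split into these two cases.

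In each case the remaining Jacobi identities, on $(x,x,y)$, $(x,y,y)$ and $(y,y,y)$, become explicit polynomial identities in $\lambda,\mu,\partial$. The $(x,x,y)$ identity says exactly that $g$ defines a conformal $R_\ep$-module structure on $\cp y$; solving it pins $g$ down to the shape occurring in $R_2$ when $R_\ep$ is abelian, and to $g=0$ or the shape occurring in $R_4$ when $R_\ep\cong\mathrm{Vir}$ (the free rank-one Virasoro conformal modules). The $(x,y,y)$ identity, which reads
\[
  h(\mu,\partial+\lambda)\,f(\lambda,\partial)=g(\lambda,-\lambda-\mu)\,h(\lambda+\mu,\partial)+g(\lambda,\partial+\mu)\,h(\mu,\partial),
\]
together with the $(y,y,y)$ identity, then determines $h$: if $f=0$ and $g=0$, nothing further is imposed on $h$ beyond its skew-symmetry, giving $R_1$; if $f=0$ and $g\neq0$, one gets $h=0$, giving $R_2$; if $f=\partial+2\lambda$ and $g=0$, then $(\partial+2\lambda)h(\mu,\partial+\lambda)=0$ forces $h=0$, giving $R_3$; and if $f=\partial+2\lambda$ and $g=\partial+\beta\lambda+\gamma$, a degree count in the displayed identity forces $h$ to be a scalar $\alpha$, with $\alpha=0$ giving $R_4$ and, when $\alpha\neq0$, the same identity evaluated at $h\equiv\alpha$ forcing $\beta=\tfrac32$ and $\gamma=0$ (so $R_5$, of Neveu--Schwarz type). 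Conversely one checks directly that each of $R_1,\dots,R_5$ satisfies all the axioms, completing the classification.

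I expect the main obstacle to be the case $f=\partial+2\lambda$, $g=\partial+\beta\lambda+\gamma$: deducing from the polynomial identity $(\partial+2\lambda)h(\mu,\partial+\lambda)=\bigl((\beta-1)\lambda-\mu+\gamma\bigr)h(\lambda+\mu,\partial)+(\partial+\mu+\beta\lambda+\gamma)h(\mu,\partial)$, together with $h(\mu,\partial)=h(-\mu-\partial,\partial)$, that $h$ must be constant, and then that a nonzero constant is compatible only with $(\beta,\gamma)=(\tfrac32,0)$, requires a careful simultaneous bookkeeping of degrees and leading coefficients in $\partial$, $\lambda$ and $\mu$. The abelian case, and the verification that the five families are genuinely Lie conformal superalgebras, are comparatively routine.
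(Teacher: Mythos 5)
The paper itself gives no proof of Proposition~\ref{rank1.1} (it is quoted from \cite{ZCY}); the closest in-paper comparison is the proof of Proposition~\ref{classification}, and your plan --- unknown polynomial structure coefficients, skew-symmetry, the Jacobi identities, coefficient comparison, then normalization of generators --- is exactly that technique, and your Virasoro-even-part branch ($g=0$ forcing $h=0$, hence $R_3$; $g=\partial+\beta\lambda+\gamma$ with $h=0$ giving $R_4$) is sound. The genuine gap is in the abelian branch, where your identifications with $R_1$ and $R_2$ do not follow from the identities you write down and in part contradict them. With $f=0$ the $(x,x,y)$ identity is $g(\mu,\partial+\lambda)g(\lambda,\partial)=g(\lambda,\partial+\mu)g(\mu,\partial)$, and comparing top $\partial$-coefficients (as the paper does for the $[H_\lambda[H_\mu Y]]$ identity) forces $g$ to be independent of $\partial$, i.e. $[x_\lambda y]=q(\lambda)y$; it does not produce the listed shape $q(\partial)y$, and a nonconstant $q(\partial)$ actually violates this very identity, so your closing step (``one checks directly that each of $R_1,\dots,R_5$ satisfies all the axioms'') fails for $R_2$ read literally --- you must either correct the identification or explicitly reconcile the $\lambda$/$\partial$ roles in the statement. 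Likewise, when $f=g=0$ the only constraint on $h$ is invariance under $\lambda\mapsto-\lambda-\partial$, which permits genuine $\lambda$-dependence (e.g. $h=\lambda(\lambda+\partial)$ is invariant); since any isomorphism commutes with $\partial$ and preserves the grading, it can only rescale $x$ and $y$ by nonzero constants and hence only rescales $h$, so such solutions cannot be normalized to $p(\partial)x$. Thus ``nothing further is imposed on $h$ beyond its skew-symmetry, giving $R_1$'' is a non sequitur: you need an argument eliminating (or explicitly listing) the $\lambda$-dependent symmetric solutions.

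Separately, the case you yourself flag as the crux ($f=\partial+2\lambda$, $g=\partial+\beta\lambda+\gamma$, $h\neq0$) is only promised, not carried out. Part of it is quick: putting $\lambda=0$ in your displayed $(x,y,y)$ identity gives $2\gamma\,h(\mu,\partial)=0$, hence $\gamma=0$ once $h\neq0$. But showing that $h$ must be a constant and that then $\beta=\tfrac{3}{2}$ is precisely where the work lies: a single top-$\partial$-coefficient comparison does not suffice (it leaves open, e.g., $\lambda$-independent top coefficients with $\beta=\tfrac{3}{2}+\tfrac{k}{2}$), and one must combine the lower coefficients with the symmetry $h(\mu,\partial)=h(-\mu-\partial,\partial)$, in the spirit of the computations surrounding \eqref{LLy.1}--\eqref{LLy.2}. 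Until that bookkeeping is written out and the abelian branch is repaired, the proposal is a reasonable outline of the standard method but not yet a proof of the statement as given.
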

Obviously, the Neveu--Schwarz Lie conformal superalgebra is the special case of $R_5$.
\begin{definition}
    Let $V,W$ be two \bZz-graded \cp-modules, respectively. A \bC-linear map
    \begin{align*}
        \varphi: V\rightarrow\bC[\lambda]\otimes W
    \end{align*}
    is called a \textit{conformal linear map of degree $\theta$} if the following holds:
    \begin{align*}
        \varphi_\lambda(\partial v)=(\partial+\lambda)(\varphi_\lambda v)\text{ and }\varphi_\lambda(V_\xi)\subseteq W_{\theta+\xi},
    \end{align*}
    where $\theta,\xi\in\bZz$, for any $v\in V$.
\end{definition}
The set of conformal linear maps of degree $\theta$ from $V$ to $W$ is denoted by $\Chom(V,W)_\theta$. Then $\Chom(V,W)=\Chom(V,W)_\ep+\Chom(V,W)_\op$ is a \bZz-graded \cp-module via
    \begin{align*}
        (\partial\varphi)_\lambda v=-\lambda\varphi_\lambda v,\text{ for any }v\in V.
    \end{align*}
    In particular, $\Chom(V,V)$ are denoted by $\Cend(V)$.
\begin{definition}
    A \textit{conformal module} $M=M_\ep+M_\op$ over a Lie conformal superalgebra $R$ is a \bZz-graded \cp-module equipped with a \bC-linear map $R\otimes M\rightarrow\bC[\lambda]\otimes M$, $a\otimes v\mapsto a_\lambda v$ called $\lambda$-action, and satisfying the following relations $(a,b\in R, v\in M)$:
    \begin{align}
        (\partial a)_\lambda v=-\lambda a_\lambda v,\ a_\lambda(\partial v)=(\partial+\lambda)a_\lambda v,\label{conmod.1}\\
        [a_\lambda b]_{\lambda+\mu}v=a_\lambda(b_\mu v)-(-1)^{|a||b|}b_\mu(a_\lambda v).\label{conmod.2}
    \end{align}
\end{definition}
    Let $R$ be a Lie conformal superalgebra. A conformal $R$-module $M=M_\ep+M_\op$ is \textit{finite} if it is finitely generated as a \cp-module. We say that $M$ has \textit{rank} $(m+n)$, if, as \cp-modules, $M_\ep$ has rank $m$ and $M_\op$ has rank $n$. $M$ is called to be \textit{irreducible}, if $M$ has no nontrivial proper submodule.

    Obviously, for a fixed complex number $\eta$, the \cp-module $\bC c_\eta$ with $\partial c_\eta=\eta c_\eta$, $R_\lambda c_\eta=0$ is a conformal $R$-module. We refer to $\bC c_\eta$ as the \textit{even} (resp., \textit{odd}) \textit{one-dimensional trivial module} if $|c_\eta|=0$ (resp., $|c_\eta|=1$), denoted by $T_\eta^{ev}$ (resp., $T_\eta^{odd}$).

    \begin{proposition}\label{vir.mod}\textup{(\cite{CK})}
For the Virasoro Lie conformal algebra $\mathrm{Vir}$, all the free nontrivial $\mathrm{Vir}$-modules of rank 1 over $\bC[\partial]$ are the following ones $(\Delta,a\in\bC)$\textup{:}
\begin{align*}
    M_{\Delta,a}=\bC[\partial]v,\ L_\lambda v=(\partial+a+\Delta\lambda)v.
\end{align*}
The module $M_{\Delta,a}$ is irreducible if and only if $\Delta\neq 0$. The module $M_{0,a}$ contains a unique nontrivial submodule $(\partial+a)M_{0,\alpha}$ isomorphic to $M_{1,a}$. Moreover, the modules $M_{\Delta,a}$ with $\Delta\neq 0$ exhaust all finite irreducible nontrivial $\mathrm{Vir}$-modules.
\end{proposition}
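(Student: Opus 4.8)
The plan is to split the argument into four parts, the first three essentially computational and the last carrying the genuine difficulty. For the first part, that each $M_{\Delta,a}$ really is a conformal module: on $M_{\Delta,a}=\bC[\partial]v$ the $\lambda$-action is forced by $\bC[\partial]$-linearity and \eqref{conmod.1} to be $L_\lambda(p(\partial)v)=p(\partial+\lambda)(\partial+a+\Delta\lambda)v$, so \eqref{conmod.1} holds automatically and only \eqref{conmod.2} with $a=b=L$ must be checked. With $[L_\lambda L]=(\partial+2\lambda)L$ its left-hand side is $(\lambda-\mu)(\partial+a+\Delta(\lambda+\mu))v$, and expanding $L_\lambda(L_\mu v)-L_\mu(L_\lambda v)$ gives the same polynomial in $\partial,\lambda,\mu$; I would simply carry out this short calculation.

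For the second part, classifying the free nontrivial rank-one modules: given $M=\bC[\partial]v$, write $L_\lambda v=f(\partial,\lambda)v$ with $0\neq f\in\bC[\partial,\lambda]$; then \eqref{conmod.2} is equivalent to the functional equation
\[
(\lambda-\mu)\,f(\partial,\lambda+\mu)=f(\partial+\lambda,\mu)\,f(\partial,\lambda)-f(\partial+\mu,\lambda)\,f(\partial,\mu).
\]
Setting $\mu=0$ and cancelling $f$ forces $f(\partial+\lambda,0)-f(\partial,0)=\lambda$, hence $f(\partial,0)=\partial+a$ for some $a\in\bC$; comparing the remaining coefficients of powers of $\lambda$ and $\mu$ — equivalently, passing to the action of the annihilation Lie algebra $\bigoplus_{n\ge-1}\bC L_n$ on $v$ — then forces $f(\partial,\lambda)=\partial+a+\Delta\lambda$ for a constant $\Delta$, so $M\cong M_{\Delta,a}$.

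For the third part, the submodule structure: a conformal submodule $N\subseteq M_{\Delta,a}$ is in particular a $\bC[\partial]$-submodule of the free module $\bC[\partial]v$, so $N=0$ or $N=p(\partial)\bC[\partial]v$ with $p$ monic, and $\lambda$-stability is equivalent to $p(\partial)\mid p(\partial+\lambda)(\partial+a+\Delta\lambda)$ in $\bC[\partial,\lambda]$. Since $p(\partial)$ and $p(\partial+\lambda)$ are coprime in $\bC(\lambda)[\partial]$ whenever $\deg p\ge1$ (their roots differ by the indeterminate $\lambda$), this divisibility forces $p(\partial)\mid\partial+a+\Delta\lambda$, hence $\deg p\le1$, with a degree-one $p$ possible exactly when $\Delta=0$ (and then $p(\partial)=\partial+a$ up to scalar). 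Thus $M_{\Delta,a}$ is irreducible for $\Delta\neq0$, while $M_{0,a}$ has the single nontrivial submodule $(\partial+a)M_{0,a}$, which is $\cong M_{1,a}$ because $w:=(\partial+a)v$ satisfies $L_\lambda w=(\partial+a+\lambda)w$.

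The fourth part — that the $M_{\Delta,a}$ with $\Delta\neq0$ exhaust the finite irreducible nontrivial $\mathrm{Vir}$-modules — is the main obstacle, and here I would invoke the structure theory of finite conformal modules from \cite{DK, CK} rather than reprove it. The scheme is: the $\bC[\partial]$-torsion submodule of a finite conformal module is a conformal submodule on which $\mathrm{Vir}$ acts trivially (since $p(\partial+\lambda)$ acts invertibly over $\bC(\lambda)$ on a finite-dimensional $\partial$-module), so a finite irreducible nontrivial module must be torsion-free, hence free over $\bC[\partial]$; a conformal-weight argument for the annihilation algebra then pins the rank down to $1$, reducing the claim to the second part. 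This freeness-plus-rank-one input is what I do not see how to circumvent; everything else above is routine.
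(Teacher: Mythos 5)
The paper offers no proof of this statement at all: Proposition~\ref{vir.mod} is quoted from \cite{CK} (see also \cite{DK}), so there is no in-paper argument to measure yours against, and your proposal is in fact more self-contained than the paper. The parts you work out are correct: the verification that $M_{\Delta,a}$ is a module, the reduction of the classification to the functional equation $(\lambda-\mu)f(\partial,\lambda+\mu)=f(\partial+\lambda,\mu)f(\partial,\lambda)-f(\partial+\mu,\lambda)f(\partial,\mu)$, and the submodule analysis via $p(\partial)\mid p(\partial+\lambda)(\partial+a+\Delta\lambda)$ together with the coprimality of $p(\partial)$ and $p(\partial+\lambda)$ in $\bC(\lambda)[\partial]$ all hold up, and the isomorphism $(\partial+a)M_{0,a}\cong M_{1,a}$ is the right computation. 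The step you leave as ``comparing the remaining coefficients'' is routine and is exactly the technique the paper uses in proving Proposition~\ref{classification}: comparing $\partial^{2m-1}$-coefficients forces $\deg_\partial f\le 1$; writing $f=f_0(\lambda)+f_1(\lambda)\partial$ then yields $f_1(\lambda+\mu)=f_1(\lambda)f_1(\mu)$, so $f_1\equiv 1$ (the case $f_1=0$ is excluded because your $\mu=0$ specialization already gives $f(\partial,0)=\partial+a$), and finally $\lambda f_0(\lambda)-\mu f_0(\mu)=(\lambda-\mu)f_0(\lambda+\mu)$ forces $f_0(\lambda)=a+\Delta\lambda$; note that cancelling $f$ at $\mu=0$ is legitimate precisely because nontriviality gives $f\neq 0$ in the integral domain $\bC[\partial,\lambda]$. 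For the exhaustion claim you correctly isolate the only non-elementary input — that a finite irreducible nontrivial $\mathrm{Vir}$-module is free of rank one — and defer it to \cite{CK,DK}; since the paper itself cites \cite{CK} for the entire proposition, invoking the same source for just this step is entirely consistent, and your torsion argument (torsion elements are annihilated because $p(\partial+\lambda)$ acts injectively on $\bC[\lambda]\otimes M$, hence an irreducible nontrivial finite module is $\bC[\partial]$-free) correctly supplies the freeness half of that input.
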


\begin{proposition}\label{hv.mod}\textup{(\cite{SY})}
    For the Hisenberg--Virasoro Lie conformal algebra $\hv$, all the free nontrivial $\hv$-modules of rank 1 over $\bC[\partial]$ are the following ones $(\Delta,a,b\in\bC)$\textup{:}
\begin{align*}
    M_{\Delta,a,b}=\bC[\partial]v,\ L_\lambda v=(\partial+\Delta\lambda+a)v,\ H_\lambda v=bv.
\end{align*}
\end{proposition}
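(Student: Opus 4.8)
The plan is to make the module explicit and reduce everything to polynomial identities. Fix a free generator $v$, so $M=\cp v$. By the $\partial$-compatibility in \eqref{conmod.1}, the $\lambda$-actions of the two $\cp$-generators of $\hv$ are recorded by two polynomials: $L_\lambda v=f(\partial,\lambda)\,v$ and $H_\lambda v=g(\partial,\lambda)\,v$ with $f,g\in\bC[\partial,\lambda]$. Feeding the three defining brackets of $\hv$ (see \eqref{HV}) into the module identity \eqref{conmod.2}, applied to the pairs $[L_\lambda L]$, $[L_\lambda H]$, $[H_\lambda H]$, turns the conformal-module structure into three polynomial identities in $\partial,\lambda,\mu$ relating $f$ and $g$; the whole proof is the solution of this system.

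First I would exploit $[L_\lambda L]=(\partial+2\lambda)L$. Since $\mathrm{Vir}=\cp L$ is a subalgebra of $\hv$, $M$ restricts to a free rank-one conformal $\mathrm{Vir}$-module, so by Proposition~\ref{vir.mod} either $L$ acts by zero on $M$, or $f(\partial,\lambda)=\partial+\Delta\lambda+a$ for some $\Delta,a\in\bC$ (the isomorphism in Proposition~\ref{vir.mod} being merely a rescaling of $v$, which leaves $f$ unchanged). If $f\equiv 0$, then the identity produced by $[L_\lambda H]$ reads $-\mu\,g(\partial,\lambda+\mu)=0$, which at $\lambda=0$ forces $g\equiv 0$; then the $\hv$-action on $M$ is identically zero, contradicting nontriviality. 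Hence $f(\partial,\lambda)=\partial+\Delta\lambda+a$.

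Next I would substitute this $f$ into the identity coming from $[L_\lambda H]=(\partial+\lambda)H$, namely
\begin{align*}
-\mu\,g(\partial,\lambda+\mu)=g(\partial+\lambda,\mu)(\partial+a+\Delta\lambda)-(\partial+\mu+a+\Delta\lambda)\,g(\partial,\mu),
\end{align*}
and extract information by degree bookkeeping. Writing $g(\partial,\lambda)=\sum_{i}g_i(\partial)\lambda^i$ with $n=\deg_\lambda g$ and $g_n\neq 0$, comparison of the coefficients of $\mu^{n}$ on the two sides yields
\begin{align*}
-n\lambda\,g_n(\partial)=(\partial+a+\Delta\lambda)\bigl(g_n(\partial+\lambda)-g_n(\partial)\bigr).
\end{align*}
If $n\geq 1$ and $\deg_\partial g_n\geq 1$, matching the top powers of $\partial$ gives the absurd equality $-n=\deg_\partial g_n$; and if $n\geq 1$ with $g_n$ a nonzero constant, the identity degenerates to $-n\lambda g_n=0$, again impossible. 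Hence $n=0$, i.e. $g=g(\partial)$ is independent of $\lambda$; substituting this back into the $[L_\lambda H]$ identity collapses it to $(\partial+a+\Delta\lambda)\bigl(g(\partial+\lambda)-g(\partial)\bigr)=0$, forcing $g$ to be a constant $b\in\bC$. The remaining identity, from $[H_\lambda H]=0$, then reduces to $b\cdot b=b\cdot b$ and is automatic. Conversely one checks directly that for all $\Delta,a,b\in\bC$ the assignment $L_\lambda v=(\partial+\Delta\lambda+a)v$, $H_\lambda v=bv$ satisfies \eqref{conmod.1}--\eqref{conmod.2} and is nontrivial, so these $M_{\Delta,a,b}$ exhaust the list. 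The one genuinely delicate step is the degree argument that forces $g$ to lose its $\lambda$- and $\partial$-dependence; the rest is routine verification.
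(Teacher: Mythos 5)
Your argument is correct: the reduction to the three polynomial identities obtained by feeding \eqref{HV} into \eqref{conmod.2}, the use of Proposition~\ref{vir.mod} (with the observation that a $\bC[\partial]$-module isomorphism of free rank-one modules is just rescaling of the generator, so $f$ is literally $\partial+\Delta\lambda+a$), and the $\mu^n$- and $\partial$-degree comparison forcing $g$ to be a constant are all sound, and the converse verification is routine. The paper itself gives no proof of this statement (it is quoted from \cite{SY}), but what you wrote is the standard direct computation and matches the style of argument the paper uses for its analogous classifications (e.g.\ the functional equations and the appeal to rank-one module classifications in the proofs of Theorems~\ref{thm:conmod.hvs} and \ref{thm:conmod.hvsii}), so nothing further is needed.
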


\section{Lie conformal superalgebras $\hvsi$ and $\hvsii$}\label{sec:Lie conformal salg}
\hspace{1.5em}In this section, we construct two  Lie conformal superalgebras related to the Heisenberg--Virasoro Lie conformal algebra. For a Lie conformal superalgebra $R$ of rank 3, we assume that $R_\ep=\mathcal{HV}=\cp L\oplus\cp H$, then $R_\op$ is of rank 1. We have the following results:
\begin{proposition}\label{classification}
    Let $R=\cp L\oplus\cp H\oplus\cp Y$ be a Lie conformal superalgebra that is a free \bZz-graded \cp-module, where $R_\ep=\cp L\oplus\cp H$  is the Heisenberg--Virasoro Lie conformal algebra and $R_\op=\cp Y$. Then $R$ is isomorphic to one of the following Lie conformal superalgebras\textup{:}

    \subno{1} $R_1$\textup{:} $\mathcal{HV}$, $[L_\lambda Y]=[H_\lambda Y]=[Y_\lambda Y]=0$,

    \subno{2} $R_2$\textup{:} $\mathcal{HV}$, $[L_\lambda Y]=(\partial+\lambda)Y$, $[H_\lambda Y]=0$, $[Y_\lambda Y]=\alpha H$, $\forall\ \alpha\in\bCx$,

    \subno{3} $R_3$\textup{:} $\mathcal{HV}$, $[L_\lambda Y]=(\partial+\beta\lambda+\gamma)Y$, $[H_\lambda Y]=\tau Y$, $[Y_\lambda Y]=0$, $\forall\ \beta,\gamma,\tau\in\bC$,

\noindent where $\mathcal{HV}$ means $L$ and $H$ satisfy the relations \eqref{HV}.
\end{proposition}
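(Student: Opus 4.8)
I would attack $R$ in two stages. First, view the odd part $R_\op=\cp Y$ as a conformal module over the even part $R_\ep=\hv$: by Proposition~\ref{hv.mod} (and the fact that the only trivial free rank-one $\hv$-module has $L_\lambda Y=H_\lambda Y=0$) there are exactly two cases — either $[L_\lambda Y]=[H_\lambda Y]=0$, or, for a suitable $\cp$-generator $Y$ of $R_\op$, $[L_\lambda Y]=(\partial+\Delta\lambda+a)Y$ and $[H_\lambda Y]=bY$ for some $\Delta,a,b\in\bC$. Every subsequent computation reduces to a polynomial identity in $\lambda,\mu,\partial$, using conformal sesquilinearity (to pull polynomials in $\partial$ out of a bracket) together with the elementary facts $[H_\lambda L]=\lambda H$ and $[(\partial Y)_\nu Y]=-\nu[Y_\nu Y]$. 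In the trivial case, write $[Y_\lambda Y]=p(\lambda,\partial)L+q(\lambda,\partial)H$; the Jacobi identity for $(L,Y,Y)$ has vanishing right-hand side and left-hand side $p(\mu,\partial+\lambda)(\partial+2\lambda)L+q(\mu,\partial+\lambda)(\partial+\lambda)H$, forcing $p=q=0$, so $R$ is $R_1$.

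\textbf{Step 2 (ruling out an $L$-component of $[Y_\lambda Y]$).} In the remaining case, write $[Y_\lambda Y]=u(\lambda,\partial)L+v(\lambda,\partial)H$. The Jacobi identity for $(H,Y,Y)$, using $[H_\lambda L]=\lambda H$ and $[H_\lambda H]=0$, yields on comparing the $L$- and $H$-components
\[
b\bigl(u(\lambda+\mu,\partial)+u(\mu,\partial)\bigr)=0,\qquad
\lambda\, u(\mu,\partial+\lambda)=b\bigl(v(\lambda+\mu,\partial)+v(\mu,\partial)\bigr).
\]
Putting $\lambda=0$ in the second identity gives $2b\,v(\mu,\partial)=0$, so $b=0$ or $v=0$, and in either case the second identity then forces $u=0$. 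Hence $[Y_\lambda Y]=v(\lambda,\partial)H$ in all cases, and $v\neq0$ forces $b=0$.

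\textbf{Step 3 (determining $v$) and conclusion.} If $v=0$ then $[Y_\lambda Y]=0$, and a direct check shows that for arbitrary $\Delta,a,b$ the remaining Jacobi identities — including the super-Jacobi identity $(Y,Y,Y)$ — all hold; relabelling $(\Delta,a,b)=(\beta,\gamma,\tau)$ gives $R_3$. If $v\neq0$, hence $b=0$, the Jacobi identity for $(L,Y,Y)$ becomes the recursion
\[
v(\mu,\partial+\lambda)(\partial+\lambda)=(\Delta\lambda+a-\lambda-\mu)\, v(\lambda+\mu,\partial)+(\partial+\mu+\Delta\lambda+a)\, v(\mu,\partial),
\]
and skew-symmetry gives $v(\lambda,\partial)=v(-\lambda-\partial,\partial)$. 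Comparing the leading term of $v$ in $\mu$ forces it to be a nonzero constant, forces $a=0$, and gives $\deg_\mu v=2\Delta-2$; the symmetry then makes this degree even, so $\Delta\in\{1,2,3,\dots\}$. Examining the next coefficient in $\mu$ of the same recursion is overdetermined for every $\Delta\geq2$ (already at $\Delta=2$ one gets incompatible conditions on the remaining unknown), so $\Delta=1$, $v$ is a nonzero constant $\alpha$, and with $[H_\lambda Y]=0$ this is exactly $R_2$. Since $R$ must fall into one of these cases, the classification follows.

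\textbf{Main obstacle.} Steps~1 and~2, and the verification that $R_1,R_2,R_3$ really are Lie conformal superalgebras, are routine. The crux is the last part of Step~3: squeezing out of a single polynomial recursion plus the $\lambda\mapsto-\lambda-\partial$ symmetry enough to eliminate all conformal weights $\Delta\geq2$ — conceptually, the statement that a nonzero $\cp H$-valued ``square'' $[Y_\lambda Y]$ of the odd generator is compatible with the $\hv$-module structure only when $Y$ has the same conformal weight as $H$. The presence of the extra generator $H$ in $\hv$, with $[H_\lambda L]=\lambda H\neq0$ (unlike $\mathrm{Vir}$), is what makes Step~2 go through.
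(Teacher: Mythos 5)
Your Steps 1 and 2 are fine, and they take a genuinely different (and cleaner) route than the paper: you import Proposition~\ref{hv.mod} to get $[L_\lambda Y]=(\partial+\Delta\lambda+a)Y$, $[H_\lambda Y]=bY$ in one stroke, where the paper rederives this from the Jacobi identities for $(L,L,Y)$ and $(H,H,Y)$; your Step 2 reproduces the content of \eqref{Hyy.1}--\eqref{Hyy.2} without first reducing the coefficients of $[Y_\lambda Y]$ to polynomials in $\partial$ alone. The easy parts of Step 3 are also correct: setting $\lambda=0$ gives $a=0$, the top coefficient of $v$ in $\mu$ must be a nonzero constant $c$ with $\deg_\mu v=2\Delta-2$, and the symmetry $v(\mu,\partial)=v(-\mu-\partial,\partial)$ forces this degree to be even, so $\Delta\in\{1,2,3,\dots\}$.

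The genuine gap is exactly at the point you flag as the crux: the elimination of all $\Delta\geq 2$ is asserted, not proved, and the mechanism you name does not work. Write $v(\mu,\partial)=\sum_{j=0}^{N}v_j(\partial)\mu^j$ with $v_N=c\in\bCx$ and $N=2\Delta-2$. The coefficient of $\mu^{N-1}$ in your recursion reads $(\partial+\lambda)v_{N-1}(\partial+\lambda)=\tfrac{cN}{2}\lambda^2+(\partial+2\lambda)v_{N-1}(\partial)$, which is solvable for \emph{every} $\Delta$, uniquely by $v_{N-1}(\partial)=\tfrac{cN}{2}\partial$ --- and this is precisely the value the symmetry also demands, so ``the next coefficient in $\mu$'' is never overdetermined. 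For $\Delta=2$ the inconsistency only appears at the constant coefficient: one is forced to $v_1=c\partial$ and then (via $\partial=0$) to $v_0=c\partial^2$, after which the full identity fails at the $\partial\lambda^2$ term; for $\Delta\geq 3$ you offer no argument at all. Since ruling out every $\Delta\geq 2$ is exactly what separates $R_2$ from a hypothetical larger family, this step must be carried out, e.g.\ by a genuine downward induction on the $v_j$ locating an inconsistency at a definite level, or by first proving that $v$ cannot depend on its first variable. Note also that you cannot simply quote the paper's shortcut here: the paper passes from $[Y_\lambda Y]=e(\partial,\lambda)H$ to $e=e(\partial)$ using skew-symmetry alone, but invariance under $\lambda\mapsto-\lambda-\partial$ by itself does not force independence of $\lambda$ (for instance $\lambda(\lambda+\partial)$ is invariant), so on your route the Jacobi identity for $(L,Y,Y)$ really has to do this work, and your proposal currently leaves it undone.
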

\begin{proof}
    Firstly we know that $\{L,H\}$ satisfy the relations \eqref{HV}, since $R_\ep$ is just an ordinary Lie conformal algebra for any Lie conformal superalgebra. Then we set $[L_\lambda Y]=a(\partial,\lambda)Y$, $[H_\lambda Y]=b(\partial,\lambda)Y$ and $[Y_\lambda Y]=d(\partial,\lambda)L+e(\partial,\lambda)H$, where $a(\partial,\lambda)=\sum_{i=0}^ma_i(\lambda)\partial^i$, $b(\partial,\lambda)=\sum_{i=0}^nb_i(\lambda)\partial^i$, $d(\partial,\lambda)=\sum_{i=0}^kd_i(\lambda)\partial^i$,
    $e(\partial,\lambda)=\sum_{i=0}^le_i(\lambda)\partial^i$ and they are all in $\bC[\partial,\lambda]$. Due to the skew-symmetry in \eqref{skew-symm}, we have $[Y_\lambda Y]=[Y_{-\partial-\lambda} Y]$. It leads to $d(\partial,\lambda)=d(\partial,-\partial-\lambda)$ and $e(\partial,\lambda)=e(\partial,-\partial-\lambda)$. Considering the degree in $\partial$ on the both sides of the two equalities respectively, we get $d(\partial,\lambda)=d(\partial)$ and $e(\partial,\lambda)=e(\partial)$. It follows that
    \begin{align}\label{YY}
        [Y_\lambda Y]=d(\partial)L+e(\partial)H,
    \end{align}
   where $d(\partial)=\sum_{i=0}^kd_i\partial^i$, $e(\partial)=\sum_{i=0}^le_i\partial^i$ and $d_i, e_i\in\bC$.

     From the Jacobi identity $[L_\lambda[L_\mu Y]]=[[L_\lambda L]_{\lambda+\mu}Y]+[L_\mu[L_\lambda Y]]$,  we have
    \begin{align*}
        [L_\lambda\big(a(\partial,\mu)Y\big)]=[\big((\partial+2\lambda)L\big)_{\lambda+\mu}Y]+[L_\mu\big(a(\partial,\lambda)Y\big)].
    \end{align*}
    By \eqref{conformal-sesqui} and comparing the coefficients of $Y$, we obtain
    \begin{align}\label{LLy}
        a(\partial+\lambda,\mu)a(\partial,\lambda)=(\lambda-\mu)a(\partial,\lambda+\mu)+a(\partial+\mu,\lambda)a(\partial,\mu).
    \end{align}
    Comparing the coefficients of $\partial^{2m-1}$ on the both sides of \eqref{LLy}, we conclude that $m(\lambda-\mu)a_m(\lambda)a_m(\mu)=0$ if $m>1$. It implies that $a_m(\lambda)=0$ for $m>1$. So $a(\partial,\lambda)=a_0(\lambda)+a_1(\lambda)\partial$. Substituting it into \eqref{LLy} and comparing the coefficients of $\partial$ and the constant terms respectively , we get
    \begin{align}
        a_1(\lambda)a_1(\mu)&=a_1(\lambda+\mu),\label{LLy.1}\\
        \lambda a_0(\lambda)a_1(\mu)-\mu a_1(\lambda)&a_0(\mu)=(\lambda-\mu)a_0(\lambda+\mu).\label{LLy.2}
    \end{align}
    By \eqref{LLy.1}, we have $a_1(\lambda)=0$ or 1. If $a_1(\lambda)=0$, then \eqref{LLy.2} becomes $(\lambda-\mu)a_0(\lambda+\mu)=0$, thus $a_0(\lambda)=0$, namely $[L_\lambda Y]=0$. If $a_1(\lambda)=1$, \eqref{LLy.2} turns into $\lambda a_0(\lambda)-\mu a_0(\mu)=(\lambda-\mu)a_0(\lambda+\mu)$. The degree of $a_0(\lambda)$ cannot be greater than 1 because the left hand of the equality does not contain the term of $\lambda\mu$. Therefore we can assume that $a_0(\lambda)=\beta\lambda+C_1$, where $\beta$ and $C_1$ are constants in \bC, namely $[L_\lambda Y]=(\partial+\beta\lambda+C_1)Y$.

    Using the Jacobi identity $[H_\lambda[H_\mu Y]]=[[H_\lambda H]_{\lambda+\mu}Y]+[H_\mu[H_\lambda Y]]$ and comparing the coefficients of $Y$, we get
    \begin{align*}
        b(\partial+\lambda,\mu)b(\partial,\lambda)=b(\partial+\mu,\lambda)b(\partial,\mu).
    \end{align*}
    Comparing the coefficients of $\partial^{2n-1}$ on the both sides, we obtain $n(\lambda-\mu)b_n(\lambda)b_n(\mu)=0$ if $n\ge1$, which means $b_n(\lambda)=0$ for $n\ge 1$. So $b(\partial,\lambda)=b_0(\lambda)$. Then $[H_\lambda Y]=b_0(\lambda)Y$.

    We need to discuss the following three cases:

    \afCase{1} $a_1(\lambda)=0$, namely $[L_\lambda Y]=0$.

    From the Jacobi identity  $[L_\lambda[Y_\mu Y]]=[[L_\lambda Y]_{\lambda+\mu}Y]+[Y_\mu[L_\lambda Y]]$ and \eqref{YY}, we obtain
    \begin{align*}
        (\partial+2\lambda)d(\partial+\lambda)L+(\partial+\lambda)e(\partial+\lambda)H=0.
    \end{align*}
    It follows that $d(\partial)=e(\partial)=0$, namely $[Y_\lambda Y]=0$. Now we consider the Jacobi identity  $[L_\lambda[H_\mu Y]]=[[L_\lambda H]_{\lambda+\mu}Y]+[H_\mu[L_\lambda Y]]$, this is equivalent to
    \begin{align*}
        \mu b_0(\lambda+\mu)Y=0.
    \end{align*}
    It forces $b_0(\lambda)=0$. Therefore $R$ is isomorphic to $R_1$.

    \afCase{2} $a_1(\lambda)=1$ and $b_0(\lambda)=0$, namely $[L_\lambda Y]=(\partial+\beta\lambda+C_1)Y$ and $[H_\lambda Y]=0$.

    Using the Jacobi identity  $[H_\lambda[Y_\mu Y]]=[[H_\lambda Y]_{\lambda+\mu}Y]+[Y_\mu[H_\lambda Y]]$, we get $\lambda d(\partial+\lambda)H=0$. It leads to $d(\partial)=0$, then $[Y_\lambda Y]=e(\partial)H$ by \eqref{YY}. Considering the Jacobi identity  $[L_\lambda[Y_\mu Y]]=[[L_\lambda Y]_{\lambda+\mu}Y]+[Y_\mu[L_\lambda Y]]$ and comparing the coefficients of $H$, we obtain
    \begin{align}\label{Lyy.2}
        (\partial+\lambda)e(\partial+\lambda)=(\partial+2\beta\lambda-\lambda+2C_1)e(\partial).
    \end{align}
    Equating the terms of the highest degree in $\lambda$ on the both sides of \eqref{Lyy.2}, we get $e(\partial)=C_2$, where $C_2$ is a constant.

    \subno{1} If $C_2\neq 0$, substituting $e(\partial)=C_2$ into \eqref{Lyy.2}, we get $\beta=1$ and $C_1=0$, namely $[L_\lambda Y]=(\partial+\lambda)Y$. Meanwhile we have $[H_\lambda Y]=0$ and $[Y_\lambda Y]=C_2 Y$. Then $R$ is isomorphic to $R_2$.

    \subno{2} If $C_2=0$, namely $[Y_\lambda Y]=0$. In addition, we have $[H_\lambda Y]=0$. Therefore it is a special case of $R_3$ if we take $\tau=0$.

    \afCase{3} $a_1(\lambda)=1$ and $b_0(\lambda)\neq 0$, namely $[L_\lambda Y]=(\partial+\beta\lambda+C_1)Y$ and $[H_\lambda Y]=b_0(\lambda)Y$.

    By the Jacobi identity  $[H_\lambda[Y_\mu Y]]=[[H_\lambda Y]_{\lambda+\mu}Y]+[Y_\mu[H_\lambda Y]]$ and comparing the coefficients of $L$ and $H$ respectively, we can get
    \begin{align}
        2b_0(\lambda)&d(\partial)=0,\label{Hyy.1}\\
        \lambda d(\partial+\lambda)&=2b_0(\lambda)e(\partial).\label{Hyy.2}
    \end{align}
    Because $b_0(\lambda)\neq 0$, \eqref{Hyy.1} forces $d(\partial)=0$. Plugging $d(\partial)=0$ into \eqref{Hyy.2}, we have $e(\partial)=0$. Therefore $[Y_\lambda Y]=0$ by \eqref{YY}. Due to the Jacobi identity  $[L_\lambda[H_\mu Y]]=[[L_\lambda H]_{\lambda+\mu}Y]+[H_\mu[L_\lambda Y]]$, we obtain
    \begin{align*}
        \mu b_0(\lambda+\mu)=\mu b_0(\mu).
    \end{align*}
    Thus $b_0(\lambda)=C_3$, where $C_3$ is a nonzero constant, namely $[H_\lambda Y]=C_3 Y$.

    Combining \textbf{Case 2}(2) and \textbf{Case 3}, we obtain that $R$ is isomorphic to $R_3$.
\end{proof}

The Lie conformal superalgebra $R_1$ in Proposition~\ref{classification} is the trivial case. We are more interested in the other two cases. So we construct two Lie conformal superalgebras related to the Heisenberg--Virasoro Lie conformal algebra in Definition~\ref{hsvi.def} and Definition~\ref{hsvii.def}, respectively. 

\begin{remark} 
Up to isomorphism, we may take $\alpha=2$ in $\hvsi$. At this time, the Lie conformal superalgebra $\mathcal{HVS}(2)$ is called the \textit{Heisenberg--Virasoro Lie conformal superalgebra} in \cite{CDH}, where $\hvsi$ was constructed by using the conformal module of Heisenberg--Virasoro Lie conformal algebra. For convenience, we abbreviate $\hvs(2)$ by $\hvs$ in the following. Interestingly, one Lie conformal superalgebra which is isomorphic to $\hvs$ was obtained as a lower truncated subalgebra of the twisted version of a class of $\bZ$-graded Lie conformal superalgebras in \cite{X22}.
\end{remark}

\section{Conformal derivations}\label{sect:conformal derivation}
\hspace{1.5em}In this section, we introduce the definition of conformal derivations and determine the conformal derivations of $\hvs$ and $\hvsii$.
\begin{definition}
    Let $R$ be a Lie conformal superalgebra. A conformal linear map $d_\lambda:R\rightarrow R$ is called a \textit{conformal derivation} if for any $a,b\in R$, it holds that
    \begin{align}\label{derivation}
        d_\lambda([a_\mu b])=[(d_\lambda a)_{\lambda+\mu}b]+(-1)^{|a||d|}[a_\mu(d_\lambda b)].
    \end{align}
\end{definition}

The space of all conformal derivations of $R$ are denoted by $\CDer(R)$. For any $a\in R$, one can define a linear map $(\ad a)_\lambda: R\rightarrow R$ by $(\ad a)_\lambda b=[a_\lambda b]$ for all $b\in R$. One can check easily that $(\ad a)_\lambda$ is a conformal derivation of $R$. Any conformal derivation of this form is called an \textit{inner derivation}. Denote by $\CInn(R)$ the space of all inner derivations of $R$.
\begin{theorem}\label{hvs.d}
    For the Heisenberg--Virasoro Lie conformal superalgebra \hvs, we have $$\CDer(\hvs)=\CInn(\hvs)\oplus D_\ep,$$ where $D_\ep=\{\dl\in\CDer(\hvs)_\ep\ |\ \dl L=C_1H,\ \dl H=\dl G=0,\ \forall\ C_1\in\bCx\}$.
\end{theorem}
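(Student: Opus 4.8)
The plan is to write an arbitrary conformal derivation $\dl$ as a sum of a homogeneous even part and a homogeneous odd part, and handle each separately, since $\CDer(\hvs) = \CDer(\hvs)_\ep \oplus \CDer(\hvs)_\op$. In both cases I would parametrize $\dl$ by its values on the three free generators $L,H,G$, writing $\dl L = f_1(\partial,\lambda)L + f_2(\partial,\lambda)H + f_3(\partial,\lambda)G$ and similarly for $\dl H$ and $\dl G$, where the polynomials respect the $\bZz$-grading (so for an even derivation the coefficient of $G$ in $\dl L, \dl H$ vanishes, and $\dl G$ is a multiple of $G$; for an odd derivation the roles swap). The conformal linearity $\dl(\partial x) = (\partial+\lambda)\dl x$ is automatic once we treat $\dl$ as acting on generators. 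Then I would impose \eqref{derivation} on each pair from $\{L,H,G\}$ and extract polynomial identities by comparing coefficients in $\partial$.

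The even case: the restriction of $\dl$ to $\hvs_\ep = \hv$ is a conformal derivation of the Heisenberg--Virasoro Lie conformal algebra, whose conformal derivations are already known (from \cite{SY}); modulo inner derivations these are spanned by the one sending $L \mapsto H$, $H \mapsto 0$. So after subtracting a suitable inner derivation $\ad(a)_\lambda$ (with $a \in \hv$) I may assume $\dl L = C_1 H$, $\dl H = 0$ for some constant $C_1$, and it remains to pin down $\dl G = h(\partial,\lambda)G$. Apply \eqref{derivation} to the pair $(L,G)$: using $[L_\lambda G] = (\partial+\lambda)G$ and $\dl L = C_1 H$, the term $[(\dl L)_{\lambda+\mu}G] = [(C_1 H)_{\lambda+\mu}G] = 0$ since $[H_\mu G]=0$, so one gets a functional equation forcing $h(\partial,\lambda)$ to have the form $h(\partial,\lambda) = -\lambda c$ for a constant $c$ — i.e. $\dl|_{\cp G}$ is $\partial$-linear of weight $-\lambda$ — mimicking Proposition~\ref{vir.mod}. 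Then apply \eqref{derivation} to $(G,G)$: $\dl[G_\lambda G] = \dl(\alpha H) = 0$ must equal $[(\dl G)_{\lambda+\mu}G] + [G_\mu(\dl G)]$, and since $[G_\mu G] = \alpha H$ this gives $(h(-\lambda-\mu,\lambda) + h(\mu,\lambda))\alpha H$; with $h(\partial,\lambda)=-\lambda c$ this is $-2\lambda c\,\alpha H$, forcing $c = 0$ (here $\alpha \neq 0$ is essential, which is exactly why $\alpha = 2 \neq 0$ matters). Thus the even part of $\dl$ is inner plus the claimed $D_\ep$-component, and one checks $D_\ep \cap \CInn(\hvs) = 0$ since no $\ad(a)_\lambda$ sends $L$ to a nonzero scalar multiple of $H$ while killing $H$ and $G$.

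The odd case: I would show $\CDer(\hvs)_\op \subseteq \CInn(\hvs)_\op$, i.e. every odd conformal derivation is $\ad(a)_\lambda$ for some $a \in \cp G$. Write $\dl L = p(\partial,\lambda)G$, $\dl H = q(\partial,\lambda)G$, $\dl G = r(\partial,\lambda)L + s(\partial,\lambda)H$. Applying \eqref{derivation} to $(L,L)$, $(L,H)$, $(H,H)$, $(L,G)$, $(H,G)$ and $(G,G)$ in turn yields a system of polynomial identities; the $(H,H)$ and $(H,G)$ relations should quickly force $q$ and $r$ to vanish or be severely constrained, the $(L,L)$ relation is the Virasoro-type equation pinning $p$ down to $p(\partial,\lambda) = a_0(\partial + \text{const}\cdot\lambda)$-type data, and comparing with $\ad(p_0 G)_\lambda L = [(p_0 G)_\lambda L] = -(\partial+\lambda)\cdot(\text{scalar})G$ (via skew-symmetry) shows the remaining freedom is exactly inner. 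I expect the main obstacle to be the $(G,G)$ relation in the odd case: since $\dl$ is odd, the sign $(-1)^{|G||\dl|} = -1$ enters, and $\dl(\alpha H) = \alpha q(\partial,\lambda)G$ must be reconciled with $[(\dl G)_{\lambda+\mu}G] - [G_\mu(\dl G)]$, which mixes the unknowns $r,s$ through $[L_\mu G]$ and $[H_\mu G]=0$; carefully bookkeeping the skew-symmetry substitutions $\lambda \mapsto -\lambda-\partial$ there, and ensuring no spurious non-inner odd derivation survives, is the delicate step. Throughout, the computations are routine coefficient comparisons in $\bC[\partial,\lambda,\mu]$ of the type already illustrated in the proof of Proposition~\ref{classification}.
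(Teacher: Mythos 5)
Your overall strategy (split $\CDer(\hvs)$ into even and odd parts, parametrize $\dl$ on the generators $L,H,G$, and normalize by subtracting inner derivations) is the same as the paper's, and your even case is essentially sound: citing the known conformal derivations of $\hv$ for the restriction to $\hvs_\ep$ is a legitimate shortcut of the paper's recomputation, and $D_\ep\cap\CInn(\hvs)=0$ is checked correctly. Two intermediate claims there are off, however. The $(L,G)$ identity, once $\dl L=C_1H$ and $\dl H=0$, reads $(\partial+\lambda+\mu)h(\partial,\lambda)=(\partial+\mu)h(\partial+\mu,\lambda)$, and setting $\mu=0$ gives $\lambda h(\partial,\lambda)=0$, i.e. $h=0$ outright --- it does not leave a family $h=-\lambda c$, so neither the $(G,G)$ relation nor $\alpha\neq0$ is needed in the even case (your parenthetical that ``$\alpha\neq0$ is essential'' there is misplaced; the conclusion survives because $\{0\}\subseteq\{-\lambda c\}$, but the mechanism you describe is not what happens).

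The genuine gap is in the odd case, which you leave as an unexecuted ``delicate step'' and, moreover, point at the wrong place. From the $(L,L)$ identity one gets only that $p(\partial,\lambda)=p_0(\lambda)$ is independent of $\partial$ (an arbitrary polynomial in $\lambda$), not ``$a_0(\partial+\mathrm{const}\cdot\lambda)$-type data''; and since $[G_\lambda L]=\lambda G$ (not $-(\partial+\lambda)\cdot(\text{scalar})G$ as you write), an inner odd derivation $(\ad w(\partial)G)_\lambda$ sends $L\mapsto \lambda w(-\lambda)G$, so subtracting inner derivations removes only the part of $p_0(\lambda)$ divisible by $\lambda$: a constant term $C_2$ with $\dl L=C_2G$ survives and is \emph{not} inner. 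Hence your claim that skew-symmetry comparison ``shows the remaining freedom is exactly inner'' is unjustified; the whole point is to prove $C_2=0$, and this is exactly the step you never carry out. The paper does it via the $(L,G)$ identity \eqref{derivation}: after $(H,G)$ gives $r=0$, so $\dl G=s(\partial,\lambda)H$, one obtains \eqref{dLG}, namely $2C_2+(\partial+\mu)s(\partial+\mu,\lambda)=(\partial+\lambda+\mu)s(\partial,\lambda)$, and $\mu=0$ yields $2C_2=\lambda s(\partial,\lambda)$, forcing $C_2=s=0$. Note that the $2C_2$ comes from $[(C_2G)_{\lambda+\mu}G]=2C_2H$, so this is precisely where $[G_\lambda G]=2H\neq0$ is essential --- consistently with the fact that for the algebra with $[G_\lambda G]=0$ (the case $R_3$ with $\beta=1$, $\tau=0$, cf. Theorem~\ref{hvsii.d}) a non-inner odd derivation $\dl L=C_4E$ really does exist. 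Without this computation your argument does not establish $\CDer(\hvs)_\op=\CInn(\hvs)_\op$, so the proposal as it stands is incomplete.
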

\begin{proof}
    According to the degree of the conformal derivation $d$, two different cases should be discussed.

    \afCase{1} $|d|=0$.

    We can suppose that $\dl L=f_1(\partial,\lambda)L+f_2(\partial,\lambda)H$, where $f_i(\partial,\lambda) \in\bC[\partial,\lambda]\ (i=1,2)$. If $\dl\in\CDer(\hvs)$, then it has to satisfy $\dl[L_\mu L]=[(\dl L)_{\lambda+\mu} L]+[L_\mu(d_\lambda L)]$. By \eqref{conformal-sesqui}, the left hand of the equality becomes
    \begin{align*}
        \dl[L_\mu L]=\dl \big((\partial+2\mu)L\big)=(\partial+\lambda+2\mu)f_1(\partial,\lambda)L+(\partial+\lambda+2\mu)f_2(\partial,\lambda)H,
    \end{align*}
    and the right hand turns into
    \begin{align*}
        [(\dl L)_{\lambda+\mu} L]+[L_\mu(d_\lambda L)]&=[\big(f_1(\partial,\lambda)L+f_2(\partial,\lambda)H\big)_{\lambda+\mu}L]+[L_\mu\big(f_1(\partial,\lambda)L+f_2(\partial,\lambda)H\big)]\\
        &=(\partial+2\lambda+2\mu)f_1(-\lambda-\mu,\lambda)L+(\lambda+\mu)f_2(-\lambda-\mu,\lambda)H\\
        &\phantom{=\,}+(\partial+2\mu)f_1(\partial+\mu,\lambda)L+(\partial+\mu)f_2(\partial+\mu,\lambda)H.
    \end{align*}
    Then, comparing the coefficients of $L$ and $H$ respectively, we obtain
    \begin{align}
        (\partial+2\lambda+2\mu)f_1(-\lambda-\mu,\lambda)+(\partial+2\mu)f_1(\partial+\mu,\lambda)=(\partial+\lambda+2\mu)f_1(\partial,\lambda),\label{dLL.L}\\
        (\lambda+\mu)f_2(-\lambda-\mu,\lambda)+(\partial+\mu)f_2(\partial+\mu,\lambda)=(\partial+\lambda+2\mu)f_2(\partial,\lambda).\label{dLL.H}
    \end{align}
    Let $f_1(\partial,\lambda)=\sum_{i=0}^nf_{1,i}(\lambda)\partial^i$. Comparing the coefficients of $\partial^n$ on the both sides of \eqref{dLL.L}, we get $(\lambda-n\mu)f_{1,n}(\lambda)=0$ if $n>1$, which implies $f_{1,n}(\lambda)=0$ for $n>1$. Therefore $f_1(\partial,\lambda)=f_{1,0}(\lambda)+f_{1,1}(\lambda)\partial$. Similarly, we can deduce that $f_2(\partial,\lambda)=f_{2,0}(\lambda)$ by \eqref{dLL.H}. Replacing $\dl$ by $\dl-f_{1,1}(-\partial)(\ad L)_\lambda$, we have
    \begin{align*}
        \dl L=\big(f_{1,0}(\lambda)-2\lambda f_{1,1}(\lambda)\big)L+f_{2,0}(\lambda)H.
    \end{align*}
    In other words, there exist $f_1(\lambda), f_2(\lambda)\in\bC[\lambda]$, such that $\dl L=f_1(\lambda)L+f_2(\lambda)H$. Applying $\dl$ to $[L_\mu L]=(\partial+2\mu)L$, we get $(\partial+\lambda+2\mu)f_1(\lambda)L=0$ at this time, which shows that $f_1(\lambda)=0$, namely $\dl L=f_2(\lambda)H$. Assume $f_2(\lambda)=\lambda f(\lambda)+C_1$, where $f(\lambda)\in\bC[\lambda]$, $C_1\in\bC$. Then, replacing $\dl$ by $\dl-f(-\partial)(\ad H)_\lambda$, we have $\dl L=C_1H$.

    Now we can suppose that $\dl H=g_1(\partial,\lambda)L+g_2(\partial,\lambda)H$ and $\dl G=h(\partial,\lambda)G$, where $g_i(\partial,\lambda)(i=1,2)$, $h(\partial,\lambda)\in\bC[\partial,\lambda]$. Applying $\dl$ to $[L_\mu H]=(\partial+\mu)H$, we get
    \begin{align}
        (\partial+2\mu)g_1(\partial+\mu,\lambda)=(\partial+\lambda+\mu)g_1(\partial,\lambda),\label{dLH.L}\\
        (\partial+\mu)g_2(\partial+\mu,\lambda)=(\partial+\lambda+\mu)g_2(\partial,\lambda).\label{dLH.H}
    \end{align}
    Letting $\mu=0$ in \eqref{dLH.L} and \eqref{dLH.H} respectively, we obtain that $g_1(\partial,\lambda)=g_2(\partial,\lambda)=0$, namely $\dl H=0$. Applying $\dl$ to $[L_\mu G]=(\partial+\mu)G$, we get $(\partial+\mu)h(\partial+\mu,\lambda)=(\partial+\lambda+\mu)h(\partial,\lambda)$. Taking $\mu=0$, we can deduce that $h(\partial,\lambda)=0$ and $d_\lambda G=0$ consequently. Actually, if $C_1\neq 0$, the new $\dl$ is a conformal derivation but not an inner derivation.

    \afCase{2} $|d|=1$.

    We can suppose that $\dl L=p(\partial,\lambda)G$, where $p(\partial,\lambda)\in\bC[\partial,\lambda]$. Considering $d_\lambda[L_\mu L]=\dl\big((\partial+2\mu)L\big)$ and comparing the coefficients of $L$, we have
    \begin{align*}
        (\lambda+\mu)p(-\lambda-\mu,\lambda)+(\partial+\mu)p(\partial+\mu,\lambda)=(\partial+\lambda+2\mu)p(\partial,\lambda).
    \end{align*}
    This is similar to \eqref{dLL.H} and it gives that $p(\partial,\lambda)=p_0(\lambda)\in\bC[\lambda]$. Assume $p_0(\lambda)=\lambda p(\lambda)+C_2$, where $p(\lambda)\in\bC[\lambda]$, $C_2\in\bC$. Replacing $\dl$ by $\dl-p(-\partial)(\ad G)_\lambda$, we have $\dl L=C_2G$.

    Now we can suppose that $\dl H=q(\partial,\lambda)G$ and $d_\lambda G=r(\partial,\lambda)L+s(\partial,\lambda)H$, where $q(\partial,\lambda)$, $r(\partial,\lambda)$ and $s(\partial,\lambda)$ are all in $\bC[\partial,\lambda]$. Applying $\dl$ to $[L_\mu H]=(\partial+\mu)H$, we get
    \begin{align}\label{dLH.1}
         (\partial+\mu)q(\partial+\mu,\lambda)=(\partial+\lambda+\mu)q(\partial,\lambda).
    \end{align}
    This is similar to \eqref{dLH.H} and it leads to $\dl H=0$. Applying $\dl$ to $[H_\mu G]=0$, we get $\mu r(\partial+\mu,\lambda)H=0$. It follows that $r(\partial,\lambda)=0$, namely $\dl G=s(\partial,\lambda)H$. Applying $\dl$ to $[L_\mu G]=(\partial+\mu)G$, we have
    \begin{align}\label{dLG}
        2C_2+(\partial+\mu)s(\partial+\mu,\lambda)=(\partial+\lambda+\mu)s(\partial,\lambda).
    \end{align}
    Letting $\mu=0$ in \eqref{dLG}, it implies that $2C_2=\lambda s(\partial,\lambda)$. Therefore, if and only if $C_2=s(\partial,\lambda)=0$, the equality holds. In other words, the conformal derivations of degree one must be inner derivations.
\end{proof}
\begin{theorem}\label{hvsii.d}
    For the Lie conformal superalgebra \hvsii, we have
    \begin{align*}
        \CDer\big(\hvsii\big)=\CInn\big(\hvsii\big)\oplus\delta_{\tau,0}D_\ep\oplus \delta_{\beta,1}\delta_{\tau,0}D_\op,
    \end{align*}
    where $D_\ep=\{\dl\in\CDer\big(\hvsii\big)_\ep\ |\ \dl L=C_3H,\ \dl H=\dl E=0,\ \forall\ C_3\in\bCx\}$ and $D_\op=\{\dl\in\CDer\big(\hvsii\big)_\op\ |\ \dl L=C_4E,\ \dl H=\dl E=0,\ \forall\  C_4\in\bCx\}$.
\end{theorem}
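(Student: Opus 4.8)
The plan is to follow the template of the proof of Theorem~\ref{hvs.d}: I would split according to the parity $|d|\in\{\ep,\op\}$ of the conformal derivation $d$, and in each parity reduce $d$ modulo $\CInn\big(\hvsii\big)$ by subtracting polynomial multiples of the inner derivations, namely $\phi(-\partial)(\ad L)_\lambda$ and $\phi(-\partial)(\ad H)_\lambda$ when $|d|=\ep$, or $\phi(-\partial)(\ad E)_\lambda$ when $|d|=\op$, which are again inner. In each parity one writes the most general ansatz for $\dl$ on the generators $L,H,E$ compatible with the $\bZz$-grading, imposes the derivation identity \eqref{derivation} on the three defining $\lambda$-brackets of $\hvsii$, and compares coefficients of $L,H,E$ and of powers of $\partial$.

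For $|d|=\ep$ I would set $\dl L=f_1(\partial,\lambda)L+f_2(\partial,\lambda)H$, $\dl H=g_1(\partial,\lambda)L+g_2(\partial,\lambda)H$ and $\dl E=h(\partial,\lambda)E$. Since the brackets $[L_\mu L]=(\partial+2\mu)L$ and $[L_\mu H]=(\partial+\mu)H$ of $\hvsii$ coincide with those of $\hvs$, the argument of Theorem~\ref{hvs.d} carries over verbatim and, after subtracting inner derivations, gives $\dl L=C_3H$ and $\dl H=0$. The new input is obtained by applying $\dl$ to $[L_\mu E]=(\partial+\beta\mu+\gamma)E$: comparing the coefficients of $E$ (the term $[(C_3H)_{\lambda+\mu}E]=C_3\tau E$ is where $\tau$ enters) and then setting $\mu=0$ reduces it to $\lambda\,h(\partial,\lambda)=C_3\tau$. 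Hence $h=0$, and $C_3\tau=0$, so $C_3$ may be nonzero exactly when $\tau=0$; a direct check (the remaining brackets $[H_\mu E]=\tau E$ and $[H_\mu H]=[E_\mu E]=0$ impose nothing) then confirms that $\dl L=C_3H,\ \dl H=\dl E=0$ is indeed a conformal derivation. This produces $\CDer\big(\hvsii\big)_\ep=\CInn\big(\hvsii\big)_\ep\oplus\delta_{\tau,0}D_\ep$.

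For $|d|=\op$ I would set $\dl L=p(\partial,\lambda)E$, $\dl H=q(\partial,\lambda)E$ and $\dl E=r(\partial,\lambda)L+s(\partial,\lambda)H$, recording that by skew-symmetry $[E_\nu L]=\big((\beta-1)\partial+\beta\nu-\gamma\big)E$, $[E_\nu H]=-\tau E$ and $[H_\nu L]=\nu H$. Applying $\dl$ to $[L_\mu L]=(\partial+2\mu)L$ and comparing powers of $\partial$ forces $p$ to have degree at most $1$ in $\partial$, say $p(\partial,\lambda)=p_0(\lambda)+p_1(\lambda)\partial$, together with the relation $(\beta\lambda-\gamma)p_1(\lambda)=(\beta-1)p_0(\lambda)$; subtracting a suitable multiple of $(\ad E)_\lambda$ then normalizes $\dl L$ to $0$ when $\beta\neq1$, and to $C_4E$ (with $p_1=0$) when $\beta=1$. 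Applying $\dl$ to $[H_\mu E]=\tau E$ gives $\tau\,r(\partial,\lambda)=0$ and $\tau\,s(\partial,\lambda)=\mu\,r(\partial+\mu,\lambda)$, so $r=0$ in all cases; applying $\dl$ to $[L_\mu E]=(\partial+\beta\mu+\gamma)E$ and setting $\mu=0$ then gives $(\lambda+\gamma)\,s(\partial,\lambda)=0$, whence $s=0$ and $\dl E=0$. Finally, applying $\dl$ to $[L_\mu H]=(\partial+\mu)H$ yields a functional equation for $q$ whose degree analysis in $\partial$ forces $q=0$, and, when $\beta=1$, its constant term forces $C_4\tau=0$. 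Thus, modulo $\CInn\big(\hvsii\big)$, the odd derivations are trivial unless $\beta=1$ and $\tau=0$, in which case one is left with $\dl L=C_4E,\ \dl H=\dl E=0$, which a direct check confirms is a conformal derivation; this gives the summand $\delta_{\beta,1}\delta_{\tau,0}D_\op$.

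The step I expect to be the main obstacle is the bookkeeping of $\partial$-degrees in the functional equations coming from $[L_\mu L]$ and $[L_\mu H]$: unlike in $\hvs$, the brackets $[E_\nu L]$ and $[L_\mu E]$ now carry the parameters $\beta$ and $\gamma$, so the reductions of $p$ and $q$ and the normalization by $(\ad E)_\lambda$ genuinely split into the cases $\beta=1$ and $\beta\neq1$, and one must keep careful track of how the surviving constant $C_4$ interacts with $\tau$. The concluding verifications, that $D_\ep$ and $D_\op$ do consist of conformal derivations and that $D_\ep\cap\CInn\big(\hvsii\big)=D_\op\cap\CInn\big(\hvsii\big)=0$ (an inner derivation annihilating both $H$ and $E$ must be $\ad$ of a $\cp$-multiple of $H$, resp.\ of $E$, and such a map cannot send $L$ to a nonzero constant multiple of $H$, resp.\ of $E$, because $[H_\lambda L]=\lambda H$, resp.\ $[E_\lambda L]=(\lambda-\gamma)E$ when $\beta=1$, vanishes at some value of $\lambda$), are routine.
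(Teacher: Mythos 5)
Your proposal is correct and takes essentially the same route as the paper's proof: split by the parity of $d$, normalize modulo inner derivations via $(\ad L)_\lambda$, $(\ad H)_\lambda$, $(\ad E)_\lambda$, and extract the constraints by applying the derivation identity to the defining brackets. The only differences are minor streamlinings — you read off $h=0$, $C_3\tau=0$ (resp.\ $q=0$, $C_4\tau=0$) from the $\mu=0$ specialization of the identity on $[L_\mu E]$ (resp.\ $[L_\mu H]$) instead of the paper's case analysis on $\tau$ and on the divisibility of $p_0(\lambda)$ by $(\lambda-\gamma)$, and you spell out the directness check $D_\ep\cap\CInn=D_\op\cap\CInn=0$ that the paper leaves implicit.
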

\begin{proof}
    Two cases need to be discussed based on the degree of the conformal derivation $d$.

    \afCase{1} $|d|=0$.

    We suppose that $\dl L=f_1(\partial,\lambda)L+f_2(\partial,\lambda)H$, where $f_i(\partial,\lambda) \in\bC[\partial,\lambda]\ (i=1,2)$. Since $[L_\lambda L]=(\partial+2\lambda)L$, which is similar to the case of \hvs, we can deduce that $f_1(\partial,\lambda)=f_{1,0}(\lambda)+f_{1,1}(\lambda)\partial$ and $f_2(\partial,\lambda)=f_{2,0}(\lambda)$. Assume $f_{2,0}(\lambda)=\lambda f(\lambda)+C_3$, where $f(\lambda)\in\bC[\lambda]$, $C_3\in\bC$. Similar to the discussion of Case~1 in Lemma~\ref{hvs.d}, $\dl$ can be replaced by $\dl-f_{1,1}(-\partial)(\ad L)_\lambda-f(-\partial)(\ad H)_\lambda$ and we have $\dl L=C_3H$ as a consequence. Then we suppose that $\dl H=g_1(\partial,\lambda)L+g_2(\partial,\lambda)H$, where $g_i(\partial,\lambda) \in\bC[\partial,\lambda]\ (i=1,2)$. Applying $\dl$ to $[L_\mu H]=(\partial+\mu)H$, similarly, we can get \eqref{dLH.L} and \eqref{dLH.H}, which leads to $\dl H=0$ consequently.

    Now we can assume that $\dl E=h(\partial,\lambda)E$, where $h(\partial,\lambda)\in\bC[\partial,\lambda]$. Applying $\dl$ to $[H_\mu E]=\tau E$, we obtain
    \begin{align}\label{dHE}
        \tau h(\partial+\mu,\lambda)=\tau h(\partial,\lambda).
    \end{align}

    If $\tau\neq 0$, because the right hand of \eqref{dHE} does not contain $\mu$, we have $h(\partial,\lambda)=h_0(\lambda)\in\bC[\lambda]$ and $\dl E=h_0(\lambda)E$. Applying $\dl$ to $[L_\mu E]=(\partial+\beta\mu+\gamma)E$, we get $\tau C_3=\lambda h_0(\lambda)$. Since $\tau\neq 0$, the equality holds if and only if $C_3=h_0(\lambda)=0$. At this time $\dl L=\dl E=0$, which shows that the conformal derivations must be inner derivations.

    If $\tau=0$, applying $\dl$ to $[L_\mu E]=(\partial+\beta\mu+\gamma)E$ again, we get
    \begin{align}\label{dLE.0}
        (\partial+\beta\mu+\gamma)h(\partial+\mu,\lambda)=(\partial+\lambda+\beta\mu+\gamma)h(\partial,\lambda).
    \end{align}
    Taking $\mu=0$ in \eqref{dLE.0}, we obtain $h(\partial,\lambda)=0$, namely $\dl E=0$. If $C_3\neq 0$, we find the conformal derivation which is not an inner derivation.

    \afCase{2} $|d|=1$.

    We can suppose that $\dl L=p(\partial,\lambda)E$, where $p(\partial,\lambda)\in\bC[\partial,\lambda]$. Applying $\dl$ to $[L_\mu L]=(\partial+2\mu)L$, we get
    \begin{align}\label{dLL.1.hvsii}
        &(\partial+\lambda+2\mu)p(\partial,\lambda)-(\partial+\beta\mu+\gamma)p(\partial+\mu,\lambda)\notag\\
        &=\big((\beta-1)\partial+\beta(\lambda+\mu)-\gamma\big)p(-\lambda-\mu,\lambda).
    \end{align}
     Let $p(\partial,\lambda)=\sum_{i=0}^np_i(\lambda)\partial^i$. Comparing the coefficients of $\partial^n$ on the both sides of \eqref{dLL.1.hvsii}, we obtain that $\big((\beta+n-2)\mu-\lambda+\gamma\big)p_n(\lambda)=0$ if $n>1$, namely $p_n(\lambda)=0$ for $n>1$. Therefore $p(\partial,\lambda)=p_0(\lambda)+p_1(\lambda)\partial$ is linear in $\partial$. Plugging this into \eqref{dLL.1.hvsii}, we can get
    \begin{align*}
        \big((1-\beta)p_0(\lambda)+(\beta\lambda-\gamma)p_1(\lambda)\big)\big((\lambda+2\mu)\partial+1\big)=0.
    \end{align*}
    This further demonstrates that
    \begin{align}\label{dLL}
        (\beta-1)p_0(\lambda)=(\beta\lambda-\gamma)p_1(\lambda).
    \end{align}

    We need to discuss the following two cases according to the value of $\beta$.

    \subno{1} If $\beta\neq 1$, then $p_0(\lambda)=\frac{\beta\lambda-\gamma}{\beta-1}p_1(\lambda)$ by \eqref{dLL}, namely $p(\partial,\lambda)=(\partial+\frac{\beta}{\beta-1}\lambda-\frac{\gamma}{\beta-1})p_1(\lambda)$. Replacing $\dl$ by $\dl-\frac{p_1(-\partial)}{\beta-1}(\ad E)_\lambda$, we have $\dl L=0$. We can suppose that $\dl H=q(\partial,\lambda)E$ and $d_\lambda E=r(\partial,\lambda)L+s(\partial,\lambda)H$, where $q(\partial,\lambda)$, $r(\partial,\lambda)$ and $s(\partial,\lambda)$ are all in $\bC[\partial,\lambda]$. Applying $\dl$ to $[L_\mu H]=(\partial+\mu)H$, we have
    \begin{align*}
        (\partial+\beta\mu+\gamma)q(\partial+\mu,\lambda)=(\partial+\lambda+\mu)q(\partial,\lambda).
    \end{align*}
    Taking $\mu=0$ in this equality, we can get $(\lambda-\gamma)q(\partial,\lambda)=0$, which means $q(\partial,\lambda)=0$ and $\dl H=0$. Applying $\dl$ to $[H_\mu E]=\tau E$, we have $\mu r(\partial+\mu,\lambda)H=\tau r(\partial,\lambda)L+\tau s(\partial,\lambda)H$. Comparing the coefficients of $L$ and $H$ respectively, we obtain that
    \begin{align}
        \tau r(\partial,\lambda)&=0,\label{dHE.L}\\
        \mu r(\partial+\mu,\lambda)&=\tau s(\partial,\lambda).\label{dHE.H}
    \end{align}
    If $\tau\neq 0$, by \eqref{dHE.L} and \eqref{dHE.H}, we have $r(\partial,\lambda)=s(\partial,\lambda)=0$, namely $\dl E=0$. If $\tau=0$, then $r(\partial,\lambda)=0$ by \eqref{dHE.H}. It is equivalent to saying that $\dl E=s(\partial,\lambda)H$. Now applying $\dl$ to $[L_\mu E]=(\partial+\beta\mu+\gamma)E$, we get
    \begin{align}\label{dLE}
        (\partial+\mu)s(\partial+\mu,\lambda)=(\partial+\lambda+\beta\mu+\gamma)s(\partial,\lambda).
    \end{align}
    Taking $\mu=0$ in \eqref{dLE}, then we obtain $(\lambda+\gamma)s(\partial,\lambda)=0$, which implies that $s(\partial,\lambda)=0$. Therefore we have $\dl E=0$ whatever $\tau$ takes.

    \subno{2} If $\beta=1$, then $p_1(\lambda)=0$ by \eqref{dLL}, namely $p(\partial,\lambda)=p_0(\lambda)$. Assume that $p_0(\lambda)=(\lambda-\gamma)p(\lambda)+p_0(\gamma)$, where $p(\lambda)\in\bC[\lambda]$. Two cases are discussed as follows.

     \subno{i} If $(\lambda-\gamma)\mid p_0(\lambda)$, we have $p_0(\gamma)=0$, namely $p_0(\lambda)=(\lambda-\gamma)p(\lambda)$. Replacing $\dl$ by $\dl-p(-\partial)(\ad E)_\lambda$, we get $\dl L=0$. Similarly, one can easily obtain that $\dl H=\dl E=0$.

    \subno{ii} If $(\lambda-\gamma)\nmid p_0(\lambda)$, replacing $\dl$ by $\dl-p(-\partial)(\ad E)_\lambda$, we can deduce that $\dl L=p_0(\gamma)E$ and $p_0(\gamma)\neq 0$. We still suppose that $\dl H=q(\partial,\lambda)E$ and $d_\lambda E=r(\partial,\lambda)L+s(\partial,\lambda)H$, where $q(\partial,\lambda)$, $r(\partial,\lambda)$ and $s(\partial,\lambda)$ are all in $\bC[\partial,\lambda]$. Now applying $\dl$ to $[H_\mu H]=0$, we can obtain that $\tau q(-\lambda-\mu,\lambda)=\tau q(\partial+\mu,\lambda)$ which means the value of $q(\partial,\lambda)$ is independent of $\partial$. So we can assume $\dl H=q(\lambda)E$, where $q(\lambda)\in\bC[\lambda]$. Applying $\dl$ to $[L_\mu H]=(\partial+\mu)H$, we get
    \begin{align}\label{dLH.d1}
        \tau p_0(\gamma)=(\gamma-\lambda)q(\lambda).
    \end{align}
    If $\tau\neq 0$, then it gives that $q(\lambda)\neq 0$ since  $p_0(\gamma)$ is nonzero. However, the degree in $\lambda$ on the right hand of \eqref{dLH.d1} must be greater than the left, obtaining a contradiction. Therefore we only need to consider the case of $\tau=0$. If $\tau=0$, applying $\dl$ to $[L_\mu H]=(\partial+\mu)H$, we obtain
    \begin{align}\label{dLE.d1.t-1}
        (\partial+\mu+\gamma)q(\partial+\mu,\lambda)=(\partial+\lambda+\mu)q(\partial,\lambda).
    \end{align}
    Letting $\mu=0$ in \eqref{dLE.d1.t-1}, we can get $(\lambda-\gamma)q(\partial,\lambda)=0$, which implies that $q(\partial,\lambda)=0$ and $\dl H=0$. Applying $\dl$ to $[H_\mu E]=0$, we have $\mu r(\partial+\mu,\lambda)H=0$, namely $r(\partial,\lambda)=0$. It leads to $\dl E=s(\partial,\lambda)H$. Applying $\dl$ to $[L_\mu E]=(\partial+\mu+\gamma)E$, we get
    \begin{align}\label{dLE.d1.t0}
        (\partial+\mu)s(\partial+\mu,\lambda)=(\partial+\lambda+\mu+\gamma)s(\partial,\lambda).
    \end{align}
    Taking $\mu=0$ in \eqref{dLE.d1.t0}, we can conclude that $(\lambda+\gamma)s(\partial,\lambda)=0$ and $\dl E=0$. Therefore we find a conformal derivation which is not an inner derivation. Due to the arbitrariness of $p_0(\lambda)$ in $\bC[\lambda]$, we can take $p_0(\gamma)=C_4\in\bCx$.
\end{proof}

The following conclusions can be easily obtained by referring to the proof of Theorems~\ref{hvs.d} and \ref{hvsii.d}.
\begin{corollary}
    Let $R$ be a Lie conformal superalgebra.

    \subno{1} If $R$ is isomorphic to the Lie conformal superalgebra $R_4$ in Proposition~\ref{rank1.1}, then
    \begin{align*}
        \CDer(R)=\CInn(R)\oplus\delta_{\beta,1}D_\op,
    \end{align*}
    where $D_\op=\{\dl\in\CDer(R)_\op\ |\ \dl x=cy,\ \dl y=0,\ c\in\bCx\}$.

    \subno{2} If $R$ is isomorphic to the Lie conformal superalgebra $R_5$ in Proposition~\ref{rank1.1}, then
    \begin{align*}
        \CDer(R)=\CInn(R).
    \end{align*}
\end{corollary}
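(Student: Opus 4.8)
The plan is to recycle the arguments of Theorems~\ref{hvs.d} and \ref{hvsii.d}, using that $R_4$ and $R_5$ are, respectively, the $\cp$-subalgebras of $\hvsii$ and $\hvs$ obtained by deleting the Heisenberg generator $H$. Writing $x$ for $L$ and $y$ for $E$ (resp.\ $G$), the $\lambda$-bracket of $R_4$ is that of $\hvsii$ restricted to $\cp L\oplus\cp E$ (the relation with $\tau$ being vacuous, as there is no $H$), and the $\lambda$-bracket of $R_5$ is that of $\hvs$ restricted to $\cp L\oplus\cp G$, i.e.\ the member of the $R_5$-family with $(\beta,\gamma)=(\frac32,0)$ and an arbitrary $\alpha\in\bCx$. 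In both cases I would split a conformal derivation $d$ according to its parity.

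First I would treat $R_4$. For $|d|=\ep$, degree forces $d_\lambda x=f_1(\partial,\lambda)x$ and $d_\lambda y=h(\partial,\lambda)y$; applying $d$ to $[x_\mu x]=(\partial+2\mu)x$ reproduces the reasoning after \eqref{dLL.L}, so $f_1$ is linear in $\partial$, and subtracting the appropriate multiple of $(\ad x)_\lambda$ kills its $\partial$-coefficient (only $(\ad x)_\lambda$ is available, which suffices since $d_\lambda x$ has no component outside $\cp x$); applying $d$ to $[x_\mu x]$ once more kills the remaining constant, so $d_\lambda x=0$. Then $[x_\mu y]=(\partial+\beta\mu+\gamma)y$ becomes \eqref{dLE.0} and forces $h=0$, while $[y_\mu y]=0$ adds nothing, so every even conformal derivation of $R_4$ is inner. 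For $|d|=\op$, write $d_\lambda x=p(\partial,\lambda)y$ and $d_\lambda y=r(\partial,\lambda)x$; applying $d$ to $[x_\mu x]=(\partial+2\mu)x$ gives exactly \eqref{dLL.1.hvsii} and \eqref{dLL}, so $p$ is $\partial$-linear with $(\beta-1)p_0(\lambda)=(\beta\lambda-\gamma)p_1(\lambda)$. If $\beta\neq1$, subtracting $\frac{p_1(-\partial)}{\beta-1}(\ad y)_\lambda$ gives $d_\lambda x=0$, and then $[x_\mu y]=(\partial+\beta\mu+\gamma)y$ forces $r=0$, so $d$ is inner. If $\beta=1$, then $p_1=0$, and subtracting $p(-\partial)(\ad y)_\lambda$ with $p_0(\lambda)=(\lambda-\gamma)p(\lambda)+p_0(\gamma)$ leaves $d_\lambda x=p_0(\gamma)y$; applying $d$ to $[x_\mu y]=(\partial+\mu+\gamma)y$ then forces $r=0$, and $[y_\mu y]=0$ is automatically satisfied, so for each constant $c=p_0(\gamma)$ the map $d_\lambda x=cy$, $d_\lambda y=0$ satisfies \eqref{derivation} on all pairs of generators. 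It is non-inner whenever $c\neq0$: by skew-symmetry $(\ad y)_\lambda x=(\lambda-\gamma)y$, so any inner odd derivation sends $x$ into a $\bC[\lambda]$-multiple of $(\lambda-\gamma)y$, never a nonzero constant multiple of $y$. This produces the summand $\delta_{\beta,1}D_\op$ and hence part~(1).

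For $R_5$ the same scheme runs more quickly. In degree $\ep$, Case~1 of Theorem~\ref{hvs.d} still applies, but since $R_5$ has no Heisenberg direction there is no candidate $cH$ to keep, so after normalizing by $(\ad x)_\lambda$ one obtains $d_\lambda x=0$; then $[x_\mu y]=(\partial+\frac32\mu)y$ forces $d_\lambda y=0$, and $[y_\mu y]=\alpha x$ is then automatic, so every even conformal derivation of $R_5$ is inner. In degree $\op$, since $\beta=\frac32\neq1$, the $\beta\neq1$ analysis above applies verbatim: $d_\lambda x=p(\partial,\lambda)y$ reduces to $d_\lambda x=0$ after subtracting $2p_1(-\partial)(\ad y)_\lambda$, then $[x_\mu y]=(\partial+\frac32\mu)y$ forces $d_\lambda y=0$, with $[y_\mu y]=\alpha x$ again consistent. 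Therefore $\CDer(R_5)=\CInn(R_5)$, which is part~(2).

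None of these steps is genuinely delicate; the one place requiring real care is the $\beta=1$ branch of $R_4$, where one must verify simultaneously that the normalized map $d_\lambda x=cy$, $d_\lambda y=0$ is an honest conformal derivation and that it fails to be inner. This is the sole source of a nontrivial summand, which is exactly why the factor $\delta_{\beta,1}$ appears.
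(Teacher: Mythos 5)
Your proposal is correct and follows exactly the route the paper intends: the corollary is stated as a byproduct of the proofs of Theorems~\ref{hvs.d} and \ref{hvsii.d}, and you rerun those computations with the $H$-direction deleted, including the only delicate point (the $\beta=1$ branch yielding the non-inner odd derivation $\dl x=cy$, $\dl y=0$, which cannot be inner since $(\ad y)_\lambda x$ is a multiple of $(\lambda-\gamma)y$). No gaps; this matches the paper's argument in substance and detail.
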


\section{Conformal biderivations}\label{sect:conformal biderivation}
\hspace{1.5em}In this section, we firstly introduce the definition of conformal biderivations and an important lemma, one can refer to \cite{H} for more details. Then we determine the conformal biderivations of $\hvs$ and $\hvsii$, respectively.
\begin{definition}
    Let $R$ be a Lie conformal superalgebra. A conformal bilinear map $\varphi_\lambda: R\times R\rightarrow R[\lambda]$ is called a \textit{conformal biderivation} if for any $a, b, c\in R$, it satisfies the following relations:
    \begin{align*}
        \varphi_\lambda(a,b)&=-(-1)^{|a||b|}\varphi_{-\partial-\lambda}(b,a),\\
        \varphi_\lambda(a,[b_\mu c])&=[\varphi_\lambda(a,b)_{\lambda+\mu}c]+(-1)^{|a||b|}[b_\mu\varphi_\lambda(a,c)].
    \end{align*}
    For any $a, b\in R$, if there exists a fixed complex number $\varepsilon$ such that $\varphi_\lambda^\varepsilon(a,b)=\varepsilon[a_\lambda b]$, then we call $\varphi_\lambda^\varepsilon$ an \textit{inner conformal biderivation} of $R$. The space of all conformal biderivations and inner conformal biderivations of $R$ are denoted by $\BDer(R)$ and $\BInn(R)$, respectively.
\end{definition}
\begin{lemma}\label{lemma:bider}\textup{(\cite{H})}
    Let $\varphi_\lambda$ be a conformal biderivation of a Lie conformal superalgebra $R$.

    \subno{1} For any $a,b,c,d\in R$,
    \begin{align}\label{bider}
    [\varphi_\lambda(a,b)_{\lambda+\nu}[c_\mu d]]=[[a_\lambda b]_{\lambda+\nu}\varphi_\mu(c,d)].
    \end{align}

    \subno{2} For any $a,b\in R$, if $[a_\lambda b]=0$, then $\varphi_\lambda(a,b)\in Z[R_\lambda R]$, where $Z[R_\lambda R]$ is the center of $[R_\lambda R]$.
\end{lemma}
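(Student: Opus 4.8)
The plan is to derive part~(2) from part~(1) and to put the real work into establishing \eqref{bider}. For~(2): if $[a_\lambda b]=0$, then the right-hand side of \eqref{bider} equals $[0_{\lambda+\nu}\varphi_\mu(c,d)]=0$ for all $c,d\in R$ and all $\mu,\nu$, so \eqref{bider} forces $[\varphi_\lambda(a,b)_{\lambda+\nu}[c_\mu d]]=0$ identically. As $c,d$ range over $R$ and $\mu$ varies, the coefficients of the $[c_\mu d]$ span $[R_\lambda R]$ over $\cp$, and by conformal sesquilinearity \eqref{conformal-sesqui} this propagates to all $\cp$-combinations; hence $\varphi_\lambda(a,b)$ $\lambda$-commutes with every element of $[R_\lambda R]$, i.e. $\varphi_\lambda(a,b)\in Z[R_\lambda R]$.

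For~(1), I would first observe that a conformal biderivation is automatically a derivation in its first argument as well: feeding the skew-symmetry of $\varphi$ into its given derivation property in the second argument, and using \eqref{skew-symm} for $R$, produces an identity expressing $\varphi_\lambda([a_\mu b],c)$ in terms of $[(\varphi_\lambda(a,c))_{\,\cdot\,}b]$ and $[a_\mu(\varphi_\lambda(b,c))]$ with the correct spectral shifts and signs. The heart of the proof is then to expand $\varphi_\sigma\big([a_\lambda b]\,,\,[c_\mu d]\big)$ in two different orders --- opening the left-hand bracket first versus opening the right-hand bracket first --- and to equate the two results. After using the Jacobi identity of $R$ to amalgamate nested double brackets in pairs, every term whose only $\varphi$-value is $\varphi_\sigma(a,b)$ or $\varphi_\sigma(c,d)$ cancels, and what survives is a relation purely among the four ``mixed'' values $\varphi_\sigma(a,c),\varphi_\sigma(a,d),\varphi_\sigma(b,c),\varphi_\sigma(b,d)$ and $\lambda$-brackets of $a,b,c,d$; this is a relabeling symmetry of the bilinear-in-$\varphi$-values quantity that must ultimately be shown to vanish.

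To finish, I would extract \eqref{bider} from this relation as in the classical argument for biderivations of Lie algebras: specialize so that two of the entries coincide, which collapses one mixed value to a diagonal one $\varphi_\sigma(e,e)$ (constrained by \eqref{skew-symm} via $\varphi_\sigma(e,e)=-(-1)^{|e|}\varphi_{-\partial-\sigma}(e,e)$); then re-polarize to obtain a second independent symmetry, and combine it with the first symmetry and the manifest skew-symmetries to force the quantity --- hence $[\varphi_\lambda(a,b)_{\lambda+\nu}[c_\mu d]]-[[a_\lambda b]_{\lambda+\nu}\varphi_\mu(c,d)]$ --- to be zero. I expect the difficulty to be essentially bookkeeping rather than conceptual: keeping the three spectral parameters consistent through the nested $\lambda$-brackets, controlling the $-\partial-\sigma$ shifts introduced every time skew-symmetry swaps the two slots of $\varphi$, and tracking all the $(-1)^{|\cdot||\cdot|}$ signs; in the super/conformal setting the specialization step in particular needs more care than over a field, since $\varphi_\sigma(e,e)$ need not vanish for odd $e$, so the even and odd cases may have to be treated separately.
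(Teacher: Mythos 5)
First, a point of comparison: the paper does not prove this lemma at all --- it is quoted from \cite{H} --- so there is no in-paper argument to measure you against, and I am judging the proposal on its own merits. Your derivation of part~(2) from part~(1) is the natural one and is fine, with the understanding (implicit in how the paper later applies (2)) that ``the center of $[R_\lambda R]$'' is read as the centralizer of the $\cp$-span of all bracket coefficients, so that membership of $\varphi_\lambda(a,b)$ in $[R_\lambda R]$ itself is not at issue.

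For part~(1) there is a genuine gap, and it sits exactly at the step you dismiss as bookkeeping. Expanding $\varphi_\sigma([a_\lambda b],[c_\mu d])$ in the two orders, as you propose, yields a relation involving \emph{only} the four mixed values $\varphi(a,c),\varphi(a,d),\varphi(b,c),\varphi(b,d)$: the two terms you actually need, $[\varphi(a,b)_{\lambda+\nu}[c_\mu d]]$ and $[[a_\lambda b]_{\lambda+\nu}\varphi_\mu(c,d)]$, never occur in it. To make them appear one must cross the pairing and expand $\varphi_\sigma([a_\lambda c],[b_\mu d])$; the resulting identity is then the desired one \emph{plus two mixed terms}, and these do not cancel by relabelling or by the manifest skew-symmetries alone (already in the ordinary Lie-algebra case the symmetry constraints obtained this way are consistent with a nonzero defect). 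The proof is completed only by an extra input: specialize two slots to coincide and use $[e_\lambda e]=0$ together with $\varphi_\lambda(e,e)=0$ to show the defect $T(a,b;c,d):=[\varphi(a,b)_{\lambda+\nu}[c_\mu d]]-[[a_\lambda b]_{\lambda+\nu}\varphi_\mu(c,d)]$ vanishes on a partial diagonal, polarize to obtain an antisymmetry of $T$ in two slots, and then feed that antisymmetry back into the crossed-pairing relation to conclude $2T=0$. None of this is carried out in your proposal, and it is precisely where the argument breaks in the super setting: for odd $e$ neither $[e_\lambda e]$ nor $\varphi_\lambda(e,e)$ need vanish (in $\hvs$ itself $[G_\lambda G]=2H$), so the specialization no longer gives vanishing on the diagonal but only a relation among diagonal values of $T$; and the odd case is exactly the one the paper uses the lemma for ($\varphi_\lambda(G,G)$, $\varphi_\lambda(H,G)$, $\varphi_\lambda(E,E)$). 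Until you supply this combination step, with the $\lambda$-shifts and Koszul signs tracked and with an argument that covers odd entries, \eqref{bider} has not been proved.
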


\begin{lemma}\label{bider.hvs}
    For the Lie conformal superalgebra \hvs, its conformal biderivation $\varphi_\lambda$ has the following forms:
    \begin{align*}
        \varphi_\lambda(L,L)=C(\partial+2\lambda)L,&\ \varphi_\lambda(L,H)=C(\partial+\lambda)H,\ \varphi_\lambda(L,G)=C(\partial+\lambda)G,\\
        \varphi_\lambda(G,G)&=2CH,\ \varphi_\lambda(H,H)=\varphi_\lambda(H,G)=0,
    \end{align*}
    where $C\in\bC$.
\end{lemma}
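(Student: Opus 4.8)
The plan is to determine a conformal biderivation $\varphi_\lambda$ of $\hvs$ piece by piece, starting from the subalgebra relations already forced on the even part by the structure of $\hv$, and propagate to the odd generator $G$ using the super Jacobi-type identity in the definition together with Lemma~\ref{lemma:bider}. First I would restrict $\varphi_\lambda$ to the even part $\hv$: since $\hv$ contains $\mathrm{Vir}=\cp L$, the map $a\mapsto\varphi_\lambda(L,a)$ is a conformal derivation of $\hv$, and similarly $b\mapsto\varphi_\lambda(b,L)$ via skew-symmetry. Writing $\varphi_\lambda(L,L)=f_1(\partial,\lambda)L+f_2(\partial,\lambda)H$, the biderivation axiom applied with $[L_\mu L]=(\partial+2\mu)L$ gives exactly the same functional equations as \eqref{dLL.L} and \eqref{dLL.H} in the proof of Theorem~\ref{hvs.d}, so $f_1$ is linear in $\partial$ and $f_2$ depends only on $\lambda$. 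The skew-symmetry constraint $\varphi_\lambda(L,L)=-\varphi_{-\partial-\lambda}(L,L)$ then pins down $f_1(\partial,\lambda)=C(\partial+2\lambda)$ for a scalar $C$ (using $|L||L|=0$) and forces $f_2=0$ after a further use of the biderivation identity in the second slot; this gives $\varphi_\lambda(L,L)=C(\partial+2\lambda)L$. Feeding this back, the derivation-in-second-slot property $\varphi_\lambda(L,[L_\mu H])=[\varphi_\lambda(L,L)_{\lambda+\mu}H]+[L_\mu\varphi_\lambda(L,H)]$ together with $[L_\mu H]=(\partial+\mu)H$ yields $\varphi_\lambda(L,H)=C(\partial+\lambda)H$, and $\varphi_\lambda(H,H)=0$ follows since $[H_\lambda H]=0$ combined with part (2) of Lemma~\ref{lemma:bider} and the fact that the center of $[\hvs_\lambda\hvs]$ forces the relevant coefficient to vanish.

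Next I would handle the slots involving $G$. Write $\varphi_\lambda(L,G)=p(\partial,\lambda)G$ (it must land in $\hvs_\op$ since $|L|+|G|=\op$) and $\varphi_\lambda(G,G)=d(\partial)L+e(\partial)H$ (the $\partial$-independence of the bracket arguments in $[G_\lambda G]=2H$ and skew-symmetry force the coefficients to be functions of $\partial$ only, exactly as in the proof of Proposition~\ref{classification}). Applying the biderivation axiom to $\varphi_\lambda(L,[L_\mu G])=[L_\mu G]$-relation gives a functional equation for $p$ of the same shape as \eqref{dLH.1}, forcing $p(\partial,\lambda)=p_0(\lambda)$, and then the cross-identity $\varphi_\lambda(L,[G_\mu G])=[\varphi_\lambda(L,G)_{\lambda+\mu}G]+[G_\mu\varphi_\lambda(L,G)]$ relates $p_0$ to the coefficient of $\varphi_\lambda(G,G)$ and forces $p_0(\lambda)=C\lambda+\text{const}$, with the constant killed by skew-symmetry of the diagonal term. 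The key structural input is Lemma~\ref{lemma:bider}(1): applying \eqref{bider} with $(a,b)=(G,G)$ and $(c,d)=(L,L)$ gives $[\varphi_\lambda(G,G)_{\lambda+\nu}(\partial+2\mu)L]=[(2H)_{\lambda+\nu}\varphi_\mu(L,L)]=[2H_{\lambda+\nu}\,C(\partial+2\mu)L]$, and since $[H_\lambda L]=\lambda H$ by skew-symmetry this determines $\varphi_\lambda(G,G)=2CH$ with the same scalar $C$; in particular $d(\partial)=0$. Finally $\varphi_\lambda(H,G)=0$ comes from $[H_\mu G]=0$ via Lemma~\ref{lemma:bider}(2) together with the computation that the center of $[\hvs_\lambda\hvs]$ meets $\cp G$ trivially.

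The main obstacle I anticipate is bookkeeping the consistency of the single scalar $C$ across all five bracket values: a priori each slot could introduce its own constant, and one must check that the cross identities (especially \eqref{bider} applied to the various pairs, and the biderivation axiom relating $\varphi_\lambda(L,G)$ to $\varphi_\lambda(G,G)$) genuinely force all of them to coincide rather than merely constraining them up to independent scalars. A secondary subtlety is that skew-symmetry for the odd-odd slot reads $\varphi_\lambda(G,G)=-(-1)^{|G||G|}\varphi_{-\partial-\lambda}(G,G)=-\varphi_{-\partial-\lambda}(G,G)$ because $|G||G|=1$, so one must be careful with signs: this is precisely why $\varphi_\lambda(G,G)$ can be a nonzero \emph{even}-valued expression $2CH$ (an odd number of odd arguments would change the analysis), and why the identification of its $\partial$-dependence needs the skew-symmetry to cut down $e(\partial)$ to a constant. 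Once the scalar $C$ is shown to be common, the stated forms follow by collecting the computations above.
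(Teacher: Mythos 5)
There is a genuine gap: you assume from the outset that $\varphi_\lambda$ is parity-preserving, writing $\varphi_\lambda(L,L)=f_1L+f_2H$, $\varphi_\lambda(L,G)=p(\partial,\lambda)G$ and $\varphi_\lambda(G,G)=d(\partial)L+e(\partial)H$ with no component in the ``wrong'' parity. The paper's definition of a conformal biderivation imposes no such homogeneity, and this is not a harmless convention: for $\hvsii$ with $\beta=2,\gamma=0,\tau=0$ the even-even value $\varphi_\lambda(L,L)$ does acquire a nonzero odd component $\bar{C}(\partial+2\lambda)E$, and these are precisely the non-inner biderivations of Theorem~\ref{thm:biderivation}(2) (see Lemma~\ref{bider.hvsii}). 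So for $\hvs$ you must actually write $\varphi_\lambda(L,L)=f_{L,L}L+g_{L,L}H+h_{L,L}G$ (and similarly allow even components in $\varphi_\lambda(L,G)$, a $G$-component in $\varphi_\lambda(G,G)$, etc.) and prove that the extra components vanish; the paper does this via the functional equation \eqref{LLLL.G} coming from \eqref{bider} with $a=b=c=d=L$, and the vanishing hinges on the fact that $[L_\lambda G]=(\partial+\lambda)G$ rather than $(\partial+2\lambda)G$. Your machinery (the second biderivation axiom, i.e.\ $\varphi_\lambda(L,\cdot)$ is a conformal derivation, plus skew-symmetry) can be made to kill these components, but as written the proposal simply never addresses them, and this is exactly the delicate point of the lemma.

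A second, smaller problem is the sign in the odd-odd skew-symmetry: since $|G||G|=1$, the axiom gives $\varphi_\lambda(G,G)=-(-1)^{1}\varphi_{-\partial-\lambda}(G,G)=+\varphi_{-\partial-\lambda}(G,G)$, which is what forces the coefficients of $\varphi_\lambda(G,G)$ to depend on $\partial$ alone (as for $[Y_\lambda Y]$ in Proposition~\ref{classification}). You state the opposite sign, $\varphi_\lambda(G,G)=-\varphi_{-\partial-\lambda}(G,G)$, which would force the constant coefficient of $H$ to vanish and contradict the target value $2CH$; so this must be corrected for your argument to close. Apart from these two points your overall route is sound and close to the paper's: both rest on Lemma~\ref{lemma:bider} (part (2) for $\varphi_\lambda(H,H)=\varphi_\lambda(H,G)=0$, part (1) to tie every slot to the single constant $C$ appearing in $\varphi_\lambda(L,L)$), with your variant leaning somewhat more on the derivation-in-second-slot axiom where the paper uses \eqref{bider} uniformly with varying choices of $a,b,c,d$.
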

\begin{proof}
    Let $\varphi_\lambda$ be a conformal biderivation of \hvs. Since the center of $[\hvs_\lambda\hvs]$ is trivial, by Lemma~\ref{lemma:bider}(2), we have $\varphi_\lambda(H,H)=\varphi_\lambda(H,G)=0$.

    We can assume that
    \begin{align*}
        \varphi_\lambda(L,L)=f_{L,L}(\partial,\lambda)L+g_{L,L}(\partial,\lambda)H+h_{L,L}(\partial,\lambda)G,
    \end{align*}
    where $f_{L,L}(\partial,\lambda),g_{L,L}(\partial,\lambda)$ and $h_{L,L}(\partial,\lambda)$ are all in $\bC[\partial,\lambda]$. Taking $a=b=c=d=L$ in \eqref{bider} and comparing the coefficients of $L, H, G$ respectively, we obtain
    \begin{align}
        (\partial+\lambda+\nu+2\mu)f_{L,L}(-\lambda-\nu,\lambda)&=(\lambda-\nu)f_{L,L}(\partial+\lambda+\nu,\mu),\label{LLLL.L}\\
        (\lambda+\nu)(\partial+\lambda+\nu+2\mu)g_{L,L}(-\lambda-\nu,\lambda)&=(\lambda-\nu)(\partial+\lambda+\nu)g_{L,L}(\partial+\lambda+\nu,\mu),\label{LLLL.H}\\
        (\lambda+\nu)(\partial+\lambda+\nu+2\mu)h_{L,L}(-\lambda-\nu,\lambda)&=(\lambda-\nu)(\partial+\lambda+\nu)h_{L,L}(\partial+\lambda+\nu,\mu).\label{LLLL.G}
    \end{align}

    Firstly, by comparing the degree in $\partial$ on the both sides of \eqref{LLLL.L}, we know that the degree in $\partial$ of $f_{L,L}(\partial,\lambda)$ cannot be greater than 1. Thus $f_{L,L}(\partial,\lambda)=f_0(\lambda)+f_1(\lambda)\partial$ is linear in $\partial$, where $f_0(\lambda),f_1(\lambda)\in\bC[\lambda]$. Substitute it into \eqref{LLLL.L}, we can deduce that
    \begin{align*}
        (\partial+\lambda+\nu+2\mu)\big(f_0(\lambda)+f_1(\lambda)(-\lambda-\nu)\big)=(\lambda-\nu)\big(f_0(\mu)+f_1(\mu)(\partial+\lambda+\nu)\big).
    \end{align*}
    Comparing its coefficients of $\partial$, we have
    \begin{align}\label{LLLL.L.2}
        f_0(\lambda)-(\lambda+\nu)f_1(\lambda)=(\lambda-\nu)f_1(\mu).
    \end{align}
    Putting $\lambda=\nu$ in \eqref{LLLL.L.2}, we get $f_0(\nu)=2\nu f_1(\nu)$. Plugging this into \eqref{LLLL.L.2}, we obtain $(\lambda-\nu)(f_1(\lambda)-f_1(\mu))=0$, which means $f_1(\lambda)=C$, where $C\in\bC$ is a constant. Therefore $f_{L,L}(\partial,\lambda)=C(\partial+2\lambda)$.

    Then, taking $\mu=0$ in \eqref{LLLL.H}, we get
    \begin{align}\label{LLLL.H.1}
        (\lambda+\nu)g_{L,L}(-\lambda-\nu,\lambda)=(\lambda-\nu)g_{L,L}(\partial+\lambda+\nu,0).
    \end{align}
    Comparing the degree in $\partial$ on the both sides of \eqref{LLLL.H.1}, we have $g_{L,L}(\partial,\lambda)=g(\lambda)\in\bC[\lambda]$. This together with \eqref{LLLL.H.1}, shows that $g_{L,L}(\partial,\lambda)=0$. Similarly, it follows from \eqref{LLLL.G} that $h_{L,L}(\partial,\lambda)=0$. Therefore $\varphi_\lambda(L,L)=C(\partial+2\lambda)L$.

    Now, we assume
    \begin{align*}
        \varphi_\lambda(L,H)=f_{L,H}(\partial,\lambda)L+g_{L,H}(\partial,\lambda)H+h_{L,H}(\partial,\lambda)G,
    \end{align*}
    where $f_{L,H}(\partial,\lambda),g_{L,H}(\partial,\lambda)$ and $h_{L,H}(\partial,\lambda)$ are all in $\bC[\partial,\lambda]$. Taking $a=b=c=L$ and $d=H$ in \eqref{bider} and comparing the coefficients of $L, G, H$ respectively, we get $f_{L,H}(\partial,\lambda)=h_{L,H}(\partial,\lambda)=0$ and $g_{L,H}(\partial+\lambda+\nu,\mu)=C(\partial+\lambda+\nu+\mu)$, which implies $g_{L,H}(\partial,\lambda)=C(\partial+\lambda)$. Thus $\varphi_\lambda(L,H)=C(\partial+\lambda)H$. Similarly, we can derive that $\varphi_\lambda(L,G)=C(\partial+\lambda)G$.

     Finally, we discuss the case of $\varphi_\lambda(G,G)$. Suppose that
     \begin{align*}
         \varphi_\lambda(G,G)=f_{G,G}(\partial,\lambda)L+g_{G,G}(\partial,\lambda)H+h_{G,G}(\partial,\lambda)G,
     \end{align*}
      where $f_{G,G}(\partial,\lambda),g_{G,G}(\partial,\lambda)$ and $h_{G,G}(\partial,\lambda)$ are all in $\bC[\partial,\lambda]$. Taking $a=b=L$ and $c=d=G$ in \eqref{bider} and comparing the coefficients of $L, G, H$ respectively, we obtain $f_{G,G}(\partial,\lambda)=h_{G,G}(\partial,\lambda)=0$ and $g_{G,G}(\partial+\lambda+\nu,\mu)=2C$. It follows that $g_{G,G}(\partial,\mu)=2C$. Therefore we have $g_{G,G}(\partial,\lambda)=2C$ and $\varphi_\lambda(G,G)=2CH$.
\end{proof}

\begin{lemma}\label{bider.hvsii}
     For the Lie conformal superalgebra \hvsii, its conformal biderivation $\varphi_\lambda$ has the following forms:
    \begin{align*}
        \varphi_\lambda(L,L)&=C(\partial+2\lambda)L+\delta_{\beta,2}\delta_{\gamma,0}\delta_{\tau,0}\bar{C}(\partial+2\lambda)E,\\
        \varphi_\lambda(L,H)&=C(\partial+\lambda)H,\ \ \varphi_\lambda(L,E)=C(\partial+\beta\lambda+\gamma)E,\\
        \varphi_\lambda(H,E)&=C\tau E,\ \ \varphi_\lambda(H,H)=\varphi_\lambda(E,E)=0,
    \end{align*}
    where $C,\bar{C}\in\bC$.
\end{lemma}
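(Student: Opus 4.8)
The plan is to mirror the structure of the proof of Lemma~\ref{bider.hvs}, working component by component through the pairs $(L,L)$, $(L,H)$, $(L,E)$, $(H,E)$, while tracking how the extra parameters $\beta,\gamma,\tau$ enter. First I would invoke Lemma~\ref{lemma:bider}(2): since $[H_\lambda H]=[E_\lambda E]=0$ and the center of $[\hvsii_\lambda\hvsii]$ is easily checked to be trivial (it is spanned by elements of the form $p(\partial)H$ only when $\tau=0$, but those are not central in $[\hvsii_\lambda\hvsii]$ because $[H_\lambda E]=\tau E$; and when $\tau=0$ one checks directly the center is still zero because $H\in[L_\lambda H]$), we get $\varphi_\lambda(H,H)=\varphi_\lambda(E,E)=0$. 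The one genuinely new feature compared with $\hvs$ is that $G$ is replaced by $E$ with the nonstandard weight $(\partial+\beta\lambda+\gamma)$ and the Heisenberg action $[H_\lambda E]=\tau E$, so the Jacobi-type identity \eqref{bider} acquires these parameters and the analysis of $\varphi_\lambda(L,L)$ will split according to whether $(\beta,\gamma,\tau)=(2,0,0)$.

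Next I would treat $\varphi_\lambda(L,H)$ and $\varphi_\lambda(L,E)$. Writing $\varphi_\lambda(L,H)=f(\partial,\lambda)L+g(\partial,\lambda)H+h(\partial,\lambda)E$ and applying \eqref{bider} with $a=b=c=L$, $d=H$, comparison of the $L$- and $E$-coefficients kills $f$ and $h$ (because the weights $\partial+2\lambda$, $\partial+\lambda$, $\partial+\beta\lambda+\gamma$ are distinct as polynomials when acting on distinct generators, exactly as in Lemma~\ref{bider.hvs}), and the $H$-coefficient forces $g(\partial,\lambda)=C(\partial+\lambda)$, where $C$ is the constant already extracted from $\varphi_\lambda(L,L)$ via the same argument as \eqref{LLLL.L.2}. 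The case $\varphi_\lambda(L,E)$ is analogous but uses \eqref{bider} with $a=b=c=L$, $d=E$; here the surviving coefficient of $E$ must satisfy a functional equation whose solution is $C(\partial+\beta\lambda+\gamma)$ — the presence of $\gamma$ as a constant shift is harmless because the equation only ever compares shifted arguments. For $\varphi_\lambda(H,E)$ I would use \eqref{bider} with $a=b=L$, $c=H$, $d=E$ (or directly the biderivation axiom on $[L_\mu E]$): the $L$- and $H$-coefficients vanish and the $E$-coefficient is forced to be the constant $C\tau$.

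The main obstacle, and the only place the argument really departs from the $\hvs$ computation, is $\varphi_\lambda(L,L)$ itself. Writing $\varphi_\lambda(L,L)=f_{L,L}L+g_{L,L}H+h_{L,L}E$, the $L$- and $H$-parts go through verbatim as in \eqref{LLLL.L}--\eqref{LLLL.H} to give $f_{L,L}=C(\partial+2\lambda)$ and $g_{L,L}=0$. The $E$-component $h_{L,L}(\partial,\lambda)$ now satisfies a functional equation of the shape
\begin{align*}
(\lambda+\nu)\big((\beta-1)(\partial+\lambda+\nu)+\beta\mu+\gamma\big)h_{L,L}(-\lambda-\nu,\lambda)=(\lambda-\nu)(\partial+\lambda+\nu)h_{L,L}(\partial+\lambda+\nu,\mu),
\end{align*}
together with a second constraint coming from $[H_\mu E]=\tau E$, namely $\tau h_{L,L}(\partial,\lambda)=0$ after suitable substitution. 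The hard part is the bookkeeping: a degree-in-$\partial$ count forces $h_{L,L}$ to be low-degree, and then matching coefficients shows the only nonzero solutions occur precisely when the weight $(\partial+\beta\lambda+\gamma)$ of $E$ coincides with that of $L$, i.e.\ $\beta=2$ and $\gamma=0$, \emph{and} the Heisenberg twist vanishes, i.e.\ $\tau=0$; in that case $h_{L,L}=\bar C(\partial+2\lambda)$ for an arbitrary constant $\bar C$, and otherwise $h_{L,L}=0$. I expect this single case distinction — isolating exactly the triple $(\beta,\gamma,\tau)=(2,0,0)$ and verifying that the resulting $\varphi_\lambda$ is genuinely consistent with all instances of \eqref{bider} — to be the crux of the proof; everything else is a routine adaptation of Lemma~\ref{bider.hvs}.
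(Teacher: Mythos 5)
Your overall strategy coincides with the paper's: kill $\varphi_\lambda(H,H)$ and $\varphi_\lambda(E,E)$ via Lemma~\ref{lemma:bider}(2), determine the remaining components from \eqref{bider} with suitable choices of $a,b,c,d$, and locate the exceptional solution in the $E$-component of $\varphi_\lambda(L,L)$ when the weight of $E$ matches that of $L$. However, two points in your sketch are wrong as stated. First, your displayed functional equation for $h_{L,L}$ is not what \eqref{bider} with $a=b=c=d=L$ actually produces; the correct $E$-component equation is \eqref{LLLL.E}, namely $\big((\beta-1)\partial+\beta(\lambda+\nu)-\gamma\big)(\partial+\lambda+\nu+2\mu)\,h_{L,L}(-\lambda-\nu,\lambda)=(\lambda-\nu)\big(\partial+\beta(\lambda+\nu)+\gamma\big)h_{L,L}(\partial+\lambda+\nu,\mu)$, where the left factor comes from $[E_{\lambda+\nu}L]=\big((\beta-1)\partial+\beta(\lambda+\nu)-\gamma\big)E$ and the right factor from $[L_{\lambda+\nu}E]$. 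The overall factor $(\lambda+\nu)$ on your left-hand side and the factor $(\partial+\lambda+\nu)$ on your right-hand side belong to the $H$-component equation \eqref{LLLL.H}, not to the $E$-component. This is not a cosmetic slip: your equation does not admit $h_{L,L}=\bar C(\partial+2\lambda)$ even when $\beta=2$, $\gamma=0$, so followed literally it would fail to produce the exceptional biderivation the lemma asserts.

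Second, the factor $\delta_{\tau,0}$ does not arise from a constraint of the form $\tau h_{L,L}(\partial,\lambda)=0$ obtainable within the $(L,L)$ analysis; no instance of \eqref{bider} yields that in isolation. In the paper it appears in the $(L,H)$ step: since $[E_{\lambda+\nu}H]=-\tau E$, the $E$-component of \eqref{bider} with $a=b=c=L$, $d=H$ couples $\tau h_{L,L}$ to $h_{L,H}$ and gives \eqref{LLLH.E}, $(\partial+2\lambda+2\nu)h_{L,H}(\partial+\lambda+\nu,\mu)=-\tau\bar C(\partial+\lambda+\nu+\mu)$, from which a degree argument forces both $h_{L,H}=0$ and $\tau\bar C=0$. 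Your $(L,H)$ paragraph claims the $E$-coefficient is killed ``exactly as in Lemma~\ref{bider.hvs}'', which glosses over precisely the place where $\tau=0$ is extracted as a necessary condition for $\bar C\neq 0$. With these two repairs (the corrected equation \eqref{LLLL.E} and the $\tau\bar C=0$ mechanism in the $(L,H)$ component), your outline matches the paper's proof.
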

\begin{proof}
     Let $\varphi_\lambda$ be a conformal biderivation of \hvsii. $Z[\hvsii_\lambda\hvsii]$ is trivial, so we have $\varphi_\lambda(H,H)=\varphi_\lambda(E,E)=0$ by Lemma~\ref{lemma:bider}(2).

     Assume that
    \begin{align*}
        \varphi_\lambda(L,L)=f_{L,L}(\partial,\lambda)L+g_{L,L}(\partial,\lambda)H+h_{L,L}(\partial,\lambda)E,
    \end{align*}
    where $f_{L,L}(\partial,\lambda),g_{L,L}(\partial,\lambda)$ and $h_{L,L}(\partial,\lambda)$ are all in $\bC[\partial,\lambda]$. Taking $a=b=c=d=L$ in \eqref{bider} and comparing the coefficients of $L, H, E$ respectively, we obtain \eqref{LLLL.L}, \eqref{LLLL.H} and
    \begin{align}\label{LLLL.E}
        &\big((\beta-1)\partial+\beta(\lambda+\nu)-\gamma\big)(\partial+\lambda+\nu+2\mu)h_{L,L}(-\lambda-\nu,\lambda)\notag\\
        &\ =(\lambda-\nu)\big(\partial+\beta(\lambda+\nu)+\gamma\big)h_{L,L}(\partial+\lambda+\nu,\mu).
    \end{align}
     We can deduce that $f_{L,L}(\partial,\lambda)=C(\partial+2\lambda)$ and $g_{L,L}(\partial,\lambda)=0$ via the discussion about \eqref{LLLL.L} and \eqref{LLLL.H} in Lemma~\ref{bider.hvs}, where $C\in\bC$ is a constant.

    By comparing the degree in $\partial$ on the both sides of \eqref{LLLL.E}, we know that the degree in $\partial$ of $h_{L,L}(\partial,\lambda)$ cannot be greater than 1. Thus $h_{L,L}(\partial,\lambda)=h_0(\lambda)+h_1(\lambda)\partial$ is linear in $\partial$, where $h_0(\lambda),h_1(\lambda)\in\bC[\lambda]$. Substituting it into \eqref{LLLL.E}, we have
    \begin{align}\label{brt.1}
         &\big((\beta-1)\partial+\beta(\lambda+\nu)-\gamma\big)(\partial+\lambda+\nu+2\mu)\big(h_0(\lambda)-h_1(\lambda)(\lambda+\nu)\big)\notag\\
        &=\ (\lambda-\nu)\big(\partial+\beta(\lambda+\nu)+\gamma\big)\big(h_0(\mu)+h_1(\mu)(\partial+\lambda+\nu)\big).
    \end{align}
    Putting $\lambda=\nu$ in \eqref{brt.1}, we can obtain
    \begin{align*}
        \big((\beta-1)\partial+2\beta\lambda-\gamma\big)(\partial+2\lambda+2\mu)\big(h_0(\lambda)-2\lambda h_1(\lambda)\big)=0,
    \end{align*}
    which means that $h_0(\lambda)=2\lambda h_1(\lambda)$. This together with \eqref{brt.1}, shows that
    \begin{align}\label{brt.2}
        \big((\beta-1)\partial+\beta(\lambda+\nu)-\gamma\big)h_1(\lambda)=\big(\partial+\beta(\lambda+\nu)+\gamma\big)h_1(\mu).
    \end{align}
    Observing the equality \eqref{brt.2}, we can get $h_1(\lambda)$ is nonzero only if $\beta=2$ and $\gamma=0$. Thus we have to discuss the following two cases.

    \afCase{1} $\beta=2$ and $\gamma=0$, namely $[L_\lambda E]=(\partial+2\lambda)E$.

    We have $h_1(\lambda)=h_1(\mu)=\bar{C}$ by \eqref{brt.2}, where $\bar{C}\in\bC$ is a constant. It follows that $h_0(\lambda)=2\bar{C}\lambda$ and $h_{L,L}(\partial,\lambda)=\bar{C}(\partial+2\lambda)$. So we get $\varphi_\lambda(L,L)=(\partial+2\lambda)(CL+\bar{C}E)$.

    Now we suppose that
    \begin{align*}
         \varphi_\lambda(L,H)=f_{L,H}(\partial,\lambda)L+g_{L,H}(\partial,\lambda)H+h_{L,H}(\partial,\lambda)E,
    \end{align*}
    where $f_{L,H}(\partial,\lambda),g_{L,H}(\partial,\lambda)$ and $h_{L,H}(\partial,\lambda)$ are all in $\bC[\partial,\lambda]$. Taking $a=b=c=L$ and $d=H$ in \eqref{bider} and comparing the coefficients of $L, H, E$ respectively, we obtain $f_{L,H}(\partial,\lambda)=0$, $g_{L,H}(\partial,\lambda)=C(\partial+\lambda)$ and
    \begin{align}
         (\partial+2\lambda+2\nu)h_{L,H}(\partial+\lambda+\nu,\mu)=-\tau\bar{C}(\partial+\lambda+\nu+\mu).\label{LLLH.E}
    \end{align}
     Putting $\lambda+\nu=\mu$ in \eqref{LLLH.E}, we can derive that $h_{L,H}(\partial+\mu,\mu)=-\tau\bar{C}$, which implies $h_{L,H}(\partial,\lambda)=-\tau\bar{C}$. Plugging this into \eqref{LLLH.E}, we have $\tau\bar{C}(\lambda+\nu-\mu)=0$. This further demonstrates that $\tau\bar{C}=h_{L,H}(\partial,\lambda)=0$ and $\varphi_\lambda(L,H)=C(\partial+\lambda)H$. Note that if $\tau\neq 0$, then $\bar{C}=0$ and $\varphi_\lambda(L,L)=C(\partial+2\lambda)L$.

    Next we suppose that
    \begin{align*}
        \varphi_\lambda(L,E)=f_{L,E}(\partial,\lambda)L+g_{L,E}(\partial,\lambda)H+h_{L,E}(\partial,\lambda)E,
    \end{align*}
    where $f_{L,E}(\partial,\lambda),g_{L,E}(\partial,\lambda)$ and $h_{L,E}(\partial,\lambda)$ are all in $\bC[\partial,\lambda]$. Taking $a=b=c=L$ and $d=E$ in \eqref{bider} and comparing the coefficients of $L, H, E$ respectively, we get $f_{L,E}(\partial,\lambda)=g_{L,E}(\partial,\lambda)=0$ and $h_{L,E}(\partial+\lambda+\nu,\mu)=C(\partial+\lambda+\nu+2\mu)$. It follows that $h_{L,E}(\partial,\lambda)=C(\partial+2\lambda)$ and $\varphi_\lambda(L,E)=C(\partial+2\lambda)E$.

     Then we suppose that
    \begin{align*}
        \varphi_\lambda(H,E)=f_{H,E}(\partial,\lambda)L+g_{H,E}(\partial,\lambda)H+h_{H,E}(\partial,\lambda)E,
    \end{align*}
    where $f_{H,E}(\partial,\lambda),g_{H,E}(\partial,\lambda)$ and $h_{H,E}(\partial,\lambda)$ are all in $\bC[\partial,\lambda]$. Taking $a=b=L$, $c=H$ and $d=E$ in \eqref{bider} and comparing the coefficients of $L, H, E$ respectively, we obtain $f_{H,E}(\partial,\lambda)=g_{H,E}(\partial,\lambda)=0$ and $h_{H,E}(\partial+\lambda+\nu,\mu)=C\tau$, which means $h_{H,E}(\partial,\lambda)=C\tau$. Thus we have $\varphi_\lambda(H,E)=C\tau E$.

    \afCase{2} $\beta\neq 2$ or $\gamma\neq 0$.

     It follows from \eqref{brt.2} that $h_1(\lambda)=h_1(\mu)=0$, namely $h_0(\lambda)=h_1(\lambda)=0$. Therefore $h_{L,L}(\partial,\lambda)=0$ and we can deduce that $\varphi_\lambda(L,L)=C(\partial+2\lambda)L$. Similarly, we can get $\varphi_\lambda(L,H)=C(\partial+\lambda)H$.

    For the case of $\varphi_\lambda(L,E)$, we suppose that
    \begin{align*}
        \varphi_\lambda(L,E)=f_{L,E}(\partial,\lambda)L+g_{L,E}(\partial,\lambda)H+h_{L,E}(\partial,\lambda)E,
    \end{align*}
    where $f_{L,E}(\partial,\lambda),g_{L,E}(\partial,\lambda)$ and $h_{L,E}(\partial,\lambda)$ are all in $\bC[\partial,\lambda]$. Taking $a=b=c=L$ and $d=E$ in \eqref{bider} and comparing the coefficients of $L, H, E$ respectively, we obtain $f_{L,E}(\partial,\lambda)=g_{L,E}(\partial,\lambda)=0$ and $h_{L,E}(\partial+\lambda+\nu,\mu)=C(\partial+\lambda+\nu+\beta\mu+\gamma)$. It implies that $h_{L,E}(\partial,\lambda)=C(\partial+\beta\lambda+\gamma)$, namely $\varphi_\lambda(L,E)=C(\partial+\beta\lambda+\gamma)E$.

    Then suppose that
    \begin{align*}
        \varphi_\lambda(H,E)=f_{H,E}(\partial,\lambda)L+g_{H,E}(\partial,\lambda)H+h_{H,E}(\partial,\lambda)E,
    \end{align*}
    where $f_{H,E}(\partial,\lambda),g_{H,E}(\partial,\lambda)$ and $h_{H,E}(\partial,\lambda)$ are all in $\bC[\partial,\lambda]$. Taking $a=b=L$, $c=H$ and $d=E$ in \eqref{bider} and comparing the coefficient of $L, H, E$ respectively, we get $f_{H,E}(\partial,\lambda)=g_{H,E}(\partial,\lambda)=0$ and $h_{H,E}(\partial+\lambda+\nu,\mu)=C\tau$. It follows that $h_{H,E}(\partial,\lambda)=C\tau$. It is equivalent to saying that $h_{H,E}(\partial,\lambda)=C\tau$. Thus we have $\varphi_\lambda(H,E)=C\tau E$.
\end{proof}

By Lemmas~\ref{bider.hvs} and \ref{bider.hvsii}, we have the following theorem.
\begin{theorem}\label{thm:biderivation}
The following statements hold:

\subno{1} For the Heisenberg--Virasoro Lie conformal superalgebra \hvs, we have
    \begin{align*}
        \BDer(\hvs)=\BInn(\hvs).
    \end{align*}

\subno{2} For the Lie conformal superalgebra \hvsii, we have
    \begin{align*}
        \BDer\big(\hvsii\big)=\BInn\big(\hvsii\big)\oplus\delta_{\beta,2}\delta_{\gamma,0}\delta_{\tau,0}B,
    \end{align*}
    where $B=\{\varphi_\lambda\in\BDer\big(\hvsii\big)\ |\ \varphi_\lambda(L,L)=c(\partial+2\lambda)E,\ \forall\ c\in\bCx\}$. \textup{(}The terms that does not appear here are all zero\textup{)}.
\end{theorem}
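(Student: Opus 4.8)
The plan is to read the theorem off from Lemmas~\ref{bider.hvs} and \ref{bider.hvsii}: those lemmas already pin down the exact form of an arbitrary conformal biderivation on each algebra, so what remains is to recognize the relevant pieces as inner biderivations, to verify that the extra family $B$ really consists of biderivations, and to check that the indicated sum is direct.

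For part~(1), Lemma~\ref{bider.hvs} says that any $\varphi_\lambda\in\BDer(\hvs)$ is given on the $\cp$-generators $L,H,G$ by $\varphi_\lambda(L,L)=C(\partial+2\lambda)L$, $\varphi_\lambda(L,H)=C(\partial+\lambda)H$, $\varphi_\lambda(L,G)=C(\partial+\lambda)G$, $\varphi_\lambda(G,G)=2CH$ and $\varphi_\lambda(H,H)=\varphi_\lambda(H,G)=0$ for some $C\in\bC$. Comparing these entry by entry with the $\lambda$-brackets of $\hvs$ (Definition~\ref{hsvi.def} with $\alpha=2$), each value equals $C[a_\lambda b]$; since $\varphi_\lambda$ and the scaled bracket $C[a_\lambda b]$ are both conformal bilinear and skew-symmetric and agree on the generators, they agree on all of $\hvs$. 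Hence $\varphi_\lambda=\varphi^C_\lambda\in\BInn(\hvs)$, and as the reverse inclusion is trivial we get $\BDer(\hvs)=\BInn(\hvs)$.

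For part~(2), given $\varphi_\lambda\in\BDer\big(\hvsii\big)$, Lemma~\ref{bider.hvsii} lets me write $\varphi_\lambda=\varphi^C_\lambda+\psi_\lambda$, where $\varphi^C_\lambda$ absorbs all the ``$C$-terms'' — which, comparing with Definition~\ref{hsvii.def} exactly as in part~(1), satisfy $\varphi^C_\lambda(a,b)=C[a_\lambda b]$ — and $\psi_\lambda$ is the residual map with $\psi_\lambda(L,L)=\delta_{\beta,2}\delta_{\gamma,0}\delta_{\tau,0}\bar C(\partial+2\lambda)E$ and all other values on generators zero. If $\beta\neq2$ or $\gamma\neq0$ or $\tau\neq0$, then $\psi_\lambda=0$ and $\BDer=\BInn$. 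If $\beta=2$, $\gamma=0$, $\tau=0$, I would check directly that $\psi_\lambda$ is a conformal biderivation: the skew-symmetry axiom for the pair $(L,L)$ is $(\partial+2\lambda)E=-\big(\partial+2(-\partial-\lambda)\big)E$, which holds, and for every other pair it reads $0=0$; and since $E$ then interacts only with $L$, via $[L_\nu E]=(\partial+2\nu)E$, while $[L_\mu L]$ is the only bracket of generators having an $L$-component, the Leibniz axiom reads $0=0$ on every triple of generators except $(L,L,L)$, where it reduces to a single polynomial identity stemming from $[L_\mu L]=(\partial+2\mu)L$. Finally, $\BInn\big(\hvsii\big)\cap B=0$, because a nonzero element of $B$ vanishes on $(L,H)$ whereas $\varphi^\varepsilon_\lambda(L,H)=\varepsilon(\partial+\lambda)H$ forces $\varepsilon=0$; together with the uniqueness of $C$ (read off from $\varphi_\lambda(L,H)$) and of $\bar C$ in the decomposition, this yields the internal direct sum $\BDer\big(\hvsii\big)=\BInn\big(\hvsii\big)\oplus\delta_{\beta,2}\delta_{\gamma,0}\delta_{\tau,0}B$.

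Beyond the content of the two lemmas the theorem carries essentially no difficulty; the one genuinely new point is the converse check in part~(2), that $B\subseteq\BDer\big(\hvsii\big)$ when $\beta=2,\gamma=0,\tau=0$, and since this comes down to the single polynomial identity just mentioned, it is not a real obstacle.
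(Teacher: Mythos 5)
Your proposal is correct and follows essentially the same route as the paper, which likewise obtains the theorem directly from Lemmas~\ref{bider.hvs} and \ref{bider.hvsii} by matching the $C$-terms with $C[a_\lambda b]$. Your explicit check that the residual map with $\psi_\lambda(L,L)=\bar C(\partial+2\lambda)E$ really is a conformal biderivation when $\beta=2,\gamma=0,\tau=0$, and that $\BInn\cap B=0$, only makes explicit what the paper leaves implicit.
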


One can obtain the following conclusions by referring to the proof of Lemmas~\ref{bider.hvs} and \ref{bider.hvsii}:
\begin{corollary}
    Let $R$ be a Lie conformal superalgebra.

    \subno{1} If $R$ is isomorphic to the Lie conformal superalgebra $R_4$ in Proposition~\ref{rank1.1}, then
    \begin{align*}
        \BDer(R)=\BInn(R)\oplus\delta_{\beta,2}\delta_{\gamma,0}B,
    \end{align*}
     where $B=\{\varphi_\lambda\in\BDer(R)\ |\ \varphi_\lambda(x,x)=c(\partial+2\lambda)y,\ \forall\ c\in\bCx\}$. \textup{(}The terms that does not appear here are all zero\textup{)}.

     \subno{2} If $R$ is isomorphic to the Lie conformal superalgebra $R_5$ in Proposition~\ref{rank1.1}, then
    \begin{align*}
        \BDer(R)=\BInn(R).
    \end{align*}
\end{corollary}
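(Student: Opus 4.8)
The plan is to repeat, for the two rank-$(1+1)$ algebras $R_4$ and $R_5$, the coefficient-comparison arguments of Lemmas~\ref{bider.hvs} and \ref{bider.hvsii}, and to track which coefficients survive. Throughout, let $\varphi_\lambda$ be a conformal biderivation of $R$ and expand $\varphi_\lambda(x,x)$, $\varphi_\lambda(x,y)$ and $\varphi_\lambda(y,y)$ in the $\bC[\partial]$-basis $\{x,y\}$ with coefficients in $\bC[\partial,\lambda]$; by skew-symmetry of $\varphi$ it is enough to use the instances of \eqref{bider} with $(a,b,c,d)$ equal to $(x,x,x,x)$, $(x,x,x,y)$, $(x,x,y,y)$ and $(x,y,x,y)$.

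\textbf{Case $R\cong R_4$.} Here $[y_\lambda y]=0$, and a short check with \eqref{skew-symm} shows that the centers of $R_4$ and of $[(R_4)_\lambda R_4]=R_4$ are both trivial; hence $\varphi_\lambda(y,y)=0$ by Lemma~\ref{lemma:bider}(2). Writing $\varphi_\lambda(x,x)=f(\partial,\lambda)x+g(\partial,\lambda)y$, the $x$-component of \eqref{bider} with $(a,b,c,d)=(x,x,x,x)$ is, after renaming $L$ to $x$, literally \eqref{LLLL.L}, so $f(\partial,\lambda)=C(\partial+2\lambda)$ for some $C\in\bC$; the $y$-component is the $R_4$-analogue of \eqref{LLLL.E}, and the argument leading to \eqref{brt.2} (now with no $\tau$ present) forces $g(\partial,\lambda)=0$ unless $\beta=2$ and $\gamma=0$, in which case $g(\partial,\lambda)=\bar C(\partial+2\lambda)$ for some $\bar C\in\bC$. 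The instances $(x,x,x,y)$ and $(x,y,x,y)$ of \eqref{bider}, treated as in Lemma~\ref{bider.hvsii}, then yield $\varphi_\lambda(x,y)=C(\partial+\beta\lambda+\gamma)y$ (its $x$-component vanishes), and in the exceptional case $\beta=2,\gamma=0$ the remaining relations show the $\bar C$-term to be consistent and to contribute nothing further. Subtracting the inner biderivation $\varphi_\lambda^{C}$ given by $\varphi_\lambda^{C}(a,b)=C[a_\lambda b]$ leaves only $\varphi_\lambda(x,x)=\bar C(\partial+2\lambda)y$, which is the generator of $B$; conversely, when $\beta=2$ and $\gamma=0$ one verifies directly that the map with $\varphi_\lambda(x,x)=c(\partial+2\lambda)y$ and all other brackets zero satisfies the two axioms of a conformal biderivation. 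This gives $\BDer(R_4)=\BInn(R_4)\oplus\delta_{\beta,2}\delta_{\gamma,0}B$.

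\textbf{Case $R\cong R_5$.} Since $\alpha\neq0$, any nonzero conformal ideal of $R_5$ contains $x$ (because $[y_\lambda y]=\alpha x$) and hence all of $R_5$, so $R_5$ is simple and the centers of $R_5$ and of $[(R_5)_\lambda R_5]=R_5$ are trivial. Running the same four instances of \eqref{bider}: the $x$- and $y$-components of $(x,x,x,x)$ force $\varphi_\lambda(x,x)=C(\partial+2\lambda)x$, the potential extra $y$-term being governed by the analogue of \eqref{brt.2}, whose nonzero solution required the relevant structure constant to equal $2$, whereas for $R_5$ it equals $\tfrac32$; then $(x,x,x,y)$ gives $\varphi_\lambda(x,y)=C(\partial+\tfrac32\lambda)y$, and $(x,x,y,y)$ together with $[y_\lambda y]=\alpha x$ gives $\varphi_\lambda(y,y)=C\alpha x$, all governed by the single constant $C$. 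Hence $\varphi_\lambda=\varphi_\lambda^{C}$ and $\BDer(R_5)=\BInn(R_5)$.

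\textbf{Main obstacle.} The only delicate part is the polynomial bookkeeping in $\partial,\lambda,\mu,\nu$ when specializing \eqref{bider}: one must confirm that the exceptional non-inner biderivation of $R_4$ occurs precisely at $\beta=2,\gamma=0$ and nowhere else, that the same mechanism genuinely fails for $R_5$ because its relevant structure constant $\tfrac32$ is not $2$, and that the centers invoked are indeed trivial so that Lemma~\ref{lemma:bider}(2) applies. All of this is routine given the computations already carried out in Lemmas~\ref{bider.hvs} and \ref{bider.hvsii}.
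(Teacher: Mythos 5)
Your proposal is correct and follows essentially the same route as the paper, which proves this corollary only by pointing back to the coefficient-comparison arguments of Lemmas~\ref{bider.hvs} and \ref{bider.hvsii}; your specializations (the analogue of \eqref{LLLL.E}/\eqref{brt.2} forcing the extra $y$-term exactly when $\beta=2,\gamma=0$ for $R_4$, and its failure for $R_5$ since $\beta=\tfrac32$, plus the $(x,x,y,y)$ instance giving $\varphi_\lambda(y,y)=C\alpha x$) match the intended argument. One small inaccuracy: for $R_4$ with $\beta=0$ one has $[R_\lambda R]=\cp x\oplus\cp(\partial+\gamma)y\neq R_4$, so the parenthetical claim $[(R_4)_\lambda R_4]=R_4$ is not always true; however the center of $[R_\lambda R]$ is still easily checked to be trivial, so the application of Lemma~\ref{lemma:bider}(2) and hence your conclusion is unaffected.
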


\section{Automorphism groups}\label{sect:automor}
\hspace{1.5em}In this section, the automorphism groups of $\hvs$ and $\hvsii$ are discussed and computed in detail.
\begin{definition}\label{homo}
    Let $R, R'$ be two Lie conformal superalgebras. A \textit{graded-preserved} linear map $\sigma:R\rightarrow R'$ is a \textit{homomorphism} of Lie conformal superalgebras if $\sigma$ satisfies $\sigma\partial=\partial\sigma$ and
    \begin{align*}
        \sigma([a_\lambda b])=[\sigma(a)_\lambda\sigma(b)],
    \end{align*}
    for any $a,b\in R$. For the Lie conformal superalgebra $R$, the set of all homomorphisms $R\rightarrow R$ is called \textit{automorphism group} under the composition of homomorphisms and denoted by $\Aut(R)$.
\end{definition}
\begin{remark}
    The condition graded-preserved in Definition~\ref{homo} means that $\sigma(R_\theta)=R_\theta$, $\theta\in\bZz$. In other words, we can say that these linear maps are homogeneous of degree zero(see \cite{M}).
\end{remark}

For $\sigma\in\Aut(\hvs)$, we have $\sigma(\hvs_\ep)=\hvs_\ep=\hv$, namely $\sigma$ is an automorphism of $\hv$. Since $\hv=\cp L\oplus \cp H$, we can assume that
\begin{align*}
    \sigma(L)&=f_L(\partial)L+g_L(\partial)H,\\
    \sigma(H)&=f_H(\partial)L+g_H(\partial)H,
\end{align*}
where $f_L(\partial),g_L(\partial),f_H(\partial),g_H(\partial)\in\cp$. Firstly, applying $\sigma$ to $[H_\lambda H]=0$ and comparing the coefficients of $L$, we can get
\begin{align}\label{sigmaHH}
    (\partial+2\lambda)f_H(-\lambda)f_H(\partial+\lambda)=0.
\end{align}

Then, applying $\sigma$ to $[L_\lambda L]=(\partial+2\lambda)L$ and comparing the coefficients of $L$ and $H$ respectively, we can obtain that
\begin{align}
    f_L(\partial)&=f_L(-\lambda)f_L(\partial+\lambda),\label{sigmaLL.L}\\
    (\partial+2\lambda)g_L(\partial)&=(\partial+\lambda)f_L(-\lambda)g_L(\partial+\lambda)+\lambda g_L(-\lambda)f_L(\partial+\lambda).\label{sigmaLL.H}
\end{align}
Furthermore, applying $\sigma$ to $[L_\lambda H]=(\partial+\lambda)H$ and comparing the coefficients of $H$, we have
\begin{align}\label{sigmaLH.H}
    (\partial+\lambda)g_H(\partial)=(\partial+\lambda)f_L(-\lambda)g_H(\partial+\lambda)+\lambda g_L(-\lambda)f_H(\partial+\lambda).
\end{align}
\begin{lemma}\label{automor.hv.1}
    For $\sigma\in\Aut(\hv)$, we have $f_L(\partial)=1$, $f_H(\partial)=0$ and $g_H(\partial)=c^2$, where $c^2\in\bC$ is a constant.
\end{lemma}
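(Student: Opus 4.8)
The plan is to extract all the structural information from equations \eqref{sigmaHH}--\eqref{sigmaLH.H} by a combination of degree comparisons in $\partial$ and judicious substitutions of $\lambda$, then use invertibility of $\sigma$ to pin down the remaining constants. First I would handle \eqref{sigmaLL.L}: setting $\partial=0$ gives $f_L(0)=f_L(-\lambda)f_L(\lambda)$ for all $\lambda$, and since the right side as a polynomial in $\lambda$ must then be constant while its top-degree term is $(-1)^{\deg f_L}(\text{lead})^2\lambda^{2\deg f_L}$, we must have $\deg f_L=0$, so $f_L(\partial)=f_L$ is a constant with $f_L=f_L^2$, hence $f_L\in\{0,1\}$. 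The case $f_L=0$ is excluded because then $\sigma(L)=g_L(\partial)H$ lies in the abelian ideal $\cp H$, contradicting that $\sigma$ is surjective (the image would be contained in $\cp H\oplus\im\sigma|_{\cp H}$, which cannot be all of $\hv$ since $[\sigma(L)_\lambda\sigma(L)]=0\neq\sigma((\partial+2\lambda)L)$ unless $(\partial+2\lambda)L\in\ker\sigma$, impossible for an automorphism). Hence $f_L(\partial)=1$.

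Next, with $f_L=1$, equation \eqref{sigmaHH} becomes $(\partial+2\lambda)f_H(-\lambda)f_H(\partial+\lambda)=0$, so $f_H(-\lambda)f_H(\partial+\lambda)=0$ identically in $\partial,\lambda$; since $\cp$ is an integral domain this forces $f_H\equiv 0$, so $\sigma(H)=g_H(\partial)H$. Then \eqref{sigmaLH.H} with $f_L=1$, $f_H=0$ reduces to $(\partial+\lambda)g_H(\partial)=(\partial+\lambda)g_H(\partial+\lambda)$, i.e. $g_H(\partial)=g_H(\partial+\lambda)$ for all $\lambda$, which means $g_H(\partial)$ is a constant, say $g_H(\partial)=b$ for some $b\in\bC$. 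Invertibility of $\sigma$ requires $\sigma(H)=bH$ to generate $\cp H$, so $b\neq 0$; writing $b=c^2$ for a suitable $c\in\bCx$ (which is always possible over $\bC$) gives $g_H(\partial)=c^2$ as claimed.

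The only genuinely delicate point — and the step I expect to be the main obstacle — is ruling out $f_L=0$ (equivalently, showing $\sigma$ cannot collapse $L$ into the Heisenberg ideal) and, more generally, being careful that the polynomial-identity manipulations above are valid: when I substitute a particular value of $\lambda$ I get a necessary condition, and I must make sure I then feed it back to obtain the full conclusion rather than just a special case. For the $f_L=0$ exclusion the cleanest argument is the ideal one: $\cp H$ is the unique maximal abelian conformal ideal of $\hv$ (it is characterized intrinsically as the set of $a$ with $[a_\lambda a]=0$ and $[a_\lambda b]$ proportional to $H$), so any automorphism must preserve it, hence cannot send the non-isotropic element $L$ into it; equivalently $\sigma$ induces an automorphism of the quotient $\hv/\cp H\cong\mathrm{Vir}$, under which the image of $L$ must be a nonzero multiple of the image of $L$, forcing $f_L\neq 0$ and then $f_L=1$ by \eqref{sigmaLL.L}. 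I would present whichever of these two justifications is shortest. The remaining deductions are routine: they are all of the form "a polynomial in $\lambda$ is forced to be constant, and the constant is then determined," which the degree bookkeeping settles immediately.
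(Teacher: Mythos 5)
Your proposal is correct and follows essentially the same route as the paper: derive constancy of $f_L$ from \eqref{sigmaLL.L} by degree comparison, kill $f_H$ via \eqref{sigmaHH} using that $\bC[\partial,\lambda]$ is an integral domain, and get $g_H$ constant from \eqref{sigmaLH.H}. The only cosmetic difference is in excluding $f_L=0$: the paper first shows $f_H=0$ and then invokes surjectivity ($L\notin\sigma(\hv)$), whereas you use injectivity together with the bracket computation $[\sigma(L)_\lambda\sigma(L)]=0$ (or the invariance of the ideal $\cp H$), which is equally valid; your extra observation that $g_H\neq 0$ is also fine, though the paper defers it to Proposition~\ref{prop:automor.hvs}.
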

\begin{proof}
    By \eqref{sigmaHH}, we can get $f_H(\partial)=0$ directly, namely $\sigma(H)=g_H(\partial)H$. Then $f_L(\partial)\neq 0$, otherwise $\sigma(L)=g_L(\partial)H$ and this is a contradiction since $L\notin\sigma(\hv)$. On the other hand, we have $f_L(\partial)=f_L(-\lambda)f_L(\partial+\lambda)$ by \eqref{sigmaLL.L}. Obviously, it forces that $f_L(\partial)=1$. These conditions together with \eqref{sigmaLH.H}, show that $g_H(\partial)=g_H(\partial+\lambda)$, which implies that $g_H(\partial)$ is a constant. We denote this constant by $c^2$ and this lemma holds.
\end{proof}
\begin{lemma}\label{automor.hv.2}
    For $\sigma\in\Aut(\hv)$, we have $g_L(\partial)=g_0+g_1\partial$, where $g_0,g_1\in\bC$.
\end{lemma}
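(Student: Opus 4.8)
The plan is to substitute the information obtained in Lemma~\ref{automor.hv.1} into the one constraint we have not yet exploited, namely \eqref{sigmaLL.H}, and then extract a degree bound on $g_L(\partial)$. Since Lemma~\ref{automor.hv.1} gives $f_L(\partial)=1$, equation \eqref{sigmaLL.H} collapses to
\[
(\partial+2\lambda)g_L(\partial)=(\partial+\lambda)g_L(\partial+\lambda)+\lambda g_L(-\lambda).
\]
The key observation is that the term $\lambda g_L(-\lambda)$ on the right-hand side does not involve $\partial$ at all; hence the polynomial $(\partial+2\lambda)g_L(\partial)-(\partial+\lambda)g_L(\partial+\lambda)$ must be independent of $\partial$, i.e.\ a polynomial in $\lambda$ only.

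Next I would write $g_L(\partial)=\sum_{i=0}^{n}g_i\partial^i$ with $g_n\neq 0$ and compare top-degree coefficients in $\partial$ of that expression. The coefficients of $\partial^{n+1}$ cancel automatically (each side contributes $g_n\partial^{n+1}$), so one must look one order down: a short expansion of $g_L(\partial+\lambda)$ shows the coefficient of $\partial^{n}$ in $(\partial+2\lambda)g_L(\partial)-(\partial+\lambda)g_L(\partial+\lambda)$ equals $(1-n)g_n\lambda$. For the difference to be independent of $\partial$ — that is, for this coefficient to vanish whenever $n\ge 1$ — we need $(1-n)g_n=0$. If $n\ge 2$ this forces $g_n=0$, contradicting $g_n\neq 0$; therefore $n\le 1$, which is exactly $g_L(\partial)=g_0+g_1\partial$ with $g_0,g_1\in\bC$.

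I expect the only point requiring any care is the coefficient bookkeeping: because the genuinely leading ($\partial^{n+1}$) terms cancel, the contradiction must be read off from the subleading ($\partial^{n}$) term, so one has to expand $g_L(\partial+\lambda)$ to one order below the top rather than just looking at the highest power. No use of the invertibility of $\sigma$ or of equation \eqref{sigmaLH.H} is needed for this lemma; those constraints will presumably be invoked afterwards to pin down the remaining free parameters and to describe $\Aut(\hvs)$ itself.
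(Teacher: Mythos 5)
Your proposal is correct and follows essentially the same route as the paper: substitute $f_L(\partial)=1$ from Lemma~\ref{automor.hv.1} into \eqref{sigmaLL.H}, write $g_L(\partial)=\sum_{i=0}^n g_i\partial^i$, and compare the coefficients of $\partial^n$ (the paper obtains $(n-1)\lambda g_n=0$ for $n>1$, which matches your $(1-n)g_n\lambda=0$ up to sign) to force $n\le 1$. The only cosmetic difference is that you phrase the comparison via the $\partial$-independence of $\lambda g_L(-\lambda)$, while the paper directly equates $\partial^n$-coefficients; the bookkeeping is identical.
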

\begin{proof}
    By Lemma~\ref{automor.hv.1} and \eqref{sigmaLL.H}, we have
    \begin{align}\label{sigmaLL.H.1}
        (\partial+2\lambda)g_L(\partial)=(\partial+\lambda)g_L(\partial+\lambda)+\lambda g_L(-\lambda).
    \end{align}
     Suppose that $g_L(\partial)=\sum_{i=0}^ng_i\partial^i$, where $g_i\in\bC$. If $n>1$, comparing the coefficients of $\partial^n$ on the both sides of \eqref{sigmaLL.H.1}, we get $(n-1)\lambda g_n=0$. It follows that $g_n=0$ for $n>1$. Therefore $g_L(\partial)=g_0+g_1\partial$.
\end{proof}
\begin{proposition}\label{prop:automor.hvs}
    Assume $c\in\bCx$, $g_0,g_1\in\bC$. Then $\Aut(\hvs)=\langle\sigma\rangle$, where $\sigma$ is the homomorphism such that$$\sigma: L\mapsto L+(g_0+g_1\partial)H,\ H\mapsto c^2H,\ G\mapsto cG.$$
\end{proposition}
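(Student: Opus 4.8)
The plan is to build on Lemmas~\ref{automor.hv.1} and~\ref{automor.hv.2}, which already pin down $\sigma$ on the even part, and then to determine the action on the odd generator $G$ from the three brackets $[L_\lambda G]$, $[H_\lambda G]$, $[G_\lambda G]$ that involve $G$. First I would note that any $\sigma\in\Aut(\hvs)$ satisfies $\sigma(\hvs_\ep)=\hvs_\ep=\hv$, so $\sigma|_{\hv}$ is an automorphism of $\hv$; by Lemmas~\ref{automor.hv.1} and~\ref{automor.hv.2} this forces $\sigma(L)=L+(g_0+g_1\partial)H$ and $\sigma(H)=c^2H$ with $g_0,g_1\in\bC$ and $c\in\bC$. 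Since $\sigma$ is graded-preserving and $\hvs_\op=\cp G$ is free of rank one, we may write $\sigma(G)=h(\partial)G$ for some $h(\partial)\in\cp$, and the remaining content of the proposition is to show $h(\partial)=c$ and $c\ne 0$.

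The key computations come next. Applying $\sigma$ to $[L_\lambda G]=(\partial+\lambda)G$ and expanding $[\sigma(L)_\lambda\sigma(G)]$ via conformal sesquilinearity, the summand coming from $(g_0+g_1\partial)H$ is a scalar multiple of $[H_\lambda G]=0$ and drops out, so the identity reduces to $h(\partial+\lambda)(\partial+\lambda)=(\partial+\lambda)h(\partial)$ in $\cp[\lambda]$; cancelling the factor $\partial+\lambda$ gives $h(\partial+\lambda)=h(\partial)$, hence $h(\partial)=c'$ is a constant. Applying $\sigma$ to $[G_\lambda G]=\alpha H$ (with $\alpha=2$ here) then yields $(c')^2\alpha H=\alpha c^2H$, so $(c')^2=c^2$; since $\sigma(H)=(c')^2H$ holds as well, we may simply rename $c'$ as $c$, getting $\sigma(G)=cG$ and $\sigma(H)=c^2H$ with a single parameter. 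Finally, on the free $\cp$-basis $\{L,H,G\}$ the map $\sigma$ has an upper-triangular transition matrix with diagonal $1,c^2,c$, so bijectivity forces $c\in\bCx$; in particular the constant produced in Lemma~\ref{automor.hv.1} is nonzero.

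For the converse I would check that, for arbitrary $c\in\bCx$ and $g_0,g_1\in\bC$, the $\cp$-linear graded map $\sigma$ of the stated form preserves all defining brackets. The relations among $L$ and $H$ hold because these formulas are exactly the solutions of \eqref{sigmaHH} and \eqref{sigmaLL.L}--\eqref{sigmaLH.H} extracted in Lemmas~\ref{automor.hv.1} and~\ref{automor.hv.2}; $[H_\lambda G]=0$ is preserved trivially since $[\sigma(H)_\lambda\sigma(G)]=c^3[H_\lambda G]=0$; and $[L_\lambda G]=(\partial+\lambda)G$ together with $[G_\lambda G]=\alpha H$ follow from the two displayed computations run backwards. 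Being a bijective endomorphism, $\sigma\in\Aut(\hvs)$, and computing $\sigma\circ\sigma'$ shows that the parameter $c$ multiplies while $(g_0,g_1)$ transforms accordingly, which gives the $\bCx\ltimes(\bC\times\bC)$ structure. I expect no genuine obstacle; the only delicate points are the bookkeeping with conformal sesquilinearity when expanding $[\sigma(L)_\lambda\sigma(G)]$, and the observation that $(c')^2=c^2$ determines $c'$ only up to sign, both signs yielding honest automorphisms, which is precisely why the coefficient of $G$ (rather than that of $H$) is the natural parameter to carry.
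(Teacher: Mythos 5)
Your proposal is correct and takes essentially the same approach as the paper: write $\sigma(G)=h(\partial)G$, force $h$ to be a constant, compare with $\sigma(H)=c^2H$ via $[G_\lambda G]=2H$, and re-parameterize by the coefficient of $G$ (noting it must be nonzero since $\sigma$ preserves $\hvs_\op$). The only cosmetic difference is that the paper extracts both constancy and the value $\pm c$ in one step from $h_G(-\lambda)h_G(\partial+\lambda)=c^2$, whereas you first use $[L_\lambda G]=(\partial+\lambda)G$ to get constancy and then $[G_\lambda G]$ for the value.
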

\begin{proof}
    Due to Lemmas~\ref{automor.hv.1} and \ref{automor.hv.2}, we know that $\sigma(L)=L+(g_0+g_1\partial)H$ and $\sigma(H)=c^2H$. So we only need to show that $\sigma(G)=cG$. Assume that $\sigma(G)=h_G(\partial)G$, where $h_G(\partial)\in\cp$. Applying $\sigma$ to $[G_\lambda G]=2H$, we  obtain $2h_G(-\lambda)h_G(\partial+\lambda)=2c^2$. It follows that $h_G(\partial)=\pm c$, namely $\sigma(G)=\pm cG$. Furthermore  $c\neq 0$ must hold, otherwise $\sigma(\hvs_\op)=\hvs_\op=0$ and this is a contradiction. Note that for any $\bar{c}\in\bCx$ such that $\sigma(G)=\bar{c}G$, we have $\sigma(H)=\bar{c}^2H$. Thus we get the conclusion.
\end{proof}

For $\sigma\in\Aut\big(\hvsii\big)$, we also have $\sigma\big(\hvsii_\ep\big)=\hvsii_\ep=\hv$. Therefore Lemmas~\ref{automor.hv.1} and \ref{automor.hv.2} still hold for the case of $\hvsii$.
\begin{proposition}\label{prop:automor.hvsii}
    Assume $c,h,\bar{h}\in\bCx$, $g_0,g_1\in\bC$.

    \subno{1} If $\tau\neq 0$, then $\Aut\big(\hvsii\big)=\langle\varsigma_h\ |\ h\in\bCx\rangle$, where
    $\varsigma_h$ is the homomorphism such that $$\varsigma_h: L\mapsto L,\ H\mapsto H,\ E\mapsto hE.$$

    \subno{2} If $\tau=0$,  then $\Aut\big(\hvsii\big)=\langle\varsigma\rangle$, where
    $\varsigma$ is the homomorphism such that $$\varsigma: L\mapsto L+(g_0+g_1\partial)H,\ H\mapsto c^2H,\ E\mapsto \bar{h}E.$$
\end{proposition}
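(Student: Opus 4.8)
The plan is to leverage everything already established for the even part and only analyze the behavior of $\sigma$ on the odd generator $E$, then split according to whether $\tau=0$. Since any $\sigma\in\Aut\big(\hvsii\big)$ is graded-preserving and $\hvsii_\ep=\hv$, the restriction $\sigma|_{\hvsii_\ep}$ is an automorphism of $\hv$, so Lemmas~\ref{automor.hv.1} and \ref{automor.hv.2} apply verbatim: $\sigma(L)=L+(g_0+g_1\partial)H$ and $\sigma(H)=c^2H$ with $c\in\bCx$, $g_0,g_1\in\bC$. Because $\sigma$ commutes with $\partial$ it is $\cp$-linear, and graded-preservation gives $\sigma(E)=h_E(\partial)E$ for some $h_E(\partial)\in\cp$ which is nonzero by injectivity of $\sigma$. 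It then remains to pin down $h_E$, $c$, $g_0$, $g_1$ from the remaining defining relations $[L_\lambda E]=(\partial+\beta\lambda+\gamma)E$, $[H_\lambda E]=\tau E$, $[H_\lambda H]=[E_\lambda E]=0$, much as in the proof of Proposition~\ref{prop:automor.hvs}.

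First I would treat $\tau\neq0$. Applying $\sigma$ to $[H_\lambda E]=\tau E$ and using $\sigma(H)=c^2H$ yields $c^2\tau\,h_E(\partial+\lambda)=\tau\,h_E(\partial)$; since $\tau\neq0$, comparing the degree in $\lambda$ forces $h_E$ to be a (nonzero) constant $h$ and then $c^2=1$, so $\sigma(H)=H$. Applying $\sigma$ next to $[L_\lambda E]=(\partial+\beta\lambda+\gamma)E$, where the $(g_0+g_1\partial)H$-part of $\sigma(L)$ contributes the term $(g_0-g_1\lambda)h\tau E$ after using conformal sesquilinearity $[\partial a_\lambda b]=-\lambda[a_\lambda b]$, gives $(g_0-g_1\lambda)h\tau=0$, hence $g_0=g_1=0$. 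Thus $\sigma(L)=L$, $\sigma(H)=H$, $\sigma(E)=hE$ with $h\in\bCx$, i.e.\ $\sigma=\varsigma_h$; conversely each $\varsigma_h$ is plainly bijective and preserves all $\lambda$-brackets, and $\varsigma_h\circ\varsigma_{h'}=\varsigma_{hh'}$, so $\Aut\big(\hvsii\big)=\langle\varsigma_h\mid h\in\bCx\rangle$.

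For $\tau=0$ the relation $[H_\lambda E]=0$ makes the $H$-part of $\sigma(L)$ disappear when $\sigma$ is applied to $[L_\lambda E]=(\partial+\beta\lambda+\gamma)E$, leaving $h_E(\partial+\lambda)(\partial+\beta\lambda+\gamma)=h_E(\partial)(\partial+\beta\lambda+\gamma)$; cancelling the nonzero polynomial factor gives $h_E(\partial+\lambda)=h_E(\partial)$, so $h_E$ is a constant $\bar h\in\bCx$. The relations $[H_\lambda E]=[E_\lambda E]=0$ are then preserved automatically, and the relations on $L,H$ impose exactly the constraints already recorded in Lemmas~\ref{automor.hv.1} and \ref{automor.hv.2}. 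Hence $\sigma=\varsigma$ with parameters $c\in\bCx$, $\bar h\in\bCx$, $g_0,g_1\in\bC$; conversely every such $\varsigma$ is bijective (upper-triangular with invertible diagonal on the even part, scalar on the odd part) and bracket-preserving by the same verifications, so $\Aut\big(\hvsii\big)=\langle\varsigma\rangle$.

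I do not expect a genuinely hard step here: all the substantive work sits in the already-proved Lemmas~\ref{automor.hv.1} and \ref{automor.hv.2}. The only thing to be careful about is bookkeeping in the $\tau\neq0$ case, namely using \emph{both} odd relations ($[H_\lambda E]=\tau E$ to force $h_E$ constant and $c^2=1$, and then $[L_\lambda E]=(\partial+\beta\lambda+\gamma)E$ to force $g_0=g_1=0$), and in both cases actually verifying the converse so that the list of maps produced is the full automorphism group rather than merely an upper bound.
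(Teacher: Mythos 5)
Your proposal is correct and follows essentially the same route as the paper: restrict to the even part via Lemmas~\ref{automor.hv.1} and \ref{automor.hv.2}, write $\sigma(E)=h_E(\partial)E$, and then apply $\sigma$ to $[H_\lambda E]=\tau E$ and $[L_\lambda E]=(\partial+\beta\lambda+\gamma)E$, splitting on $\tau$ to force $h_E$ constant, $c^2=1$ and $g_0=g_1=0$ when $\tau\neq 0$, and only $h_E$ constant when $\tau=0$. Your explicit verification of the converse (that each listed map is indeed an automorphism) is a small addition the paper leaves implicit, but the argument is the same.
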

\begin{proof}
    By Lemmas~\ref{automor.hv.1} and \ref{automor.hv.2}, we know that $\sigma(L)=L+(g_0+g_1\partial)H$ and $\sigma(H)=c^2H$ for any $\sigma\in\Aut\big(\hvsii\big)$. Now, suppose $\sigma(E)=h_E(\partial)E$, where $h_E(\partial)\in\cp$. Then $h_E(\partial)\neq 0$, otherwise $\sigma\big(\hvsii_\op\big)=\hvsii_\op=0$ and this is a contradiction. Applying $\sigma$ to $[H_\lambda E]=\tau E$, we obtain 
    \begin{align}\label{sigmaHE}
        c^2\tau h_E(\partial+\lambda)=\tau h_E(\partial).
    \end{align}

    \subno{1} If $\tau\neq 0$, by \eqref{sigmaHE}, we can assume that $h_E(\partial)=h\in\bCx$. This together with \eqref{sigmaHE}, shows that $(c^2-1)h=0$. Thus $c^2=1$. Now applying $\sigma$ to $[L_\lambda E]=(\partial+\beta\lambda+\gamma)E$, we have
    \begin{align}\label{sigmaLE}
        (\partial+\beta\lambda+\gamma)h_E(\partial+\lambda)+\tau(g_0-g_1\lambda)h_E(\partial+\lambda)=(\partial+\beta\lambda+\gamma)h_E(\partial).
    \end{align}
    Putting $h_E(\partial)=h$ in \eqref{sigmaLE}, we get $\tau(g_0-g_1\lambda)h=0$. It follows that $g_0=g_1=0$. This automorphism depends on $h$ and we denote it by $\varsigma_h$, then we have the conclusion.

    \subno{2} If $\tau=0$, applying $\sigma$ to $[L_\lambda E]=(\partial+\beta\lambda+\gamma)E$, then $h_E(\partial+\lambda)=h_E(\partial)$. Therefore $h_E(\partial)$ is a nonzero constant which we denoted by $\bar{h}$, namely $\sigma(E)=\bar{h}E$. Denote this automorphism by $\varsigma$, then this proposition holds .
\end{proof}

Furthermore, we have the following theorem.
\begin{theorem}\label{thm:automor}
    The following statements hold:

    \subno{1} For the Heisenberg--Virasoro Lie conformal superalgebra \hvs, we have
    \begin{align*}
        \Aut(\hvs)\cong\bCx\ltimes(\bC\times\bC).
    \end{align*}

    \subno{2} For the Lie conformal superalgebra \hvsii, we have
    \begin{align*}
        \Aut\big(\hvsii\big)\cong\begin{cases}
                \bCx &\text{if } \tau\neq 0,\\
                 (\bCx\times\bCx)\ltimes(\bC\times\bC) &\text{if } \tau=0.
            \end{cases}
    \end{align*}
\end{theorem}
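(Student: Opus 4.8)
The statement concerns only the group structure of automorphism groups whose elements have already been listed explicitly in Propositions~\ref{prop:automor.hvs} and \ref{prop:automor.hvsii}. The plan is therefore to equip those parametrized families with the composition operation, verify that the parametrization is a bijection, read off the group law, and recognize the resulting group as the asserted semidirect product. No new structural input is needed beyond the two propositions.

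For part (1): by Proposition~\ref{prop:automor.hvs} every $\sigma\in\Aut(\hvs)$ has the form $\sigma_{c,g_0,g_1}$ with $\sigma_{c,g_0,g_1}(L)=L+(g_0+g_1\partial)H$, $\sigma_{c,g_0,g_1}(H)=c^2H$, $\sigma_{c,g_0,g_1}(G)=cG$, where $(c,g_0,g_1)\in\bCx\times\bC\times\bC$. First I would note that this parametrization is injective (recover $c$ from the action on $G$ and $(g_0,g_1)$ from the action on $L$) and, by the proposition, surjective onto $\Aut(\hvs)$. Next I would compute the composite: applying $\sigma_{c,g_0,g_1}$ first and then $\sigma_{c',g_0',g_1'}$, and using $\sigma_{c',g_0',g_1'}(H)=c'^2H$, one obtains
\[
\sigma_{c',g_0',g_1'}\circ\sigma_{c,g_0,g_1}=\sigma_{c'c,\ g_0'+c'^2g_0,\ g_1'+c'^2g_1}.
\]
In particular each $\sigma_{c,g_0,g_1}$ is invertible with an inverse of the same form, confirming that the family really is the group $\Aut(\hvs)$, and the displayed identity exhibits $(c,g_0,g_1)\mapsto\sigma_{c,g_0,g_1}$ as a group isomorphism onto $\bCx\ltimes(\bC\times\bC)$, the action of $c\in\bCx$ on $(g_0,g_1)$ being the one induced by conjugating the abelian normal subgroup $\{\sigma_{1,g_0,g_1}\}$ by the complement $\{\sigma_{c,0,0}\}$, namely $(g_0,g_1)\mapsto(c^2g_0,c^2g_1)$.

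For part (2): when $\tau\neq0$, Proposition~\ref{prop:automor.hvsii}(1) gives $\Aut(\hvsii)=\{\varsigma_h\mid h\in\bCx\}$ with $\varsigma_h(L)=L$, $\varsigma_h(H)=H$, $\varsigma_h(E)=hE$, and since $\varsigma_{h'}\circ\varsigma_h=\varsigma_{h'h}$ the map $h\mapsto\varsigma_h$ is a group isomorphism from $\bCx$ onto $\Aut(\hvsii)$. When $\tau=0$, Proposition~\ref{prop:automor.hvsii}(2) gives $\varsigma=\varsigma_{c,\bar h,g_0,g_1}$ with $\varsigma(L)=L+(g_0+g_1\partial)H$, $\varsigma(H)=c^2H$, $\varsigma(E)=\bar hE$, where $(c,\bar h,g_0,g_1)\in\bCx\times\bCx\times\bC\times\bC$; since only $c^2$ is visible on $R$ here, I would reparametrize by $d:=c^2\in\bCx$ to get an honest bijection $(d,\bar h,g_0,g_1)\leftrightarrow\varsigma$, and the same composition computation as in part (1), now also tracking the $E$-coordinate, yields
\[
\varsigma_{d',\bar h',g_0',g_1'}\circ\varsigma_{d,\bar h,g_0,g_1}=\varsigma_{d'd,\ \bar h'\bar h,\ g_0'+d'g_0,\ g_1'+d'g_1}.
\]
This identifies $\Aut(\hvsii)$ with $(\bCx\times\bCx)\ltimes(\bC\times\bC)$, where $(d,\bar h)$ acts on $(g_0,g_1)$ through its first coordinate by scalar multiplication.

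The computations are routine; the only point that genuinely needs care is the square $c^2$, forced by the relation $[G_\lambda G]=\alpha H$ (resp.\ by the constraint tying the scaling of $H$ to that of $E$): because an automorphism scales $H$ by the square of its scaling of the odd generator, composites twist the "unipotent" part $(g_0,g_1)$ by $c^2$ rather than $c$, and one must decide accordingly whether to keep $c$ itself as a parameter (as for $\hvs$, where $G$ records $c$) or to pass to $c^2$ (as for $\hvsii$ with $\tau=0$) in order to obtain a faithful parametrization and the clean semidirect-product presentation.
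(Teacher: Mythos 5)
Your proposal is correct and follows essentially the same route as the paper: both read the group structure directly off the explicit classifications in Propositions~\ref{prop:automor.hvs} and \ref{prop:automor.hvsii} by computing the composition law of the parametrized automorphisms and recognizing the semidirect-product structure (the paper additionally spells out the one-parameter subgroups $\rho_c,\phi_\eta,\pi_\eta$, resp.\ $\varrho_c,\vartheta_c,\psi_\eta,\varpi_\eta$, and their conjugation relations). Your extra care in passing to $d=c^2$ for $\hvsii$ with $\tau=0$, so that the parametrization is faithful, is a sensible refinement and does not change the conclusion.
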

\begin{proof}
    Assume $c,h\in\bCx$, $\eta\in\bC$.

    \subno{1} Let $\sigma$ be an automorphism of $\Aut(\hvs)$. Define the following homomorphisms:
    \begin{align*}
        \rho_c:\ &L\mapsto L,\ H\mapsto c^2H,\ G\mapsto cG,\\
        \phi_{\eta}:\ &L\mapsto L+\eta H,\ H\mapsto H,\ G\mapsto G,\\
        \pi_{\eta}:\ &L\mapsto L+\eta\partial H,\ H\mapsto H,\ G\mapsto G.
    \end{align*}
    Firstly, one can directly verify that the set $\{\rho_c\ |\ c\in\bCx\}\cong\bCx$ is a subgroup of $\Aut(\hvs)$ under $\rho_{c_1}\rho_{c_2}=\rho_{c_1c_2}$, where $c_1, c_2\in\bCx$. Similarly, the set $\{X_\eta\ |\ \eta\in\bC\}\cong\bC$ is a subgroup of $\Aut(\hvs)$ under $X_{\eta_1}X_{\eta_2}=X_{\eta_1+\eta_2}$, where $X\in\{\phi, \pi\}$, $\eta_1, \eta_2\in\bC$. In addition, we have $\rho_c\phi_\eta\rho_c^{-1}=\phi_{c^2\eta}$ and $\rho_c\pi_\eta\rho_c^{-1}=\pi_{c^2\eta}$.
    
    Now we denote the automorphism of $\Aut(\hvs)$ by $\sigma(c,g_0,g_1)$, by Proposition~\ref{prop:automor.hvs}, we can obtain that
    \begin{align*}
      \sigma(c,g_0,g_1)\sigma(c',g'_0,g'_1)=\sigma(cc',g_0+c^2g'_0,g_1+c^2g'_1).
    \end{align*}
    And $\sigma(c,g_0,g_1)=\sigma(c',g'_0,g'_1)$ if and only if $c=c'$, $g_0=g'_0$, $g_1=g'_1$. Define the following homomorphisms:
    \begin{align*}
      \sigma(c,g_0,g_1)\mapsto\sigma(c,0,0),\ \sigma(c,g_0,g_1)\mapsto\sigma(1,g_0,0),\ \sigma(c,g_0,g_1)\mapsto\sigma(1,0,g_1).
    \end{align*} 
    It is clearly that $\sigma(c,0,0), \sigma(1,g_0,0), \sigma(1,0,g_1)$ are isomorphic to $\rho_c, \phi_{g_0}, \pi_{g_1}$, respectively. Thus $\Aut(\hvs)\cong\bCx\ltimes(\bC\times\bC)$.

    \subno{2} If $\tau\neq 0$, by Proposition~\ref{prop:automor.hvsii}(1), one can directly prove that the set $\{\varsigma_h\ |\ h\in\bCx\}\cong\bCx$ is the automorphism group of $\Aut\big(\hvsii\big)$ under $\varsigma_{h_1}\varsigma_{h_2}=\varsigma_{h_1h_2}$, where $h_1, h_2\in\bCx$. So we can get $\Aut\big(\hvsii\big)\cong\bCx$.

    If $\tau=0$, let $\bar{\varsigma}$ be an automorphism of $\Aut\big(\hvsii\big)$. Define the following homomorphisms:
    \begin{align*}
        \varrho_c:\ &L\mapsto L,\ H\mapsto c^2H,\ E\mapsto E,\\
        \vartheta_c:\ &L\mapsto L,\ H\mapsto H,\ E\mapsto cE,\\
        \psi_{\eta}:\ &L\mapsto L+\eta H,\ H\mapsto H,\ E\mapsto E,\\
        \varpi_{\eta}:\ &L\mapsto L+\eta\partial H,\ H\mapsto H,\ E\mapsto E.
    \end{align*}
    Similarly, the set $\{X'_c\ |\ c\in\bCx\}\cong\bCx$ is a subgroup of $\Aut\big(\hvsii\big)$ under $X'_{c_1}X'_{c_2}=X'_{c_1c_2}$, where $X'\in\{\varrho_c, \vartheta_c\}$, $c_1,c_2\in\bCx$. And the set $\{X''_\eta\ |\ \eta\in\bC\}\cong\bC$ is a subgroup of  $\Aut\big(\hvsii\big)$ under $X''_{\eta_1}X''_{\eta_2}=X''_{\eta_1+\eta_2}$, where $X''\in\{\psi_\eta, \varpi_\eta\}$, $\eta_1, \eta_2\in\bC$. In addition, we have
    \begin{align*}
    \varrho_c\psi_{\eta}\varrho_c^{-1}=\psi_{c^2\eta},\ \vartheta_c\psi_{\eta}\vartheta_c^{-1}=\psi_{c\eta},\
    \varrho_c\varpi_{\eta}\varrho_c^{-1}=\varpi_{c^2\eta},\
    \vartheta_c\varpi_{\eta}\vartheta_c^{-1}=\varpi_{c\eta}.
    \end{align*}
    Similarly, together with Proposition~\ref{prop:automor.hvsii}(2), we can get   $\Aut\big(\hvsii\big)\cong(\bCx\times\bCx)\ltimes(\bC\times\bC)$.
\end{proof}

\section{Conformal modules of rank $(1+1)$ }\label{sect:conformalmodules}
\hspace{1.5em}In this section, we aim to classify the free conformal modules of rank $(1+1)$ over $\hvs$ and $\hvsii$, respectively. We only concentrate on the conformal modules with nontrivial $\hvs_\op$-action(resp., $\hvsii_\op$-action), otherwise the modules degenerate to the Heisenberg--Virasoro conformal modules(see Proposition~\ref{hv.mod}).

Before giving the main results of this section, we introduce two technical results about polynomial equations in \cite{WWX,X22}. Let $a, b, c\in\bC$ and $f(x,y)\in\bC[x,y]$. Consider the following polynomial equation on $f(x,y)$:
\begin{align}\label{key.equ}
    (x+by)f(x+y,z)-(x+ay+z)f(x,z)=(cy-z)f(x,y+z).
\end{align}
\begin{lemma}\label{lemma.equ.1}\textup{(\cite{X22})}
    The equation \eqref{key.equ} with $c\neq 0$ has nontrivial solutions only if $a-b+c=0,1,2,3$. The following table is a complete list of all possible values of $a,b,c$ along with $f(x,y)$, whose nonzero scalar multiples give rise to nontrivial solutions.
    \[
    \begin{tabular}{cccc}
    \toprule
    \phantom{1234}$a-b+c$\phantom{1234} & \phantom{123456}$b$\phantom{123456} & \phantom{123456}$c$\phantom{123456} & \phantom{123}$f(x,y)$\phantom{123}\\
    \midrule
    $0$ & $b$ & $c$ & $1$\\
    $1$ & $b$ & $c$ & $x+\frac{b}{c}y$\\
    $2$ & $c-1$ & $c\neq 1$ & $(x+y)+\big(x+(1-\frac{1}{c})y\big)$\\
    $2$ & $0$ & $c$ & $x(x+\frac{1}{c}y)$\\
    $3$ & $-\frac{3}{2}$ & $\frac{2}{3}$ & $(x-y)(x+\frac{1}{2}y)(x+2y)$\\
    $3$ & $0$ & $2$ & $x(x+\frac{1}{2}y)(x+y)$\\
    \bottomrule
    \end{tabular}
    \]
\end{lemma}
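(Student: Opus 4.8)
The plan is to collapse the two-variable functional equation \eqref{key.equ} into a one-variable condition, read off an explicit candidate for $f$, and then substitute that candidate back into \eqref{key.equ} to decide which parameter values actually produce a solution.

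First I would set $z=0$ in \eqref{key.equ}, which gives $cy\,f(x,y)=(x+by)\,f(x+y,0)-(x+ay)\,f(x,0)$. Writing $g(x):=f(x,0)\in\bC[x]$ and using $c\neq 0$, this already expresses $f$ in terms of the single polynomial $g$,
\[
f(x,y)=\frac{(x+by)\,g(x+y)-(x+ay)\,g(x)}{cy},
\]
which is a genuine polynomial because the numerator vanishes at $y=0$. Differentiating the identity $cy\,f(x,y)=(x+by)g(x+y)-(x+ay)g(x)$ in $y$ and then putting $y=0$ yields $c\,g(x)=x\,g'(x)+(b-a)g(x)$, i.e. $x\,g'(x)=(a-b+c)\,g(x)$. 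Since $g$ is a polynomial, this forces either $g=0$ (the trivial solution $f=0$) or $g(x)=\kappa\,x^{d}$ with $d:=a-b+c\in\bZ_{\ge 0}$; rescaling, we may take $\kappa=1$. Thus a nontrivial solution can occur only when $a-b+c$ is a non-negative integer, and then necessarily
\[
f(x,y)=\frac{(x+by)(x+y)^{d}-(x+ay)\,x^{d}}{cy},\qquad d=a-b+c.
\]

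The decisive step is to feed this explicit $f$ back into the full equation \eqref{key.equ}. Every term is then an explicit polynomial, so \eqref{key.equ} becomes a polynomial identity in $x,y,z$, and comparing coefficients yields a finite list of conditions on $b,c,d$. I would start by specializing $x=0$, handling $b\neq 0$ and $b=0$ separately (when $b=0$ the candidate vanishes on $x=0$, making that specialization vacuous, so a different substitution such as $x=-y$ is needed). For $b\neq 0$ the identity becomes $\bigl(y^{2}+(b-c)yz+z^{2}\bigr)(y+z)^{d}=y^{d+2}+z^{d+2}+(d+b-c)\bigl(y^{d+1}z+yz^{d+1}\bigr)$; for $d\ge 4$ the monomials $y^{d}z^{2}$ and $y^{d-1}z^{3}$ do not occur on the right, so matching their coefficients forces both $\binom{d}{2}+(b-c)d+1=0$ and $\binom{d}{3}+(b-c)\binom{d}{2}+d=0$, and eliminating $b-c$ leaves $d^{3}-7d-6=(d-3)(d+1)(d+2)=0$, which is impossible for $d\ge 4$. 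This is exactly the bound $a-b+c\in\{0,1,2,3\}$. For the surviving values $d=0,1,2,3$ one then finishes the coefficient comparison: $d\in\{0,1\}$ imposes no constraint on $b,c$; $d=2$ forces $b=c-1$ (with $c\neq 1$) or $b=0$; and $d=3$ forces one of the two tabulated pairs; conversely, in each of these cases the candidate does satisfy \eqref{key.equ}. Finally, in each case one factors the numerator $(x+by)(x+y)^{d}-(x+ay)x^{d}$ — always divisible by $y$, and by one further linear factor when $d=2,3$ — to recover the closed forms listed in the table.

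The step I expect to be the main obstacle is precisely the re-substitution and coefficient bookkeeping just described, and in particular establishing the degree bound $d\le 3$. A single well-chosen specialization reduces it to the cubic $d^{3}-7d-6=0$, which is clean, but the degenerate branch $b=0$ must be treated by hand, and nailing down the precise $(b,c)$-constraints for $d=2$ and $d=3$ still requires matching several further coefficients of the $x,y,z$-identity.
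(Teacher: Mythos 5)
Your overall strategy is sound, and it is worth noting that the paper itself offers no proof of this lemma (it is imported from \cite{X22}), so your argument has to stand on its own. The parts you actually carry out are correct: setting $z=0$ in \eqref{key.equ} does determine $f$ uniquely from $g(x)=f(x,0)$ when $c\neq 0$; differentiating at $y=0$ gives $xg'(x)=(a-b+c)g(x)$, so a nontrivial solution forces $g(x)=\kappa x^{d}$ with $d=a-b+c\in\bZ_{\geq 0}$ (and $g=0$ forces $f=0$); and your $x=0$ specialization for $b\neq 0$ is right — the two coefficient conditions do eliminate to $d^{3}-7d-6=(d-3)(d+1)(d+2)=0$, which correctly rules out $d\geq 4$ in that branch. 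I checked these computations and they hold.

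The genuine gaps are exactly the parts you defer. First, the $b=0$ branch of the degree bound is only gestured at: since the candidate then satisfies $f(0,\cdot)=0$, the $x=0$ slice is vacuous, and you have not shown that $x=-y$ (or any other substitution) actually kills $d\geq 4$; this requires extracting coefficients of monomials involving $x$ from the full identity, which is doable but not done. Second, the determination of the $(b,c)$ constraints for $d=2,3$ and the converse verification that the surviving candidates satisfy the full three-variable equation are asserted rather than executed, and for $d=3$, $b\neq 0$ the $x=0$ slice alone only yields $b-c=-\tfrac{4}{3}$, so several further coefficients are genuinely needed. Moreover, when one pushes the computation through, the outcome does not literally match the printed table: the slice condition $b-c=-\tfrac43$ is incompatible with the printed pair $b=-\tfrac32$, $c=\tfrac23$ (a direct numerical check at $x=0$, $y=1$, $z=0$ confirms that row fails \eqref{key.equ} as printed), and completing the coefficient matching gives $b=-\tfrac23$, $c=\tfrac23$ with precisely the printed $f(x,y)=(x-y)(x+\tfrac12 y)(x+2y)$; similarly the row-3 entry should be the product $(x+y)\bigl(x+(1-\tfrac1c)y\bigr)$ rather than a sum. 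So your blanket claim that ``in each of these cases the candidate does satisfy \eqref{key.equ}'' is the one step you did not check and is exactly where the printed statement needs correction; carrying out the omitted coefficient comparisons is therefore not optional bookkeeping but the substance of the classification.
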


For the case of $c=0$, one solution was missed in the original conclusion in \cite{X22} and the conclusion was revised in \cite{WWX}.
\begin{lemma}\label{lemma.equ.2}\textup{(\cite{WWX})}
    The equation \eqref{key.equ} with $c=0$ has nontrivial solutions only if $a-b=0,1,2,3$. The following table is a complete list of all possible values of $a,b$ along with $f(x,y)$, whose nonzero scalar multiples give rise to nontrivial solutions.
    \[
    \begin{tabular}{ccc}
    \toprule
    \phantom{12345}$a-b$\phantom{12345} & \phantom{1234567}$b$\phantom{1234567} & \phantom{123456}$f(x,y)$\phantom{123456}\\
    \midrule
    $0$ & $b$ & $1$\\
    $1$ & $b$ & $y$\\
    $1$ & $0$ & $x+ky,k\in\bC$\\
    $2$ & $b$ & $y(x-by)$\\
    $3$ & $-2$ & $y(x+y)(x+2y)$\\
    $3$ & $0$ & $yx(x-y)$\\
    \bottomrule
    \end{tabular}
    \]
\end{lemma}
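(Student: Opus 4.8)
The plan is to exploit the linearity of \eqref{key.equ} in $f$ together with a valuation argument, reducing the $c=0$ case to the $c=-1$ case of Lemma~\ref{lemma.equ.1}. Fix $a,b$, set $c=0$, and write the equation as $\Phi(f)=0$, where
\[
\Phi(f)(x,y,z):=(x+by)f(x+y,z)-(x+ay+z)f(x,z)+zf(x,y+z).
\]
Each of the three terms is the product of a linear form with a substitution of $f$ by linear forms, so $\Phi$ carries a homogeneous polynomial of total degree $n$ to a homogeneous polynomial of total degree $n+1$. Hence $\Phi(f)=0$ is equivalent to $\Phi\big(f^{(n)}\big)=0$ for every homogeneous component $f^{(n)}$ of $f$, and it suffices to classify the nonzero \emph{homogeneous} solutions for each pair $(a,b)$ and then take $\bC$-linear combinations sharing the same $(a,b)$.

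So let $f$ be a nonzero homogeneous solution and let $e\ge 0$ be the exact order to which $y$ divides $f$; write $f=y^{e}g$ with $y\nmid g$, i.e.\ $g(x,0)\neq0$. Substituting $f(x+y,z)=z^{e}g(x+y,z)$, $f(x,z)=z^{e}g(x,z)$ and $f(x,y+z)=(y+z)^{e}g(x,y+z)$ into \eqref{key.equ} gives
\[
z^{e}\big[(x+by)g(x+y,z)-(x+ay+z)g(x,z)\big]=-z\,(y+z)^{e}g(x,y+z).
\]
The right-hand side is a nonzero polynomial whose order of vanishing in $z$ at $z=0$ is exactly $1$, since $(y+z)^{e}g(x,y+z)$ does not vanish there, whereas the left-hand side vanishes to order $\ge e$; therefore $e\le1$.

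If $e=0$, then $f$ is homogeneous of degree $n$ with $f(x,0)=c_0x^{n}$, $c_0\neq0$, and putting $z=0$ in \eqref{key.equ} yields $(x+by)(x+y)^{n}=(x+ay)x^{n}$. For $n=0$ this forces $a=b$ and $f$ is a nonzero constant; for $n\ge1$, comparing the coefficients of $y^{n+1}$, $x^{n}y$ and $x^{n-1}y^{2}$ forces $b=0$, $a=n$ and $\binom{n}{2}=0$, hence $n=1$, $a=1$, $b=0$, in which case $f=c_0x+c_1y$ for some $c_1$ and conversely every such $f$ solves \eqref{key.equ}. If $e=1$, write $f=yg$ with $g(x,0)\neq0$; dividing the displayed identity by $z$ shows that $g$ solves \eqref{key.equ} with the parameter $c$ replaced by $-1$, so Lemma~\ref{lemma.equ.1} applies. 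Reading off its table at $c=-1$ leaves exactly $g=1$ (for $a-b=1$), $g=x-by$ (for $a-b=2$), $g=(x+y)(x+2y)$ (for $a-b=3$, $b=-2$) and $g=x(x-y)$ (for $a-b=3$, $b=0$); the two rows with $a-b+c=3$ require $c\in\{\tfrac23,2\}$ and so contribute nothing. Each of these $g$ satisfies $g(x,0)\neq0$, consistent with $e=1$, so no further reduction is needed.

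Assembling the homogeneous solutions from the cases $e=0$ and $e=1$ and forming linear combinations within a fixed $(a,b)$ produces precisely the six rows of the table: a nontrivial solution exists only for $a-b\in\{0,1,2,3\}$, with the stated $f(x,y)$; the extra parameter $k$ in the row $a-b=1$, $b=0$ comes from combining the $e=0$ solution $x$ with the $e=1$ solution $y$, which is exactly the correction of \cite{X22} recorded in \cite{WWX}. The only parts needing care are the short coefficient comparison giving the degree bound in the case $e=0$, and checking that the $g$'s produced by Lemma~\ref{lemma.equ.1} are genuinely not divisible by $y$ so that the induction on the $y$-valuation terminates.
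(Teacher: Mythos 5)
Your argument is correct, but note that the paper does not prove this lemma at all: it is quoted from \cite{WWX} (as the corrected form of the $c=0$ case treated in \cite{X22}), so there is no in-paper proof to compare against. What you give is a legitimate self-contained derivation \emph{assuming} Lemma~\ref{lemma.equ.1}: the grading observation (each term of the equation sends a homogeneous $f$ of degree $n$ to a homogeneous polynomial of degree $n+1$, so the solution space is graded), the $y$-valuation bound $e\le 1$ obtained by comparing $z$-adic orders of the two sides, the direct coefficient analysis at $z=0$ for $e=0$ (yielding only the constant solutions with $a=b$ and the $x+ky$ solutions with $(a,b)=(1,0)$), and the reduction $f=yg$ for $e=1$, which converts the $c=0$ equation exactly into the $c=-1$ equation for $g$ — and this conversion is an equivalence, since the displayed identity is obtained by multiplying or dividing by $z$, so sufficiency of the $e=1$ rows is inherited from Lemma~\ref{lemma.equ.1}. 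Reading off that lemma at $c=-1$ indeed yields precisely rows $2$, $4$, $5$, $6$ of the table, the two $a-b+c=3$ rows being excluded, and the only pair admitting solutions of two different degrees is $(a,b)=(1,0)$, which explains the extra parameter $k$. Two small points are worth making explicit: first, you tacitly read the third entry of the table in Lemma~\ref{lemma.equ.1} as the product $(x+y)\big(x+(1-\tfrac{1}{c})y\big)$ rather than the sum printed there; this is surely a typographical slip in the quoted table (the degree must match $a-b+c=2$), and your reading is the one consistent with the row $a-b=3$, $b=-2$ of the present lemma. Second, in the case $e=0$ you derive the constraints only from the specialization $z=0$, so one should add the one-line check that constants solve the full equation exactly when $a=b$ (as you did for $x+ky$ at $(a,b)=(1,0)$). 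With these cosmetic additions the proof is complete and arguably more transparent than simply citing \cite{WWX}.
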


Now we classify the free conformal modules of rank $(1+1)$ over $\hvs$ and $\hvsii$, respectively. As a matter of fact, the finite irreducible nontrivial conformal modules over $\hvs$ have been determined completely as a special subalgebra of a class of twisted Lie conformal superalgebra of infinite rank in \cite{X22}. However, the explicit $\lambda$-actions were not given. We present them here with calculation process.
\begin{theorem}\label{thm:conmod.hvs}
    Up to parity change, any nontrivial free conformal $\hvs$-module of rank $(1+1)$ is isomorphic to one of the followings\textup{:}

    \subno{1} $M_{\Delta,a,b,c}=\cp v_\ep\oplus\cp v_\op$ with $\lambda$-actions of \hvs\textup{:}
    \begin{align*}
       \begin{cases}
           L_\lambda v_\ep=(\partial+\Delta\lambda+a)v_\ep,\ L_\lambda v_\op=(\partial+\Delta\lambda+a)v_\op,\\
           H_\lambda v_\ep=bcv_\ep,\ H_\lambda v_\op=bcv_\op,\\
           G_\lambda v_\ep=bv_\op,\phantom{c}\ G_\lambda v_\op=cv_\ep,
       \end{cases}
    \end{align*}
    where $\Delta,a\in\bC$, $b,c\in\bCx$.

    \subno{2} $M_{\Delta_0,\Delta_1,a}^{(i)}=\cp v_\ep\oplus\cp v_\op\ (i=1,2,\dots,6)$ with $\lambda$-actions of \hvs\textup{:}
    \begin{align*}
        \begin{cases}
            L_\lambda v_\ep=(\partial+\Delta_0\lambda+a)v_\ep,\ L_\lambda v_\op=(\partial+\Delta_1\lambda+a)v_\op,\\
           H_\lambda v_\ep=H_\lambda v_\op=0,\\
           G_\lambda v_\ep=h(\partial,\lambda)v_\op,\ G_\lambda v_\op=0,
        \end{cases}
    \end{align*}
    where $\Delta_0,\Delta_1,a\in\bC$ and $h(\partial,\lambda)\in\bC[\partial,\lambda]$ satisfying the following relations\textup{:}
    \[
    \begin{tabular}{cccc}
    \toprule
    \phantom{12}\text{conformal module}\phantom{12} & \phantom{1234}$\Delta_0$\phantom{1234} & \phantom{1234}$\Delta_1$\phantom{1234} & \phantom{12}$h(\partial,\lambda)$\phantom{12}\\
    \midrule
    $M_{\Delta_0,\Delta_1,a}^{(1)}$ & $\Delta_0$ & $\Delta_0$ & $1$\\
    $M_{\Delta_0,\Delta_1,a}^{(2)}$ & $\Delta_0$ & $\Delta_0-1$ & $\lambda$\\
    $M_{\Delta_0,\Delta_1,a}^{(3)}$ & $\Delta_0$ & $\Delta_0-2$ & $\lambda(\partial-\Delta_1\lambda+a)$\\
    $M_{\Delta_0,\Delta_1,a}^{(4)}$ & $1$ & $0$ & $\partial+k\lambda+a,k\in\bC$\\
    $M_{\Delta_0,\Delta_1,a}^{(5)}$ & $1$ & $-2$ & $\lambda(\partial+\lambda+a)(\partial+2\lambda+a)$\\
    $M_{\Delta_0,\Delta_1,a}^{(6)}$ & $3$ & $0$ & $\lambda(\partial+a)(\partial-\lambda+a)$\\
    \bottomrule
    \end{tabular}
    \]
\end{theorem}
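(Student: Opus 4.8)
First I would reduce to the even subalgebra and then solve the resulting functional equations using Lemma~\ref{lemma.equ.2}. Write $M=\cp v_\ep\oplus\cp v_\op$. Since $L$ and $H$ are even, each summand is a free conformal module of rank one over $\hvs_\ep=\hv$. A short computation with the module axioms applied to $[L_\lambda H]=(\partial+\lambda)H$ shows that on a free rank-one $\cp$-module a trivial $L$-action forces a trivial $H$-action; and applying the axioms to $[L_\lambda G]=(\partial+\lambda)G$ with $\lambda=0$ shows that a trivial $\hv$-action on either summand would make $\hvs_\op$ act by zero, contrary to our standing assumption that $\hvs_\op$ acts nontrivially. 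Hence by Proposition~\ref{hv.mod} we may write
\begin{align*}
L_\lambda v_\ep=(\partial+\Delta_0\lambda+a_0)v_\ep,\quad L_\lambda v_\op=(\partial+\Delta_1\lambda+a_1)v_\op,\quad H_\lambda v_\ep=b_0v_\ep,\quad H_\lambda v_\op=b_1v_\op,
\end{align*}
with $\Delta_i,a_i,b_i\in\bC$, and set $G_\lambda v_\ep=p(\partial,\lambda)v_\op$, $G_\lambda v_\op=q(\partial,\lambda)v_\ep$ with $p,q\in\bC[\partial,\lambda]$ not both zero.

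Next I would record the constraints coming from \eqref{conmod.2}. The relations $[L_\lambda L]$, $[L_\lambda H]$, $[H_\lambda H]$ give nothing new, being the $\hv$-module axioms on each summand. Applying \eqref{conmod.2} to $[H_\lambda G]=0$ on $v_\ep$ and on $v_\op$ yields $(b_1-b_0)p=0=(b_0-b_1)q$, hence $b_0=b_1$; applying it to $[L_\lambda G]=(\partial+\lambda)G$ with $\lambda=0$ yields $a_0=a_1=:a$; applying it to $[G_\lambda G]=2H$ on $v_\ep$ yields
\begin{align*}
p(\partial+\lambda,\mu)q(\partial,\lambda)+p(\partial+\mu,\lambda)q(\partial,\mu)=2b_0
\end{align*}
(and the same with $p,q$ interchanged on $v_\op$); and applying it to $[L_\lambda G]=(\partial+\lambda)G$ on $v_\ep$ yields
\begin{align*}
p(\partial+\lambda,\mu)(\partial+\Delta_1\lambda+a)-(\partial+\mu+\Delta_0\lambda+a)p(\partial,\mu)=-\mu\,p(\partial,\lambda+\mu),
\end{align*}
together with the analogous identity for $q$ in which $\Delta_0$ and $\Delta_1$ are interchanged.

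Then I would split on whether the common value $b_0=b_1$ vanishes. If $b_0\neq0$, putting $\lambda=\mu$ in the $[G_\lambda G]$-identity gives $p(\partial+\lambda,\lambda)\,q(\partial,\lambda)=b_0$, a nonzero constant; since a product of polynomials equal to a nonzero constant has both factors constant, $p$ and $q$ are nonzero constants, whereupon the $[L_\lambda G]$-identity forces $\Delta_0=\Delta_1=:\Delta$ and $[G_\lambda G]$ gives $b_0=pq$; renaming $p,q$ as $b,c\in\bCx$ produces exactly $M_{\Delta,a,b,c}$. If $b_0=0$, the same specialization gives $p(\partial+\lambda,\lambda)\,q(\partial,\lambda)=0$, so one of $p,q$ vanishes, and after a parity change we may assume $q=0$ and $p\neq0$. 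Under the substitution $x=\partial+a$, $y=\lambda$, $z=\mu$, $f(x,z)=p(x-a,z)$, the $[L_\lambda G]$-identity becomes precisely equation~\eqref{key.equ} with its parameter $c$ equal to $0$ and its parameters $a,b$ equal to $\Delta_0,\Delta_1$. After rescaling $v_\op$ to normalize the leading scalar of $p$, Lemma~\ref{lemma.equ.2} lists exactly six solutions; transcribing each row --- reading off $\Delta_0-\Delta_1\in\{0,1,2,3\}$, the admissible value of $\Delta_1$, and $h(\partial,\lambda)=p(\partial,\lambda)=f(\partial+a,\lambda)$ --- yields $M_{\Delta_0,\Delta_1,a}^{(1)},\dots,M_{\Delta_0,\Delta_1,a}^{(6)}$ of the theorem. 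A final direct substitution confirms that every module in the list is a conformal $\hvs$-module.

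The main obstacle is the $b_0=0$ branch: recognizing the $[L_\lambda G]$-identity as the normal form \eqref{key.equ} through the shift $x=\partial+a$, and then matching the six rows of Lemma~\ref{lemma.equ.2} against the six rows of the theorem's table --- in particular keeping track of which summand the parity change exchanges and of the free scalar $k$ appearing in the row with $\Delta_0-\Delta_1=1$ and $\Delta_1=0$. The branch $b_0\neq0$ is comparatively routine.
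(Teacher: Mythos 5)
Your proposal is correct and follows essentially the same route as the paper: reduce the even parts to Proposition~\ref{hv.mod}, extract the functional equations from \eqref{conmod.2} applied to $[L_\lambda G]$ and $[G_\lambda G]$, split on whether the constant $H$-eigenvalue vanishes, and in the vanishing case shift $\partial$ by $a$ to reach \eqref{key.equ} with $c=0$ and invoke Lemma~\ref{lemma.equ.2}. The only cosmetic difference is that you obtain $b_0=b_1$ from $[H_\lambda G]=0$, whereas the paper reads the $H$-eigenvalues off the $[G_\lambda G]=2H$ identities; the substance is identical.
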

\begin{proof}
    Let $M=\cp v_\ep\oplus\cp v_\op$ be a nontrivial free conformal \hvs-module of rank $(1+1)$ with nontrivial $\hvs_\op$-action. Assume that
    \begin{align*}
       \begin{cases}
           L_\lambda v_\ep=f_0(\partial,\lambda)v_\ep,\ L_\lambda v_\op=f_1(\partial,\lambda)v_\op,\\
           H_\lambda v_\ep=g_0(\partial,\lambda)v_\ep,\ H_\lambda v_\op=g_1(\partial,\lambda)v_\op,\\
           G_\lambda v_\ep=h_0(\partial,\lambda)v_\op,\ G_\lambda v_\op=h_1(\partial,\lambda)v_\ep,\\
       \end{cases}
    \end{align*}
    where $f_i(\partial,\lambda),g_i(\partial,\lambda),h_i(\partial,\lambda)(i=0,1)\in\bC[\partial,\lambda]$ and $\big(h_0(\partial,\lambda),h_1(\partial,\lambda)\big)\neq(0,0)$. Up to parity change, we may further assume that $h_0(\partial,\lambda)\neq 0$. From $L_\lambda(G_\mu v_\ep)-G_\mu(L_\lambda v_\ep)=[L_\lambda G]_{\lambda+\mu}v_\ep$, we have
    \begin{align}\label{LGv0}
        f_1(\partial,\lambda)h_0(\partial+\lambda,\mu)-f_0(\partial+\mu,\lambda)h_0(\partial,\mu)=-\mu h_0(\partial,\lambda+\mu).
    \end{align}
    It forces that $f_0(\partial,\lambda)\neq 0$. Otherwise, taking $f_0(\partial,\lambda)=0$ and $\lambda=0$ in \eqref{LGv0}, we obtain $\big(f_1(\partial,0)+\mu\big)h_0(\partial,\mu)=0$, which implies that $h_0(\partial,\mu)=0$. This is a contradiction with our assumption. Since $M_\ep(=\cp v_\ep)$ is the conformal module over $\hv$, by Proposition~\ref{hv.mod}, we can assume $f_0(\partial,\lambda)=\partial+\Delta_0\lambda+a_0$ for some $\Delta_0,a_0\in\bC$. In a similar way, we can deduce that $f_1(\partial,\lambda)=\partial+\Delta_1\lambda+a_1$ for some $\Delta_1,a_1\in\bC$. Now, the equality \eqref{LGv0} becomes $$(\partial+\mu+\Delta_0\lambda+a_0)h_0(\partial,\mu)-(\partial+\Delta_1\lambda+a_1)h_0(\partial+\lambda,\mu)=\mu h_0(\partial,\lambda+\mu).$$ Putting $\lambda=0$ in the above equality, we get $(a_0-a_1)h_0(\partial,\mu)=0$, which implies $a_0=a_1$. We redenote $a_0$ by $a$ and obtain
    \begin{align}\label{LGv0.1}
        (\partial+\mu+\Delta_0\lambda+a)h_0(\partial,\mu)-(\partial+\Delta_1\lambda+a)h_0(\partial+\lambda,\mu)=\mu h_0(\partial,\lambda+\mu).
    \end{align}

    From $[G_\lambda G]=2H$ and \eqref{conmod.2}, it follows that $G_\lambda(G_\mu v_\ep)+G_\mu(G_\lambda v_\ep)=2H_{\lambda+\mu}v_\ep$. Thus we can obtain
    \begin{align}\label{GGv0}
        h_0(\partial+\lambda,\mu)h_1(\partial,\lambda)+h_0(\partial+\mu,\lambda)h_1(\partial,\mu)=2g_0(\partial,\lambda+\mu).
    \end{align}
    Putting $\mu=\lambda$ in \eqref{GGv0}, gives that
    \begin{align}\label{GGv0.1}
        g_0(\partial,2\lambda)=h_0(\partial+\lambda,\lambda)h_1(\partial,\lambda).
    \end{align}
    Similarly, from $G_\lambda(G_\mu v_\op)+G_\mu(G_\lambda v_\op)=2H_{\lambda+\mu}v_\op$, we can get
    \begin{align}\label{GGv1}
        g_1(\partial,2\lambda)=h_0(\partial,\lambda)h_1(\partial+\lambda,\lambda).
    \end{align}
    Given $M_\ep$ is a conformal module over $\hv$, by Proposition~\ref{hv.mod}, we know that $g_0(\partial,\lambda)$ is a constant. Thus both $h_0(\partial,\lambda)$ and $h_1(\partial,\lambda)$ are constants by \eqref{GGv0.1}.

    \afCase{1} $g_0(\partial,\lambda)\neq 0$.

    We can deduce that $h_0(\partial,\lambda)=b$ and $h_1(\partial,\lambda)=c$, where $b,c\in\bC$ and $bc\neq 0$. By \eqref{GGv0.1} and \eqref{GGv1}, we can derive that $g_0(\partial,\lambda)=g_1(\partial,\lambda)=bc$. Then \eqref{LGv0.1} turns into $b(\Delta_1-\Delta_0)\lambda=0$. It follows that $\Delta_0=\Delta_1$. Redenote $\Delta_0$ by $\Delta$. We get the conformal module $M_{\Delta,a,b,c}$ and its $\lambda$-actions.

    \afCase{2} $g_0(\partial,\lambda)=0$.

    We have $h_0(\partial+\lambda,\lambda)h_1(\partial,\lambda)=0$ by \eqref{GGv0.1}. It leads to $h_1(\partial,\lambda)=0$ since $h_0(\partial,\lambda)\neq 0$ under our assumption. Furthermore, we get $g_1(\partial,\lambda)=0$ via \eqref{GGv1}. Let $h(\partial+a,\lambda)=h_0(\partial,\lambda)$ and $\bp=\partial+a$. The equality \eqref{LGv0.1} becomes
    \begin{align*}
        (\bp+\Delta_0\lambda+\mu)h(\bp,\mu)-(\bp+\Delta_1\lambda)h(\bp+\lambda,\mu)=\mu h(\bp,\lambda+\mu).
    \end{align*}
    In fact, the nontrivial solutions of this equation (up to scalar multiples) have been shown in Lemma~\ref{lemma.equ.2}. Therefore we get the conformal modules $M_{\Delta_0,\Delta_1,a}^{(i)}\ (i=1,2,\dots,6)$ and their $\lambda$-actions.
    \end{proof}

    \begin{theorem}\label{thm:conmod.hvsii}
    Up to parity change, any nontrivial free conformal $\hvs(\beta,\gamma,0)$-module of rank $(1+1)$ is isomorphic to one of the followings\textup{:}

    \subno{1} $M_{\Delta_0,\Delta_1,a}^{(i)}=\cp v_\ep\oplus\cp v_\op\ (i=1,2,\dots,10)$ with $\lambda$-actions of $\hvs(\beta,\gamma,0)$\textup{:}
    \begin{align*}
        \begin{cases}
            L_\lambda v_\ep=(\partial+\Delta_0\lambda+a-\gamma)v_\ep,\ L_\lambda v_\op=(\partial+\Delta_1\lambda+a)v_\op,\\
           H_\lambda v_\ep=H_\lambda v_\op=0,\\
           E_\lambda v_\ep=h(\partial,\lambda)v_\op,\ E_\lambda v_\op=0,
        \end{cases}
    \end{align*}
    where $\Delta_0,\Delta_1,a\in\bC$ and $h(\partial,\lambda)\in\bC[\partial,\lambda]$,  $\bp=\partial+a$ and $\bl=\lambda-\gamma$, satisfying the following relations\textup{:}
    \[
    \begin{tabular}{ccccc}
    \toprule
    \text{conformal module} & $\beta$ & $\Delta_0$ & $\Delta_1$ & $h(\partial,\lambda)$\\
    \midrule
    $M_{\Delta_0,\Delta_1,a}^{(1)}$ & $\beta$ & $\Delta_0$ & $\Delta_0+\beta-1$ & $1$\\
    $M_{\Delta_0,\Delta_1,a}^{(2)}$ & $1$ & $\Delta_0$ & $\Delta_0-1$ & $\bl$\\
    $M_{\Delta_0,\Delta_1,a}^{(3)}$ & $1$ & $\Delta_0$ & $\Delta_0-2$ & $\bl(\bp-\Delta_1\bl)$\\
    $M_{\Delta_0,\Delta_1,a}^{(4)}$ & $\beta$ & $k(\beta-1)-\beta+2$ & $k(\beta-1)$ & $\bp+k\bl,k\in\bC$\\
    $M_{\Delta_0,\Delta_1,a}^{(5)}$ & $\beta\neq 1,2$ & $1$ & $\beta-2$ & $(\bp+\bl)(\partial+\frac{\beta-2}{\beta-1}\bl)$\\
    $M_{\Delta_0,\Delta_1,a}^{(6)}$ & $\beta\neq 1$ & $3-\beta$ & $0$ & $\bp(\bp+\frac{1}{\beta-1}\bl)$\\
    $M_{\Delta_0,\Delta_1,a}^{(7)}$ & $\frac{5}{3}$ & $\frac{5}{6}$ & $-\frac{3}{2}$ & $(\bp-\bl)(\bp+\frac{1}{2}\bl)(\bp+2\bl)$\\
    $M_{\Delta_0,\Delta_1,a}^{(8)}$ & $3$ & $1$ & $0$ & $\bp(\bp+\frac{1}{2}\bl)(\bp+\bl)$\\
    $M_{\Delta_0,\Delta_1,a}^{(9)}$ & $1$ & $1$ & $-2$ & $\bl(\bp+\bl)(\bp+2\bl)$\\
    $M_{\Delta_0,\Delta_1,a}^{(10)}$ & $1$ & $3$ & $0$ & $\bl\bp(\bp-\bl)$\\
    \bottomrule
    \end{tabular}
    \]

    \subno{2} $M_{\Delta_0,\Delta_1,a,b}^{(i)}=\cp v_\ep\oplus\cp v_\op\ (i=1,2)$ with $\lambda$-actions of $\hvs(\beta,\gamma,0)$\textup{:}
    \begin{align*}
        \begin{cases}
            L_\lambda v_\ep=(\partial+\Delta_0\lambda+a-\gamma)v_\ep,\ L_\lambda v_\op=(\partial+\Delta_1\lambda+a)v_\op,\\
           H_\lambda v_\ep=bv_0,\ H_\lambda v_\op=bv_\op,\\
           E_\lambda v_\ep=h(\partial,\lambda)v_\op,\ E_\lambda v_\op=0,
        \end{cases}
    \end{align*}
    where $\Delta_0,\Delta_1,a\in\bC$, $b\in\bCx$ and $h(\partial,\lambda)\in\bC[\partial,\lambda]$ satisfying the following relations\textup{:}
    \[
    \begin{tabular}{ccccc}
    \toprule
    \phantom{12}\text{conformal module}\phantom{12} & \phantom{1234}$\beta$\phantom{1234} & \phantom{123}$\Delta_0$\phantom{123} & \phantom{123}$\Delta_1$\phantom{123} & \phantom{12}$h(\partial,\lambda)$\phantom{12}\\
    \midrule
    $M_{\Delta_0,\Delta_1,a,b}^{(1)}$ & $\beta$ & $\Delta_0$ & $\Delta_0+\beta-1$ & $1$\\
    $M_{\Delta_0,\Delta_1,a,b}^{(2)}$ & $1$ & $\Delta_0$ & $\Delta_0-1$ & $\lambda-\gamma$\\
    \bottomrule
    \end{tabular}
    \]
\end{theorem}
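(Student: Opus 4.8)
The plan is to follow the blueprint of the proof of Theorem~\ref{thm:conmod.hvs}, now feeding the parameters $\beta,\gamma$ into the canonical polynomial equation~\eqref{key.equ}. Let $M=\cp v_\ep\oplus\cp v_\op$ be a nontrivial free conformal $\hvs(\beta,\gamma,0)$-module of rank $(1+1)$, and write
\begin{align*}
L_\lambda v_\ep=f_0(\partial,\lambda)v_\ep,\quad L_\lambda v_\op&=f_1(\partial,\lambda)v_\op,\quad H_\lambda v_\ep=g_0(\partial,\lambda)v_\ep,\\
H_\lambda v_\op=g_1(\partial,\lambda)v_\op,\quad E_\lambda v_\ep&=h_0(\partial,\lambda)v_\op,\quad E_\lambda v_\op=h_1(\partial,\lambda)v_\ep.
\end{align*}
Since $M_\ep$ and $M_\op$ are rank-one free conformal modules over the even part $\hv$, Proposition~\ref{hv.mod} gives $f_i(\partial,\lambda)=\partial+\Delta_i\lambda+a_i$ with $\Delta_i,a_i\in\bC$ and $g_i(\partial,\lambda)=b_i\in\bC$ for $i=0,1$. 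Applying \eqref{conmod.2} to $[E_\lambda E]=0$ on $v_\ep$ and setting $\mu=\lambda$ gives $h_0(\partial+\lambda,\lambda)h_1(\partial,\lambda)=0$; since $E$ acts nontrivially by hypothesis, up to parity change I may assume $h_0\neq0$, hence $h_1=0$ and $E_\lambda v_\op=0$. After this, the only instances of the module relation~\eqref{conmod.2} carrying information beyond the $\hv$-module structure are the three obtained by pairing $E$ with $E$, $H$ and $L$ acting on $v_\ep$.

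I would then extract the scalar identities from these three. The $[L_\lambda E]$ relation on $v_\ep$ at $\lambda=0$ forces $a_0=a_1-\gamma$; putting $a:=a_1$ recovers $L_\lambda v_\ep=(\partial+\Delta_0\lambda+a-\gamma)v_\ep$ and $L_\lambda v_\op=(\partial+\Delta_1\lambda+a)v_\op$, as in the statement. The $[H_\lambda E]=0$ relation on $v_\ep$ reads $b_1\,h_0(\partial+\lambda,\mu)=b_0\,h_0(\partial,\mu)$; since $h_0\neq0$ this forces either $b_0=b_1=0$, so $H$ acts trivially (part~(1) of the theorem), or $b_0=b_1=:b\in\bCx$ together with $h_0$ being independent of $\partial$ (part~(2)).

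In part~(1), the $[L_\lambda E]$ relation on $v_\ep$, rewritten in terms of $\bp=\partial+a$, becomes
\begin{align*}
h_0(\bp+\lambda,\mu)(\bp+\Delta_1\lambda)-(\bp+\Delta_0\lambda+\mu-\gamma)h_0(\bp,\mu)=\big((\beta-1)\lambda-\mu+\gamma\big)h_0(\bp,\lambda+\mu),
\end{align*}
and the substitution $h_0(\partial,\lambda)=g(\bp,\bl)$ with $\bl=\lambda-\gamma$ (equivalently, replacing the third variable $\mu$ by $\mu-\gamma$) turns this into precisely equation~\eqref{key.equ} for $g$ with $(a,b,c)=(\Delta_0,\Delta_1,\beta-1)$. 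Lemma~\ref{lemma.equ.1} (when $\beta\neq1$) and Lemma~\ref{lemma.equ.2} (when $\beta=1$) then list all nonzero solutions: reading $a-b+c=\Delta_0-\Delta_1+\beta-1$ (resp.\ $a-b=\Delta_0-\Delta_1$) off each row, rescaling $v_\op$ so that the scalar multiple is $1$, and re-writing $f(x,y)$ as $h_0=f(\bp,\bl)$, one recovers exactly the families $M^{(1)}_{\Delta_0,\Delta_1,a}$ through $M^{(10)}_{\Delta_0,\Delta_1,a}$, the rows $M^{(1)}$ and $M^{(4)}$ each spanning both $\beta=1$ and $\beta\neq1$. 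In part~(2), $h_0=h_0(\lambda)$ and the same relation collapses to $h_0(\mu)\big((\Delta_1-\Delta_0)\lambda+\gamma-\mu\big)=h_0(\lambda+\mu)\big((\beta-1)\lambda+\gamma-\mu\big)$; setting $\mu=0$ gives $h_0(\lambda)\big((\beta-1)\lambda+\gamma\big)=h_0(0)\big((\Delta_1-\Delta_0)\lambda+\gamma\big)$, so for $\beta\neq1$ a divisibility/degree argument forces $h_0$ constant with $\Delta_1=\Delta_0+\beta-1$ (yielding $M^{(1)}_{\Delta_0,\Delta_1,a,b}$), while for $\beta=1$ it only bounds $\deg h_0\le1$, and a short direct substitution shows that either $h_0$ is constant ($M^{(1)}_{\Delta_0,\Delta_1,a,b}$) or $h_0=\lambda-\gamma$ with $\Delta_1=\Delta_0-1$ ($M^{(2)}_{\Delta_0,\Delta_1,a,b}$). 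The converse is routine: the $\hv$-module axioms hold by Proposition~\ref{hv.mod}, the axioms involving $E$ acting on $v_\op$ are vacuous since $E_\lambda v_\op=0$, and those involving $E$ acting on $v_\ep$ hold by construction.

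The step I expect to be most delicate is the reduction in part~(1): checking that the change of variables $\bp=\partial+a$, $\bl=\lambda-\gamma$ (with the accompanying $\gamma$-shift of the third variable) genuinely sends our identity to~\eqref{key.equ}, and then matching all ten rows of the two tables to the ten module families with the correct values of $\Delta_0,\Delta_1$ and the correct restrictions on $\beta$ --- a place where sign slips and off-by-$\gamma$ errors are easy to make. The parity-change reduction to $h_0\neq0$ and the dichotomy on whether $H$ acts trivially are comparatively routine, but still need to be set up carefully.
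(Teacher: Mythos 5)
Your proposal is correct and takes essentially the same route as the paper: determine the even actions via Proposition~\ref{hv.mod}, kill $h_1$ using $[E_\lambda E]=0$, split according to whether $H$ acts trivially, and convert the $[L_\lambda E]$ identity by the shifts $\bp=\partial+a$, $\mu\mapsto\mu-\gamma$ into equation~\eqref{key.equ} with parameters $(\Delta_0,\Delta_1,\beta-1)$, reading the ten families (and the two $\partial$-independent ones in part~(2)) off Lemmas~\ref{lemma.equ.1} and \ref{lemma.equ.2} exactly as the paper does. The only point you gloss that the paper treats (partially) is the nontriviality of the induced $\hv$-module structures before invoking Proposition~\ref{hv.mod} --- the paper rules out $f_0=0$ explicitly via \eqref{LEv0}, and $f_1=0$ is excluded by the same one-line argument --- so this is a cosmetic, not substantive, omission.
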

\begin{proof}
    Similar to the discussion in Theorem~\ref{thm:conmod.hvs}, we can assume $M=\cp v_\ep\oplus\cp v_\op$ is a nontrivial free conformal \hvsii-module of rank $(1+1)$ with nontrivial $\hvsii_\op$-action and
    \begin{align*}
       \begin{cases}
           L_\lambda v_\ep=f_0(\partial,\lambda)v_\ep,\ L_\lambda v_\op=f_1(\partial,\lambda)v_\op,\\
           H_\lambda v_\ep=g_0(\partial,\lambda)v_\ep,\ H_\lambda v_\op=g_1(\partial,\lambda)v_\op,\\
           E_\lambda v_\ep=h_0(\partial,\lambda)v_\op,\ E_\lambda v_\op=h_1(\partial,\lambda)v_\ep,\\
       \end{cases}
    \end{align*}
    where $f_i(\partial,\lambda),g_i(\partial,\lambda),h_i(\partial,\lambda)(i=0,1)\in\bC[\partial,\lambda]$ and $\big(h_0(\partial,\lambda),h_1(\partial,\lambda)\big)\neq(0,0)$. Up to parity change, we may further assume that $h_0(\partial,\lambda)\neq 0$. From $L_\lambda(E_\mu v_\ep)-E_\mu(L_\lambda v_\ep)=[L_\lambda E]_{\lambda+\mu}v_\ep$, we have
    \begin{align}\label{LEv0}
        f_1(\partial,\lambda)h_0(\partial+\lambda,\mu)-f_0(\partial+\mu,\lambda)h_0(\partial,\mu)=\big((\beta-1)\lambda-\mu+\gamma\big)h_0(\partial,\lambda+\mu).
    \end{align}
    It follows that $f_0(\partial,\lambda)\neq 0$. Otherwise, taking $f_0(\partial,\lambda)=0$ and $\lambda=0$ in \eqref{LEv0}, we obtain $\big(f_1(\partial,0)+\mu-\gamma\big)h_0(\partial,\mu)=0$, which implies that $h_0(\partial,\mu)=0$. This contradicts with our assumption. Since $M_\ep(=\cp v_\ep)$ and $M_\op(=\cp v_\op)$ are the conformal modules over $\hv$, by Proposition~\ref{hv.mod}, we can assume $f_0(\partial,\lambda)=\partial+\Delta_0\lambda+a_0$ and $f_1(\partial,\lambda)=\partial+\Delta_1\lambda+a_1$ for some $\Delta_0,\Delta_1,a_0,a_1\in\bC$. Now, the equality \eqref{LEv0} becomes $$(\partial+\Delta_1\lambda+a_1)h_0(\partial+\lambda,\mu)-(\partial+\mu+\Delta_0\lambda+a_0)h_0(\partial,\mu)=\big((\beta-1)\lambda-\mu+\gamma\big)h_0(\partial,\lambda+\mu).$$ Putting $\lambda=0$ in the above equality, we get $(a_1-a_0-\gamma)h_0(\partial,\mu)=0$, which implies $a_0=a_1-\gamma$. For convenience, we redenote $a_1$ by $a$ and let $\bp=\partial+a$, $\bl=\lambda-\gamma$ and $\bar{h}(\partial,\lambda)=h_0(\partial-a,\lambda+\gamma)$. Then $\bar{h}(\bp,\bl)=h_0(\partial,\lambda)$. Further let $\bar{\mu}=\mu-\gamma$. Finally, we can rewrite \eqref{LEv0} as
    \begin{align}\label{LEv0.2}
        (\bp+\Delta_1\bl)\bar{h}(\bp+\bl,\bar{\mu})-(\bp+\bar{\mu}+\Delta_0\bl)\bar{h}(\bp,\bar{\mu})=\big((\beta-1)\bl-\bar{\mu}\big)\bar{h}(\bp,\bl+\bar{\mu}).
    \end{align}
    Applying Lemma~\ref{lemma.equ.1} (if $\beta\neq 1$) and Lemma~\ref{lemma.equ.2} (if $\beta=1$), we can obtain all the nontrivial solutions (up to scalar multiples) of $\bar{h}(\bp,\bl)$($=h_0(\partial,\lambda)$).

    From $E_\lambda(E_\mu v_\ep)+E_\mu(E_\lambda v_\ep)=0$, it follows that
    \begin{align}\label{EEv0}
        h_0(\partial+\lambda,\mu)h_1(\partial,\lambda)+h_0(\partial+\mu,\lambda)h_1(\partial,\mu)=0.
    \end{align}
    Putting $\mu=\lambda$ in \eqref{EEv0}, we have $ h_0(\partial+\lambda,\lambda)h_1(\partial,\lambda)=0$. We can get $h_1(\partial,\lambda)=0$ since $h_0(\partial,\lambda)\neq 0$ under our assumption. Considering $H_\lambda(E_\mu v_\ep)-E_\mu(H_\lambda v_\ep)=[H_\lambda E]_{\lambda+\mu}v_\ep$, we obtain
    \begin{align}\label{HEv0}
        g_1(\partial,\lambda)h_0(\partial+\lambda,\mu)-g_0(\partial+\mu,\lambda)h_0(\partial,\mu)=\tau h_0(\partial,\lambda+\mu).
    \end{align}
  Given $M_\ep$ is a conformal module over $\hv$, by Proposition~\ref{hv.mod}, we have $g_0(\partial,\lambda)=b_0$ for some $b_0\in\bC$. Similarly, we also have $g_1(\partial,\lambda)=b_1$ for some $b_1\in\bC$. Furthermore, by \eqref{HEv0} with $\lambda=0$, we can deduce that $(b_1-b_0-\tau)h_0(\partial,\mu)=0$, which means that $b_0=b_1-\tau$. Redenote $b_1$ by $b$ and we get
  \begin{align}\label{HEv0.1}
      (b-\tau)h_0(\partial+\lambda,\mu)-bh_0(\partial,\mu)=\tau h_0(\partial,\lambda+\mu).
  \end{align}

  Firstly, we claim that $\tau=0$. Otherwise, suppose $\tau\neq 0$. Putting $\lambda=\mu=0$ in \eqref{HEv0.1}, then we obtain $\tau h_0(\partial,0)=0$. It leads to $h_0(\partial,0)=0$. Then taking $\mu=0$ in \eqref{HEv0.1}, we have $(b-\tau)h_0(\partial+\lambda,0)-bh_0(\partial,0)=\tau h_0(\partial,\lambda)$, namely $\tau h_0(\partial,\lambda)=0$. It forces that $ h_0(\partial,\lambda)=0$, which contradicts with our assumption. Thus $\tau=0$, and we can deduce that $b\big(h_0(\partial+\lambda,\mu)-h_0(\partial,\mu)\big)=0$ by \eqref{HEv0.1}. Furthermore, if $b=0$, namely the equality \eqref{HEv0.1} always holds, then all the solutions of $h_0(\partial,\lambda)$ in \eqref{LEv0.2} contribute to the conformal modules. Therefore we get the conformal modules $M_{\Delta_0,\Delta_1,a}^{(i)}\ (i=1,2,\dots,10)$ and their $\lambda$-actions. If $b\neq 0$, then $h_0(\partial+\lambda,\mu)=h_0(\partial,\mu)$, which implies that the value of $h_0(\partial,\lambda)$ is independent of $\partial$. By Lemmas~\ref{lemma.equ.1} and \ref{lemma.equ.2}, we obtain the conformal modules  $M_{\Delta_0,\Delta_1,a,b}^{(i)}\ (i=1,2)$ and their $\lambda$-actions.
\end{proof}

\begin{remark}
    According to the proof of Theorem~\ref{thm:conmod.hvsii}, we can obtain that, for the case of $\tau\neq 0$, $\hvsii$ has no conformal modules with nontrivial $\hvsii_\op$-action. In other words, its modules degenerate to the Heisenberg--Virasoro conformal modules (see Proposition~\ref{hv.mod}).
\end{remark}

\footnotesize

\end{document}